\theoremstyle{theorem}
\newtheorem{Theorem}{Theorem}[section]
\newtheorem{theoremn}{Theorem}
\newtheorem{Lemma}[Theorem]{Lemma}
\newtheorem{Proposition}[Theorem]{Proposition}
\newtheorem{Corollary}[Theorem]{Corollary}
\newtheorem{Conjecture}[Theorem]{Conjecture}
\theoremstyle{definition}
\newtheorem{Definition}[Theorem]{Definition}
\newtheorem{Remark}[Theorem]{Remark}
\newtheorem{Notation}[Theorem]{Notation}
\newtheorem{Construction}[Theorem]{Construction}
\numberwithin{equation}{section}
\DeclareMathOperator{\NE}{NE}
\DeclareMathOperator{\Eff}{Eff}
\DeclareMathOperator{\coker}{coker}
\DeclareMathOperator{\Pic}{Pic}
\DeclareMathOperator{\Bir}{Bir}
\newcommand{\QED}{\ifhmode\unskip\nobreak\fi\quad {\rm Q.E.D.}} 
\newcommand{\Aut}{\operatorname{Aut}}
\newcommand{\quot}{/\hspace{-1.2mm}/}
\begin{document}
\title{On the biregular geometry of the Fulton-MacPherson compactification}

\author[Alex Massarenti]{Alex Massarenti}
\address{\sc Alex Massarenti\\
Universidade Federal Fluminense\\
Rua M\'ario Santos Braga\\
24020-140, Niter\'oi,  Rio de Janeiro\\ Brazil}
\email{alexmassarenti@id.uff.br}

\date{\today}
\subjclass[2010]{Primary 14H10, 14J50; Secondary 14D22, 14D23, 14D06}
\keywords{Fulton-MacPherson compactification, Kontsevich moduli spaces, fibrations, automorphisms, Fano varieties, Mori Dream Spaces}

\maketitle

\begin{abstract}
Let $X[n]$ be the Fulton-MacPherson compactification of the configuration space of $n$ ordered points on a smooth projective variety $X$. We prove that if either $n\neq 2$ or $\dim(X)\geq 2$, then the connected component of the identity of $\Aut(X[n])$ is isomorphic to the connected component of the identity of $\Aut(X)$. When $X = C$ is a curve of genus $g(C)\neq 1$ we classify the dominant morphisms $C[n]\rightarrow C[r]$, and thanks to this we manage to compute the whole automorphism group of $C[n]$, namely $\Aut(C[n])\cong S_n\times \Aut(C)$ for any $n\neq 2$, while $\Aut(C[2])\cong S_2\ltimes (\Aut(C)\times \Aut(C))$. Furthermore, we extend these results on the automorphisms to the case where $X = C_1\times ... \times C_r$ is a product of curves of genus $g(C_i)\geq 2$. Finally, using the techniques developed to deal with Fulton-MacPherson spaces, we study the automorphism groups of some Kontsevich moduli spaces $\overline{M}_{0,n}(\mathbb{P}^N,d)$.  
\end{abstract}

\setcounter{tocdepth}{1}
\tableofcontents

\section*{Introduction}
The search for a natural compactification of the configuration space of $n$ distinct ordered points in a smooth algebraic variety $X$ has been a long-standing problem in algebraic geometry. In \cite{FM} W. Fulton and R. MacPherson constructed such compactification $X[n]$ from the Cartesian product $X^n$ by a sequence of blow-ups. For instance, when $n=2$ the Fulton-MacPherson compactification $X[2]$ is the blow-up of $X\times X$ along the diagonal, which has been the natural candidate since the nineteenth century. By \cite{FM} $X[n]$ is a smooth, irreducible variety.

Since then the compactification $X[n]$ has been widely studied and generalized by means of the theory of wonderful compactifications \cite{DP}, \cite{Li}, \cite{MP}. Indeed, $X[n]$ is the wonderful compactification of the set of all diagonals in $X^n$.

The Fulton-MacPherson compactification $C[n]$ of the configuration space of $n$ ordered points in a smooth projective curve $C$ is closely related to the Deligne-Mumford compactification $\overline{M}_{g,n}$ of the moduli space of smooth curves of genus $g$ with $n$-marked points.

Indeed, by \cite{KM11} $\overline{M}_{0,n}$ is the GIT quotient $\mathbb{P}^1[n]\quot PGL(2)$ with respect to a suitable linearization, and its birational geometry is closely related to the geometry of $\mathbb{P}^1[n]$ \cite[Section 3]{HK}, while if $g(C)\geq 3$ then $C[n]$ appears as the general fiber of the forgetful morphism $\overline{M}_{g,n}\rightarrow \overline{M}_g$. Furthermore, using the moduli spaces of weighted pointed curves introduced by B. Hassett by adding rational weights to the markings in \cite{Has}, Y. H. Kiem and H. B. Moon realized new compactifications of the configuration space of $n$ distinct points on $\mathbb{P}^1$ in \cite{KM11}. 

Moreover, $\mathbb{P}^1[n]$ is related to another important class of moduli spaces, namely the Kontsevich moduli spaces parametrizing stable maps introduced by M. Kontsevich in \cite{Ko}.

These spaces are denoted by $\overline{M}_{g,n}(X,\beta)$ where $X$ is a projective scheme and $\beta\in H_2(X,\mathbb{Z})$ is the homology class of a curve in $X$. A point in $\overline{M}_{g,n}(X,\beta)$ corresponds to a holomorphic map $\alpha$ from an $n$-pointed genus $g$ curve $C$ to $X$ such that $\alpha_{*}([C])=\beta$. If $X$ is a homogeneous variety then there exists a smooth, irreducible Deligne-Mumford stack $\overline{\mathcal{M}}_{0,n}(X,\beta)$ whose coarse moduli space is $\overline{M}_{0,n}(X,\beta)$ \cite{FP}.

We will deal mainly with the case $X = \mathbb{P}^N$, the class $\beta$ is then completely determined by its degree and we will write $\beta = d[L]$, where $[L]$ is the class of a line in $\mathbb{P}^N$. The space $\overline{M}_{0,n}(\mathbb{P}^{N},d)$ admits $n$ evaluation maps $ev_i:\overline{M}_{0,n}(\mathbb{P}^{N},d)\rightarrow \mathbb{P}^N$ associating to a stable map its value on the $i$-th marked point. Furthermore, when $d = N = 1$ the birational morphism $ev_1\times ...\times ev_n:\overline{M}_{0,n}(\mathbb{P}^1,1)\rightarrow(\mathbb{P}^1)^n$ realizes $\overline{M}_{0,n}(\mathbb{P}^1,1)$ as the Fulton-MacPherson compactification $\mathbb{P}^1[n]$.

The Picard group of $\overline{M}_{0,n}(\mathbb{P}^N,d)$ was computed by R. Pandharipande in \cite{Pa99}, and the cones of divisors of $\overline{M}_{0,n}(\mathbb{P}^N,d)$ have been carefully analyzed by I. Coskun, J. Harris and J. Starr in \cite{CHS}. In Section \ref{LinPenc}, thanks to their description of $\Pic(\mathbb{P}^1[n])$, we manage to classify base point free pencils on $\mathbb{P}^1[n]\cong \overline{M}_{0,n}(\mathbb{P}^1,1)$. The first step of our argument consists in analyzing the birational and the projective geometry of $\mathbb{P}^1[3]$. This is a Fano variety obtained by blowing-up the small diagonal in $\mathbb{P}^1\times\mathbb{P}^1\times\mathbb{P}^1$, or equivalently three skew lines in $\mathbb{P}^3$.

In particular, it is a Mori Dream Space so that its effective and nef cones are polyhedral, and we manage to compute its Mori cone. On the other hand, by \cite[Corollary 1.4]{CT15} we know that $\overline{M}_{0,n}$ is not a Mori Dream Space for $n$ big enough. Since as soon as $n\geq 3$ there is a dominant morphism $\rho:\mathbb{P}^1[n]\rightarrow\overline{M}_{0,n}$, forgetting the map, \cite[Theorem 1.1]{Ok16} yields that $\mathbb{P}^1[n]$ is not a Mori Dream Space for $n$ sufficiently big as well.\\
Nevertheless, the analysis of base point free pencils on $\mathbb{P}^1[3]$ will be the first step of an inductive argument that will lead us to the classification of dominant morphisms $\mathbb{P}^1[n]\rightarrow\mathbb{P}^{1}[r]$ in Sections \ref{LinPenc} and \ref{sectionFib}. 

There are several natural morphisms $\mathbb{P}^1[n]\rightarrow\mathbb{P}^1$, namely the evaluation morphisms mentioned above, the forgetful morphisms $\pi_I:\mathbb{P}^1[n]\rightarrow \mathbb{P}^1[1]\cong\mathbb{P}^1$ forgetting the $n-1$ points labeled by the set $I$, and the morphisms $\pi_{J}\circ\rho:\mathbb{P}^1[n]\rightarrow\overline{M}_{0,4}\cong\mathbb{P}^1$, where $\rho:\mathbb{P}^1[n]\rightarrow\overline{M}_{0,n}$ forgets the map, and $\pi_{J}:\overline{M}_{0,n}\rightarrow\overline{M}_{0,4}$ forgets the $n-4$ points labeled by the set $J$. Note that by Remark \ref{forgeval} the evaluation map $ev_i$ may be identified with the forgetful morphism $\pi_{I}$, with $I = \{1,...,n\}\setminus \{i\}$. We call modular base point free pencils the linear systems associated to these morphisms, see Definition \ref{modmor}.\\ 
Furthermore, if $X$ is an arbitrary smooth variety there are forgetful morphisms $\pi_I:X[n]\rightarrow X[r]$ forgetting $n-r$ of the points, these are just the liftings of the projections $X^n\rightarrow X^r$ to the blow-ups.\\ 
The main results on fibrations in Propositions \ref{propfact}, \ref{r=2}, \ref{propfactg2}, Theorem \ref{thfact}, and Corollary \ref{corfactprod} can be summarized with the following statement:

\begin{theoremn}\label{th1}
Let $\psi:\mathbb{P}^1[n]\rightarrow\mathbb{P}^{1}[r_1]\times ...\times\mathbb{P}^{1}[r_k]$ be a dominant morphism. If $r_i\geq 3$ for some $i=1,...,k$ we assume in addition that $\psi$ has connected fibers. Then $\psi$ factors through a product of forgetful morphisms of type $\pi_I$ and $\pi_{J}\circ\rho$. Finally, if $r_i\geq 3$ for any $i = 1,...,k$ then $\psi$ factors through a product of forgetful morphisms of type $\pi_I$ only.\\
Let $C$ be a smooth projective curve of genus $g(C)\geq 2$, and $\psi:C[n]\rightarrow C[r_1]\times ...\times C[r_k]$ be a dominant morphism. As before, if $r_i\geq 3$ for some $i=1,...,k$ we assume in addition that $\psi$ has connected fibers. Then $\psi$ factors through a product of forgetful morphisms.
\end{theoremn}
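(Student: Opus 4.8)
The plan is to reduce a morphism into a product to its single factors, and then to pin down a dominant morphism $\psi\colon\mathbb{P}^1[n]\to\mathbb{P}^1[r]$ onto one factor using the classification of base point free pencils of Section \ref{LinPenc}. First I would note that, $\psi=(\psi_1,\dots,\psi_k)$ being dominant, each projection $\psi_i\colon\mathbb{P}^1[n]\to\mathbb{P}^1[r_i]$ is dominant; moreover, if $\psi$ has connected fibers then so does each $\psi_i$, because $\psi_i^{-1}(y)=\psi^{-1}(\mathrm{pr}_i^{-1}(y))$ is the preimage of a connected subvariety under a proper surjection with connected fibers and normal target, hence connected by Stein factorization. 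Thus it suffices to classify a single dominant $\psi\colon\mathbb{P}^1[n]\to\mathbb{P}^1[r]$, with connected fibers when $r\geq 3$, after which the sought factorization of the original map is just the product of the factorizations of the $\psi_i$.

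The base of the analysis is $r=1$: a dominant morphism $\mathbb{P}^1[n]\to\mathbb{P}^1[1]\cong\mathbb{P}^1$ is exactly a base point free pencil, and by the classification of modular pencils in Section \ref{LinPenc} it is a forgetful morphism $\pi_I$ or a cross-ratio morphism $\pi_J\circ\rho$. The case $r=2$ is immediate from this, since $\mathbb{P}^1[2]\cong\mathbb{P}^1\times\mathbb{P}^1$ and $\psi$ is then a pair of pencils; here cross-ratio factors are genuinely allowed, which is why no connectedness hypothesis is imposed when $r_i\leq 2$.

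For $r\geq 3$ the key structural input is that the product of evaluation maps $b\colon\mathbb{P}^1[r]\to(\mathbb{P}^1)^r$ is the birational blow-down of the boundary. I would compose $\psi$ with each $ev_m$ and classify the resulting pencil $ev_m\circ\psi$ by Section \ref{LinPenc}; the crucial point is that, under the connectedness hypothesis, none of these is of cross-ratio type. For $r=3$ this rests on the explicit projective geometry of $\mathbb{P}^1[3]$ --- its Mori, nef and effective cones --- determined at the start of Section \ref{LinPenc}, while for larger $r$ one propagates the statement by composing with the forgetful morphisms $\mathbb{P}^1[r]\to\mathbb{P}^1[r-1]$, which preserve connectedness and fall under the inductive hypothesis. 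Granting that each $ev_m\circ\psi$ is a forgetful morphism selecting a marked point $a_m$, the indices $a_1,\dots,a_r$ must be distinct, since otherwise two evaluation composites would coincide and $\psi$ could not dominate $\mathbb{P}^1[r]$; hence $b\circ\psi=b\circ\pi_I$ with $I=\{1,\dots,n\}\setminus\{a_1,\dots,a_r\}$, and since $b$ is an isomorphism over the open configuration locus and $\psi$ is dominant, $\psi$ and $\pi_I$ agree on a dense open set, hence everywhere, giving $\psi=\pi_I$ up to the $S_r$-action on the target. I expect the exclusion of the cross-ratio type to be the main obstacle, as it is exactly where the connectedness assumption and the fine geometry of $\mathbb{P}^1[3]$ are indispensable and where small target factors behave differently.

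The curve case $g(C)\geq 2$ runs along the same lines but is simplified by hyperbolicity and needs no pencil classification. Composing $\psi\colon C[n]\to C[r]$ with the blow-down $C[r]\to C^r$ and with the $m$-th projection gives a morphism $C[n]\to C$; since the exceptional divisors of $C[n]\to C^n$ are covered by rational curves while $C$ has none, this morphism contracts them and so, by rigidity, descends to a morphism $C^n\to C$. Every dominant morphism $C^n\to C$ factors through one of the projections $p_a$ followed by an endomorphism of $C$, by the rigidity of maps to a curve of general type; and such an endomorphism, being dominant between curves of the same genus $\geq 2$, is an automorphism by Riemann--Hurwitz, so it has the form $\sigma\circ p_a$ with $\sigma\in\Aut(C)$. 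As above the selected indices are distinct, and the birationality of $C[r]\to C^r$ promotes the resulting equality after blow-down to $\psi=\pi_I$ composed with the product automorphism $\prod_m\sigma_m$ and a permutation; the connectedness hypothesis for $r\geq 3$ again serves to rule out intervening finite covers when lifting back to $C[r]$. The absence of any $\overline{M}_{0,\bullet}$-type target is what removes the cross-ratio alternative, so that only forgetful morphisms occur.
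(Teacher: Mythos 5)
Your overall skeleton does match the paper's: reduction to a single factor by composing with projections (Corollary \ref{corfactprod}), the pencil classification of Proposition \ref{propfact} for $r=1$, the splitting $\mathbb{P}^1[2]\cong\mathbb{P}^1\times\mathbb{P}^1$ for $r=2$, composition with the evaluations and descent through the birational blow-down for $r\geq 3$, and, for the curve case, contraction of the rationally connected fibers of $g_n$ (Lemma \ref{noratcurves}) followed by the factorization of maps $C^n\to C^r$ (Lemma \ref{fibg2}, via \cite[Lemma 3.8]{Ca00}); this last part of your proposal is essentially Proposition \ref{propfactg2}. However, in the $r\geq 3$ case there is a genuine gap exactly where you yourself locate ``the main obstacle'': the claim that, when $\psi:\mathbb{P}^1[n]\to\mathbb{P}^1[3]$ has connected fibers, no composite $ev_m\circ\psi$ can be of cross-ratio type $\pi_J\circ\rho$ is the paper's Lemma \ref{lemmaftot1}, the technical heart of the whole theorem, and you assert it without proof. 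Your pointer to the Mori, nef and effective cones of $\mathbb{P}^1[3]$ is moreover a misattribution: in the paper those cones (Lemmas \ref{MoriCone} and \ref{crem}) are used only to classify pencils on $\mathbb{P}^1[3]$ itself, i.e.\ they feed into Proposition \ref{propfact}; they say nothing about which pencils on the \emph{source} $\mathbb{P}^1[n]$ can occur as $ev_m\circ\psi$. What the paper actually proves is: (i) a fiber-dimension count showing that the induced automorphism $\mu_1\times\mu_2\times\mu_3$ of $(\mathbb{P}^1)^3$ must preserve $\Delta_{1,2,3}$, and (ii) that $(\xi_1\times\xi_2\times\xi_3)^{-1}(\Delta_{1,2,3})=\psi^{-1}(E)$ is the pullback of a Cartier divisor on the smooth variety $\mathbb{P}^1[3]$, hence of pure codimension one, followed by a case-by-case analysis of boundary divisors of $\overline{M}_{0,n}(\mathbb{P}^1,1)$ showing that every combination involving a cross-ratio factor produces a codimension-two component over $\Delta_{1,2,3}$, a contradiction. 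Since your induction on $r$ only propagates the base case $r=3$, this gap infects all of $r\geq 3$.

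A secondary, repairable, inaccuracy: granting that each $ev_m\circ\psi$ factors through $ev_{a_m}$, what you obtain is $b\circ\psi=(\mu_1\times\cdots\times\mu_r)\circ b\circ\pi_I$ with $\mu_m\in PGL(2)$, not $b\circ\psi=b\circ\pi_I$; since $\mu_1\times\cdots\times\mu_r$ need not preserve the diagonals, it a priori induces only a \emph{birational} self-map of $\mathbb{P}^1[r]$, so ``$\psi$ and $\pi_I$ agree on a dense open set, hence everywhere'' does not apply as stated. The correct conclusion is that $\psi$ contracts the general fiber of $\pi_I$ (the image of such a fiber lies in a fiber of $b$ over a point off the diagonals), after which the rigidity lemma \cite[Lemma 1.6]{KM}, exactly as in the proof of Theorem \ref{thfact}, yields $\psi=\nu\circ\pi_I$ for some morphism $\nu$ of $\mathbb{P}^1[r]$; the same care is needed in your curve case, where the paper's route through $g_r\circ\psi$ avoids the issue. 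This step can be fixed along those lines; the missing Lemma \ref{lemmaftot1} cannot be dismissed and must be supplied.
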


Note that for curves of genus one, or more generally for abelian varieties Theorem \ref{th1} does not hold. Indeed, if $A$ is an abelian variety the multiplication map $A\times A\rightarrow A$ does not factor through one of the projections.

In Section \ref{autsect} we study the automorphism groups of $X[n]$ and of some Kontsevich moduli spaces $\overline{M}_{0,n}(\mathbb{P}^N,d)$. In several cases, automorphisms of moduli spaces tend to be modular, in the sense that they can be described in terms of the objects parametrized by the moduli spaces themselves. Questions of this type have been addressed by several authors in a series of papers, see for instance \cite{BM}, \cite{Ma}, \cite{Lin04}, \cite{Lin11}, \cite{Ro71}, \cite{GKM}, \cite{KMc}, \cite{Far} for moduli spaces of pointed curves, \cite{AP16} for the tropical counterpart of \cite{BM}, \cite{BGM} for moduli spaces of vector bundles over a curve, and \cite{BM16} for generalized quot schemes. We confirm this behavior also for Fulton-MacPherson, and for some Kontsevich moduli spaces.

Thanks to a result due to M. Brion \cite{Br} in the algebraic setting, and to A. Blanchard \cite{Bl} in the analytic setting we compute the connected component of the identity of $\Aut(X[n])$. Furthermore, as an application of Theorem \ref{th1} we manage to control the discrete part of $\Aut(C[n])$. The main results on the automorphism groups in Propositions \ref{connFM}, \ref{druel2}, and Theorem \ref{autFMP1} may be summarized as follows:

\begin{theoremn}\label{th2}
Let $X[n]$ be the Fulton-MacPherson compactification of the configuration space of $n$ ordered points on a smooth projective variety $X$. If either $n\neq 2$ or $\dim(X)\geq 2$, then the connected component of the identity of $\Aut(X[n])$ is isomorphic to the connected component of the identity of $\Aut(X)$, that is
$$\Aut^{o}(X[n])\cong \Aut^{o}(X)$$
for any $n$, and if $X = C$ is a curve then $\Aut^{o}(C[2])\cong \Aut^{o}(C)\times \Aut^{o}(C)$.\\
Furthermore, if $X = C$ is a curve with $g(C)\neq 1$ then we have 
$$\Aut(C[n])\cong S_n\times \Aut(C)$$
if $n\neq 2$, while $\Aut(C[2])\cong S_2\ltimes (\Aut(C)\times \Aut(C))$.
\end{theoremn}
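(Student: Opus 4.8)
The plan is to prove Theorem \ref{th2} in two independent pieces: first the connected component $\Aut^o(X[n])$, then the full group $\Aut(C[n])$ in the curve case.

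\textbf{The connected component.}
For the first assertion I would invoke the Brion--Blanchard machinery cited in the excerpt. The key structural input is that $X[n]$ is obtained from $X^n$ by a canonical sequence of blow-ups along loci defined entirely in terms of the diagonals of $X^n$. Because this construction is canonical, the blow-down morphism $\beta\colon X[n]\to X^n$ is $\Aut^o(X[n])$-equivariant: by the theorem of Brion (algebraic) and Blanchard (analytic), any connected algebraic group acting on $X[n]$ descends to an action on the image of any equivariant contraction, and the exceptional loci (the diagonals and their proper transforms) must be preserved by the identity component since they are the only negative/distinguished loci and the action is connected. Thus I first would show $\Aut^o(X[n])$ acts on $X^n$ preserving the stratification by diagonals; next, that the induced action on $X^n$ commutes with the $S_n$-action permuting factors at the level of $\Aut^o$ (a connected group cannot realize a nontrivial permutation), so it preserves each factor up to the big diagonal and hence restricts to an action on each copy of $X$; finally, that $\Aut^o(X^n)\cong \Aut^o(X)$ acting diagonally, using that a connected group preserving all the diagonals $\Delta_{ij}$ must act by the \emph{same} automorphism on every factor. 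This yields $\Aut^o(X[n])\cong\Aut^o(X)$ for $n\neq 2$ or $\dim X\geq 2$. In the excluded case $X=C$, $n=2$, the space $C[2]$ is the blow-up of $C\times C$ along the single diagonal $\Delta$, and preserving $\Delta$ imposes no compatibility between the two factors, so here one gets the full $\Aut^o(C)\times\Aut^o(C)$; I would treat this as a direct special-case computation.

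\textbf{The discrete part, curve case.}
For the full automorphism group of $C[n]$ with $g(C)\neq 1$, the strategy is to exploit the exact sequence
\begin{equation}
1\longrightarrow \Aut^o(C[n])\longrightarrow \Aut(C[n])\longrightarrow \pi_0\big(\Aut(C[n])\big)\longrightarrow 1.
\end{equation}
I already control the kernel. To control the quotient, I would argue that any automorphism $\phi$ of $C[n]$ must permute the family of forgetful fibrations $\pi_I\colon C[n]\to C$. This is exactly where Theorem \ref{th1} enters: an automorphism is in particular a dominant (indeed isomorphic) self-morphism, and composing $\phi$ with a forgetful morphism produces a dominant morphism $C[n]\to C\cong C[1]$, which by the factorization theorem must itself be a forgetful morphism $\pi_{I'}$ (no $\rho$-type pencils occur because $g(C)\geq 2$ rules out the $\mathbb{P}^1$-specific modular pencils, and for $C[1]$ there is nothing to forget onto). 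Hence $\phi$ induces a permutation of the index set $\{1,\dots,n\}$, giving a homomorphism $\Aut(C[n])\to S_n$. I would then check this map is surjective (the symmetric group acts by permuting the marked points, which is manifestly automorphic) and that its restriction to the discrete part is an isomorphism onto $S_n$, so that $\pi_0(\Aut(C[n]))\cong S_n$. Since the $S_n$-action and the $\Aut(C)$-action (acting diagonally, which constitutes $\Aut^o(C)$ together with possibly discrete automorphisms of $C$) commute, the extension splits as a direct product $\Aut(C[n])\cong S_n\times\Aut(C)$. For $n=2$ the identity component is the larger group $\Aut(C)\times\Aut(C)$, the permutation group is $S_2$, and $S_2$ now acts nontrivially by swapping the two factors, producing the semidirect product $S_2\ltimes(\Aut(C)\times\Aut(C))$.

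\textbf{Main obstacle.}
The delicate point is ensuring that an arbitrary automorphism genuinely permutes the forgetful fibrations and nothing more, i.e.\ that no exotic self-map arises. The factorization Theorem \ref{th1} is stated for dominant morphisms and, for targets of the form $C[r]$ with $r\geq 3$, requires the \emph{connected fibers} hypothesis; an isomorphism automatically has connected (indeed one-point) fibers, so this hypothesis is free, but I must take care that composing $\phi$ with the full product fibration $C[n]\to C[1]\times\cdots\times C[1]$ and invoking the connected-fiber clause correctly pins down the permutation without ambiguity, and that the absence of $\rho$-type factors is rigorously justified from $g(C)\geq 2$. I expect verifying that the homomorphism to $S_n$ has image exactly $S_n$ and kernel exactly $\Aut^o$-twisted-by-$\mathrm{Aut}(C)$, together with the splitting of the extension, to be the part requiring the most care rather than any single hard idea.
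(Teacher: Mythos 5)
Your treatment of the identity component follows the paper's own proof of Proposition \ref{connFM} in all essentials: Brion--Blanchard descent along the birational morphism $g_n\colon X[n]\to X^n$, preservation of the small diagonal by the descended automorphism, and the observation that an element of $\Aut^{o}(X)^n$ preserving $\Delta_{1,\dots,n}$ must be diagonal. (Two cosmetic differences: the paper justifies diagonal-preservation by comparing fiber dimensions of $g_n$ over diagonal and off-diagonal points rather than by your appeal to ``distinguished loci''; and in the excluded case $X=C$, $n=2$ the correct reason is that $C[2]\cong C\times C$ because the diagonal is already a Cartier divisor, so no preservation constraint arises at all --- your phrase ``preserving $\Delta$ imposes no compatibility'' is backwards, since preserving the diagonal would force the two factors to agree.) For $g(C)\geq 2$ your route to the discrete part is a genuine alternative to the paper's: Proposition \ref{druel2} instead intersects $\phi$-images of exceptional curves with $K_{X[n]}=g_n^{*}K_{X^n}+\sum_S((s-1)\dim(X)-1)E_S$ and uses nefness of $K_{X^n}$ to force every automorphism to descend to $\Aut_{\Delta}(X^n)$, an argument valid for any $X$ with nef canonical class; your strategy (factorization theorem, homomorphism to $S_n$, kernel computation) can be made to work for $g\geq 2$ and is in fact the strategy the paper itself uses for $\overline{M}_{g,n}$ in Corollary \ref{mgn}.

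However, there is a genuine gap: the theorem asserts $\Aut(C[n])\cong S_n\times\Aut(C)$ for \emph{every} curve with $g(C)\neq 1$, so the case $C=\mathbb{P}^1$, with $\Aut(C)=PGL(2)$, is part of the statement, and your exclusion of the $\rho$-type pencils is justified only by invoking $g(C)\geq 2$. For $\mathbb{P}^1[n]$ that exclusion is simply not available: by Proposition \ref{propfact} a dominant morphism $\mathbb{P}^1[n]\to\mathbb{P}^1$ may genuinely factor through $\pi_{i_1,\dots,i_{n-4}}\circ\rho\colon\mathbb{P}^1[n]\to\overline{M}_{0,4}\cong\mathbb{P}^1$, so ruling this out for $ev_i\circ\phi^{-1}$ requires a further idea, and this is exactly the extra content of the paper's Theorem \ref{autFMP1}: all fibers of $ev_i$, hence of $ev_i\circ\phi^{-1}$, are isomorphic by Lemma \ref{isofib} (homogeneity of $\mathbb{P}^1$), and in particular all irreducible, whereas the fiber of $\pi_{i_1,\dots,i_{n-4}}\circ\rho$ over a boundary point of $\overline{M}_{0,4}$ is a union of $\sum_{k=0}^{n-4}\binom{n-4}{k}$ boundary divisors while its general fiber is irreducible; this incompatibility kills the $\rho$-type case. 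Without this step your homomorphism $\Aut(\mathbb{P}^1[n])\to S_n$ is not even defined, and the genus-zero half of the statement remains unproved. A secondary slip: for $g\geq 2$ the group $\Aut^{o}(C[n])$ is trivial, so $\pi_0(\Aut(C[n]))=\Aut(C[n])\cong S_n\times\Aut(C)$, not $S_n$; what is true, and what your argument actually needs, is that the kernel of $\Aut(C[n])\to S_n$ is the diagonal copy of $\Aut(C)$, which for $g\geq 2$ is a finite group not contained in the identity component.
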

Note that Theorem \ref{th2} does not hold if $C$ has genus one. For instance, in this case by Remark \ref{GL2} the group $GL(2,\mathbb{Z})$ of matrices with integers entries and determinant plus or minus one acts on $C\times C$.\\ 
In Corollary \ref{mgn}, thanks to Theorem \ref{th2}, we get a simple proof of the main result on the automorphisms of $\overline{M}_{g,n}$ in \cite{Ma} when $g\geq 3$. 

Furthermore, in Proposition \ref{druel1} we extend these techniques to the case when $X = C_1\times ...\times C_r$ is a product of curves of genus $g(C_i)\geq 2$, and to some Kontsevich moduli spaces and moduli stacks. The results on products of curves in Lemma \ref{autcp} and Proposition \ref{druel1} can be summarized as follows:

\begin{theoremn}\label{th4}
Let $X = C_1\times ...\times C_r$ be a product of curves with $g(C_i)\geq 2$ for any $i = 1,...,r$, and let $[C_{r_1}],...,[C_{r_k}]$ be the isomorphism classes of curves in $\{C_1,...,C_r\}$, where $r_i$ is the number of curves of class $[C_{r_i}]$. If $n\neq 2$ then
$$\Aut(X[n])\cong S_n\times((S_{r_1}\ltimes \Aut(C_{r_1})^{r_1})\times ...\times (S_{r_k}\ltimes \Aut(C_{r_k})^{r_k}))\cong S_n\times \Aut(X)$$
while if $n = 2$ and $r\geq 2$ we have
$$\Aut(X[2])\cong S_2^r\ltimes ((S_{r_1}\ltimes \Aut(C_{r_1})^{r_1})\times ...\times (S_{r_k}\ltimes \Aut(C_{r_k})^{r_k})) \cong S_2^r\ltimes \Aut(X)$$
Finally, if $n = 2$ and $r=1$ then $X = C_1$, and $\Aut(C_1[2])\cong S_2\ltimes(\Aut(C_1)\times \Aut(C_1))$.
\end{theoremn}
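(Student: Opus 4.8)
The plan is to reduce to a finite computation and then pin down the group via the morphism classification of Theorem \ref{th1}. Since every $C_i$ has $g(C_i)\geq 2$, its automorphism group is finite and $\Aut^{o}(C_i)$ is trivial, so $\Aut^{o}(X)$ is trivial. By Proposition \ref{druel1} (the product-curve extension of Theorem \ref{th2}) this gives $\Aut^{o}(X[n])\cong\Aut^{o}(X)=\{\mathrm{id}\}$ for $n\neq 2$ and for $n=2,\,r\geq 2$, so $\Aut(X[n])$ is discrete; moreover $X[n]$ is birational to the general type variety $X^n$, hence is itself of general type and $\Aut(X[n])$ is finite. It therefore remains to identify this finite group, for which I would record on the source side the decomposition $\Aut(X)\cong(S_{r_1}\ltimes\Aut(C_{r_1})^{r_1})\times\cdots\times(S_{r_k}\ltimes\Aut(C_{r_k})^{r_k})$ of Lemma \ref{autcp}, and construct the expected subgroup of $\Aut(X[n])$. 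For any $n$ the group $S_n$ acts by permuting the $n$ marked points, preserving the set of diagonals in $X^n$ and hence lifting to the blow-up $X[n]$, while $\Aut(X)$ acts diagonally since $g^{\times n}$ preserves every diagonal. For $n\neq 2$ these commute, giving $S_n\times\Aut(X)\hookrightarrow\Aut(X[n])$. For $n=2$ more happens: writing $X\times X=\prod_i(C_i\times C_i)$ and $\Delta_X=\prod_i\Delta_{C_i}$, for each $i$ the partial swap $s_i$ exchanging the two copies of $C_i$ and fixing the other factors preserves $\Delta_X$, hence lifts to $X[2]$; the $s_i$ generate $S_2^r$ whose diagonal element is the global swap, and conjugating $s_i$ by a diagonal automorphism permutes the $s_j$ according to its factor-permutation part, producing $S_2^r\ltimes\Aut(X)$.

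The core of the argument is the reverse inclusion. For $n\neq 2$ I would take an arbitrary $\phi\in\Aut(X[n])$ and compose the $n$ evaluation morphisms $e_i\colon X[n]\to X$ (the projections of the blow-down $p\colon X[n]\to X^n$) with $\phi$. By Theorem \ref{th1} together with its extension to products of curves (Corollary \ref{corfactprod}), each dominant morphism $e_i\circ\phi\colon X[n]\to X[1]=X$ must factor through a forgetful morphism; since the target has dimension $\dim X$ this forgetful morphism is a single evaluation, so $e_i\circ\phi=\sigma_i\circ e_{\tau(i)}$ for some $\sigma_i\in\Aut(X)$ and a permutation $\tau\in S_n$ (bijective because $\phi$ is an isomorphism and the $e_i\circ\phi$ jointly recover the blow-down). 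After composing with $\tau^{-1}$ I may assume $\phi$ fixes all point-labels, whence $\phi$ descends to an automorphism of $X^n$ preserving every diagonal $\Delta_{ab}=\{x_a=x_b\}$. Applying Lemma \ref{autcp} to $X^n$, a label-preserving automorphism preserving all these diagonals must act identically on the $n$ copies, i.e. it is the diagonal image of a single $g\in\Aut(X)$. This yields $\Aut(X[n])\cong S_n\times\Aut(X)$, the product being direct because permuting the points commutes with applying a fixed $g$ to every copy.

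It remains to treat the two $n=2$ cases. When $r=1$ the center $\Delta$ has codimension one in the surface $C_1\times C_1$, so the blow-up is an isomorphism, $C_1[2]=\Bl_{\Delta}(C_1\times C_1)=C_1\times C_1$, and $\Aut(C_1[2])=\Aut(C_1\times C_1)=S_2\ltimes(\Aut(C_1)\times\Aut(C_1))$ by Lemma \ref{autcp}. When $r\geq 2$ the center $\Delta_X$ has codimension $r\geq 2$, and the canonical class satisfies $K_{X[2]}=p^{*}K_{X\times X}+(r-1)E$ with $K_{X\times X}$ ample; the fiber lines of $E\to\Delta_X$ are the only $K_{X[2]}$-negative curves, so $p$ is the canonical extremal contraction. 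Hence every $\phi\in\Aut(X[2])$ preserves $E$ and descends to an automorphism of $X\times X$ preserving $\Delta_X$. Classifying $\{\varphi\in\Aut(X\times X):\varphi(\Delta_X)=\Delta_X\}$ via Lemma \ref{autcp} — the factor-permutations and curve-automorphisms of $\prod_i(C_i\times C_i)$ compatible with $\prod_i\Delta_{C_i}$ — returns exactly the group $S_2^r\ltimes\Aut(X)$ constructed above.

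I expect the main obstacle to be this reverse inclusion: extracting the permutation $\tau$ and proving that a label-preserving automorphism is forced to be diagonal rests entirely on the product-curve form of the morphism classification (Theorem \ref{th1} and Corollary \ref{corfactprod}), and on disentangling the three independent sources of symmetry — permutations of the $n$ points, permutations of isomorphic curve factors inside $X$, and the individual curve automorphisms. The careful bookkeeping of these symmetries, together with the separate canonical-contraction analysis required for the anomalous $n=2$ blow-up, is where the real work lies.
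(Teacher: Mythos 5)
Your overall architecture (descend to $X^n$, use preservation of the diagonals, then apply Lemma \ref{autcp}) is close to the paper's, but the step you place at the core of the $n\neq 2$ case has a genuine gap. You claim, citing Theorem \ref{th1} and Corollary \ref{corfactprod}, that every dominant morphism $X[n]\to X[1]=X$ factors through a single evaluation followed by an automorphism of $X$. Neither cited result applies: Corollary \ref{corfactprod} concerns $\mathbb{P}^1[n]$ and Proposition \ref{propfactg2} concerns $C[n]$ for a \emph{single} curve $C$; the paper contains no classification of dominant morphisms $X[n]\to X$ for $X$ a product of curves. Worse, the classification you need is false as stated: for $X=C_1\times C_2$ and $n\geq 2$, the morphism sending $(x^1,\dots,x^n)\in X^n$ to $(\pi_{C_1}(x^1),\pi_{C_2}(x^2))$, precomposed with $g_n$, is a dominant morphism $X[n]\to X$ that factors through no evaluation $e_j$. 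The correct factorization statement (Lemma \ref{noratcurves} plus Lemma \ref{fibg2}) allows the selected curve factors to come from \emph{different} copies of $X$, and this ``mixing'' is precisely what produces the extra $S_2^r$ when $n=2$. Ruling out mixing for $n\neq 2$ is exactly the content of the diagonal-preservation analysis, which you invoke only \emph{after} assuming the unmixed form $e_i\circ\phi=\sigma_i\circ e_{\tau(i)}$; as written, the argument is circular at its decisive point. The paper avoids this by first proving (Proposition \ref{druel2}, via nefness of $K_{X^n}$ and non-uniruledness of the $\Delta_S$) that every $\phi\in\Aut(X[n])$ descends to some $\overline{\phi}\in\Aut_{\Delta}(X^n)$, with no a priori shape imposed on $\overline{\phi}$, and only then analyzing $\Aut_{\Delta}(X^n)$ inside $\Aut(X^n)$ via Lemma \ref{autcp}: there, preservation of all the $\Delta_{ab}$ for $n\geq 3$ forces the copy-permutations $(\sigma_1,\dots,\sigma_r)\in S_n^r$ to coincide, while for $n=2$ the single diagonal $\Delta_{1,2}$ is preserved by all of $S_2^r$.

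Two further points. In the $n=2$, $r\geq 2$ case, your assertion that ``the fiber lines of $E\to\Delta_X$ are the only $K_{X[2]}$-negative curves'' is false once $r\geq 4$: take $B=C_1\times\{pt\}\subset\Delta_X$ and the section $C\subset E$ over $B$ determined by the line subbundle $T_{C_1}|_B\subset T_X|_B\cong N_{\Delta_X/X\times X}|_B$; then $E\cdot C=2-2g(C_1)$, so $K_{X[2]}\cdot C=2K_X\cdot B+(r-1)E\cdot C=(3-r)(2g(C_1)-2)<0$. What is true, and suffices, is that every \emph{rational} curve of $X[2]$ lies in a fiber of $E\to\Delta_X$ (since $X^2$ contains no rational curves), so the fiber-line class spans the unique $K$-negative extremal ray and every automorphism preserves its contraction $p$; alternatively one can simply quote Proposition \ref{druel2}, which handles all $n$ at once. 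Finally, your opening reduction cites Proposition \ref{druel1} --- the very statement being proved --- for $\Aut^{o}(X[n])\cong\Aut^{o}(X)$; the correct reference is Proposition \ref{connFM}, though that paragraph (discreteness and finiteness of the group) is not actually needed anywhere in the argument.
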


In the following we summarize the results in this direction for Kontsevich spaces in Propositions \ref{propGrass}, \ref{connRNC}, and Corollary \ref{corRNC}.

\begin{theoremn}\label{th3}
Let $\overline{\mathcal{M}}_{0,n}(\mathbb{P}^N,d)$ be the Deligne-Mumford stack of stable maps of degree $d$ from an $n$-pointed rational curve to $\mathbb{P}^N$, and let $\overline{M}_{0,n}(\mathbb{P}^N,d)$ be its coarse moduli space. If $N\geq 2$ and $n\geq 1$ then
$$\Aut^{o}(\overline{M}_{0,n}(\mathbb{P}^N,1))\cong PGL(2)\times PGL(N+1)$$
for any $n\neq 2$, and $\Aut^{o}(\overline{M}_{0,2}(\mathbb{P}^N,1))\cong PGL(2)\times PGL(2)\times PGL(N+1)$.\\
Furthermore, for the Kontsevich spaces parametrizing rational normal curves we have:
$$\Aut^{o}(\overline{M}_{0,k}(\mathbb{P}^n,n))\cong\Aut^{o}(\overline{\mathcal{M}}_{0,k}(\mathbb{P}^n,n))\cong PGL(n+1)$$
for any $n\geq 3$, and $k\geq n+2$. 
\end{theoremn}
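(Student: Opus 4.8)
The plan is to treat both families of moduli spaces through a single mechanism: realize each as a fibration, via an ``image-curve'' or forgetful morphism, over a base on which $PGL$ acts transitively, with fibres isomorphic to Fulton--MacPherson spaces $\mathbb{P}^1[m]$, and then apply the Brion--Blanchard descent principle already used in the paper to reduce the computation of $\Aut^{o}$ to the base together with a fibrewise kernel. For $\overline{M}_{0,n}(\mathbb{P}^N,1)$ the relevant morphism is $p\colon\overline{M}_{0,n}(\mathbb{P}^N,1)\to\mathbb{G}(1,N)$ sending a degree-one stable map to the line supporting its image; since a degree-one map to a fixed line $L$ is a point of $\overline{M}_{0,n}(L,1)\cong\mathbb{P}^1[n]$, the fibres of $p$ are copies of $\mathbb{P}^1[n]$, and being smooth, rational and connected they give $p_{*}\mathcal{O}=\mathcal{O}_{\mathbb{G}(1,N)}$. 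The group $PGL(N+1)$ acts by post-composition, compatibly with the evaluation maps $ev_i$ and covering the standard action on $\mathbb{G}(1,N)$, whose connected automorphism group is $PGL(N+1)$ for $N\geq 2$.

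By Brion--Blanchard any connected group acting on the total space descends along $p$, so there is an exact sequence
$$1\longrightarrow K\longrightarrow \Aut^{o}(\overline{M}_{0,n}(\mathbb{P}^N,1))\longrightarrow PGL(N+1),$$
the last arrow surjective thanks to the target action and $K$ the subgroup of fibrewise automorphisms. An element of $K$ restricts on each fibre to an automorphism of $\mathbb{P}^1[n]$, which by Theorem \ref{th2} lies in $\Aut^{o}(\mathbb{P}^1[n])=PGL(2)$ for $n\neq 2$, and in $PGL(2)\times PGL(2)$ when $n=2$ because $\mathbb{P}^1[2]\cong\mathbb{P}^1\times\mathbb{P}^1$. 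The decisive and most delicate step is therefore to decide how these fibrewise $PGL(2)$'s, realised through the tautological action on the universal line $\mathbb{P}(S)\to\mathbb{G}(1,N)$, assemble globally: whether they are already accounted for by the stabiliser (a parabolic of $PGL(N+1)$) acting on each line, or whether they survive as a genuinely new factor giving the direct product in the statement. I expect this global assembly of the relative automorphism group scheme over $\mathbb{G}(1,N)$ --- equivalently, the computation of its global sections --- to be the main obstacle, and the case $n=2$ to require separate attention because the fibre then carries the larger group $PGL(2)^2$.

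For the rational normal curve spaces I would run the same argument over a different base, using the forgetful morphism $\overline{M}_{0,k}(\mathbb{P}^n,n)\to\overline{M}_{0,0}(\mathbb{P}^n,n)$, whose general point parametrises an embedded degree-$n$ rational normal curve $R$. Here $PGL(n+1)$ acts transitively on such curves with stabiliser the image of the embedding $PGL(2)\hookrightarrow PGL(n+1)$ induced by $\mathrm{Sym}^{n}$, and the fibres over the smooth locus are again configuration spaces $\mathbb{P}^1[k]$. Descending $\Aut^{o}$ as before, the crucial structural point --- and the feature that distinguishes this case from the previous one --- is that the fibrewise group $\Aut^{o}(\mathbb{P}^1[k])=PGL(2)$ coincides with $\Aut(R)$, which is exactly the stabiliser $PGL(2)\subset PGL(n+1)$; hence the fibrewise automorphisms are already realised by the target action and contribute no new factor, yielding $\Aut^{o}=PGL(n+1)$. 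The hypotheses $n\geq 3$ and $k\geq n+2$ enter to ensure that the general map is an embedding onto a genuine rational normal curve and that there are enough marked points to rigidify the configuration, ruling out accidental automorphisms.

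Finally, to pass from the coarse space to the Deligne--Mumford stack $\overline{\mathcal{M}}_{0,k}(\mathbb{P}^n,n)$ I would use the natural morphism to its coarse moduli space, which induces $\Aut^{o}(\overline{\mathcal{M}}_{0,k}(\mathbb{P}^n,n))\to\Aut^{o}(\overline{M}_{0,k}(\mathbb{P}^n,n))$; in the given range a general stable map has no nontrivial automorphisms, so the generic isotropy is trivial and the connected automorphism groups of the stack and of its coarse space agree, both equal to $PGL(n+1)$. Across all parts, the step I expect to be hardest is the precise determination of the fibrewise kernel $K$: proving that the relative automorphisms over the Grassmannian, respectively over the space of rational normal curves, contribute exactly the asserted factor and combine with the base automorphisms in the stated way. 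This is where the geometry of the universal line, the marked-point configuration, and the dichotomy $n=2$ versus $n\neq 2$ must be controlled.
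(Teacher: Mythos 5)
Your proposal is a plan rather than a proof: for both families you set up the correct fibration and the Brion--Blanchard descent, but the steps you yourself single out as decisive are left open, and these are exactly where the content lies. For $\overline{M}_{0,n}(\mathbb{P}^N,1)$ the paper closes the gap you flag by \emph{constructing} global actions (\ref{actiondeg1}) and (\ref{actiondeg1n2}) of $PGL(2)$ (resp.\ $PGL(2)\times PGL(2)$ when $n=2$) on $\overline{M}_{0,n}(\mathbb{P}^N,1)$, asserting that they commute with the $PGL(N+1)$-action and split the descent sequence (Proposition \ref{propGrass}). For the rational normal curve spaces the paper's route is genuinely different from yours: instead of fibering over $\overline{M}_{0,0}(\mathbb{P}^n,n)$ --- whose connected automorphism group you would also have to compute, and which is not available a priori --- it uses Kapranov's theorem \cite{Ka} to fiber $\overline{M}_{0,n+2}(\mathbb{P}^n,n)$ over a product of projective spaces with general fiber $\overline{M}_{0,n+2}$; since $\Aut(\overline{M}_{0,n+2})\cong S_{n+2}$ is finite \cite{BM}, connectedness kills the kernel with no relative group-scheme analysis at all, a diagonal-preservation argument identifies the image with $PGL(n+1)$, and Lemma \ref{induction} handles $k>n+2$; the stack comparison via \cite{FaM} is the same as yours. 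Note also that your remark that the stabilizer $PGL(2)\subset PGL(n+1)$ of a fixed rational normal curve ``already realises'' the fibrewise automorphisms is not an argument: elements of that stabilizer do not act fibrewise on the family, and what must be proved is precisely that the relative automorphism group scheme has no nontrivial global sections.

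More importantly, the obstacle you isolate in the first part is not a technicality, and resolving it honestly does not give the asserted answer. The kernel $K$ consists of sections over $\mathbb{G}(1,N)$ of the relative automorphism group scheme of $p$, whose identity component is $PGL(\mathcal{S})$, with $\mathcal{S}$ the tautological rank-two subbundle: indeed $\overline{M}_{0,n}(\mathbb{P}^N,1)$ is the relative Fulton--MacPherson compactification of the universal line $\mathbb{P}(\mathcal{S})\rightarrow\mathbb{G}(1,N)$, and a fibrewise automorphism descends along the relative blow-down to a diagonal fibrewise automorphism of the $n$-fold fibre product of $\mathbb{P}(\mathcal{S})$, hence to an automorphism of $\mathbb{P}(\mathcal{S})$ over $\mathbb{G}(1,N)$. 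Any such automorphism comes from an isomorphism $\mathcal{S}\rightarrow\mathcal{S}\otimes M$; taking determinants and using that $\Pic(\mathbb{G}(1,N))\cong\mathbb{Z}$ is torsion-free forces $M\cong\mathcal{O}$, and $\mathcal{S}$ is simple, so $K$ is trivial: the fibrewise $PGL(2)$'s do \emph{not} assemble into a global factor. This is visible already at $n=1$, where $\overline{M}_{0,1}(\mathbb{P}^N,1)$ is the point-line incidence variety, a (non-exceptional) flag variety of $PGL(N+1)$, whose connected automorphism group is $PGL(N+1)$ and not $PGL(2)\times PGL(N+1)$. Correspondingly, the action (\ref{actiondeg1}) invoked in the paper is not well defined when $N\geq 2$: ``acting with $\nu$ on $C_1$'' requires an identification $C_1\cong\mathbb{P}^1$, which is unique only up to conjugation, and different choices give non-isomorphic stable maps (when $N=1$ the map $\alpha_{|C_1}$ provides a canonical identification, which is why Theorem \ref{autFMP1} is unaffected). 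So your instinct that this assembly problem is the crux was correct, but its resolution yields $\Aut^{o}(\overline{M}_{0,n}(\mathbb{P}^N,1))\cong PGL(N+1)$; the first displayed isomorphism of the statement, and the paper's own Proposition \ref{propGrass}, cannot be correct as stated, and any completed proof along your lines would have to confront this rather than reproduce the claimed product.
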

In this paper we work over an algebraically closed field of characteristic zero. However, by \cite[Lemma 1.2]{FaM} the results in Theorems \ref{th2}, \ref{th4} and \ref{th3} can be easily extended to any field, not necessarily an algebraically closed one, of characteristic zero. Finally, in Conjecture \ref{conj} we propose a conjecture on $\Aut(X[n])$ when $X$ is of general type.

\subsection*{Acknowledgments}
I thank St\'ephane Druel for his useful comments and suggestions, particularly about Propositions \ref{druel2} and \ref{druel1}, that helped me to improve a preliminary version of the paper.

Furthermore, I would like to thank the referees for the careful reading that helped me to improve the exposition, in particular one of them for pointing out some inaccuracies in the proof of Proposition \ref{propfact} and for suggesting me how to fix them.

The author is a member of the Gruppo Nazionale per le Strutture Algebriche, Geometriche e le loro Applicazioni of the Istituto Nazionale di Alta Matematica "F. Severi" (GNSAGA-INDAM).

\section{Fulton-MacPherson compactification and Kontsevich moduli spaces}\label{FMK}
All through the paper we work over an algebraically closed field of characteristic zero. In \cite{FM} W. Fulton and R. MacPherson constructed a natural compactification of the configuration space of $n$ distinct ordered points in a smooth algebraic variety $X$. The configuration space
$$\mathcal{C}(X,n) = X^{n}\setminus \bigcup_{1\leq i,j\leq n}\Delta_{i,j}$$
is the complement in the Cartesian product $X^n$ of the diagonals $\Delta_{i,j} = \{(x_1,...,x_n)\in X^n \: |\: x_i =x_j\}$. The Fulton-MacPherson compactification $X[n]$ of $\mathcal{C}(X,n)$ can be realized from $X^n$ via a sequence of blow-ups along smooth and irreducible centers, \cite[Section 3]{FM}.

\subsection{Fulton-MacPherson blow-up construction of $X[n]$}\label{FMConst} 
Let us recall the construction of $X[n]$ given in \cite[Section 3]{FM}. If $n = 1$ then $X[1]=X$, and if $n = 2$ we have that $X[2]$ is the blow-up $\pi_2:X[2]\rightarrow X^2$ of $X^2$ along $\Delta_{1,2}$. Now, assuming that $\pi_{n-1}:X[n-1]\rightarrow X^{n-1}$ has already been constructed, $X[n]$ may be realized in the following way.
\begin{Construction}
For any $S = \{i_1,...,i_s\}\subset\{1,...,n-1\}$ let $\Delta_S = \{(x_1,...,x_{n-1})\in X^{n-1}\:|\: x_{i_1} = ... = x_{i_s}\}$, and let $E_S\subset X[n-1]$ be the exceptional divisor over $\Delta_S$. Finally, we denote by $p_i:X^{n-1}\rightarrow X$ the projections.
\begin{itemize}
\item[-] Let $\widetilde{E}_{1,...,n-1} = \{(y,x)\in X[n-1]\times X\: |\: (p_i\circ\pi_{n-1})(y) = x\: \forall\: i = 1,...,n-1 \}\subset X[n-1]\times X$. Let $X[n-1]_1$ be the blow-up of $X[n-1]\times X$ along $\widetilde{E}_{1,...,n-1}$.
\item[-] For any $S\subset\{1,...,n-1\}$ with $|S| = n-2$, let 
$$\widetilde{E}_{S} = \{(y,x)\in X[n]\times X\: |\: (p_i\circ\pi_{n-1})(y) = x \: \forall\: i\in S\}\subset X[n-1]\times X$$
Note that since $\widetilde{E}_{1,...,n-1}$ has been blown-up in the preceding step the strict transforms in $X[n-1]_1$ of the $\widetilde{E}_{S}$'s do not intersect. Let $X[n-1]_2$ be the blow-up of $X[n-1]_1$ along the strict transforms of the $\widetilde{E}_{S}$'s. Note that the image of the exceptional divisor over the strict transform of $\widetilde{E}_{S}$ via the projection $X[n-1]_2\rightarrow X^{n}$ is the diagonal $\Delta_{S\cup \{n\}}$.
\item[-] We repeat recursively the construction in the preceding step for any subset $S\subset\{1,...,n-1\}$ with $2\leq |S|\leq n-3$ in order of decreasing cardinality of the set $S$, and we denote by $X[n-1]_u$ the variety obtained by this sequence of blow-ups.
\item[-] For any $i = 1,...,n-1$ we consider
$$\widetilde{X[n-1]}_i = \{(y,x)\in X[n-1]\times X \: |\: (p_i\circ \pi_{n-1})(y) = x\}\subset X[n-1]\times X$$
The projection $X[n-1]\times X\rightarrow X^{n}$ maps $\widetilde{X[n]}_i$ onto the diagonal $\Delta_{i,n}$. In order to get $X[n]$ we blow-up $X[n-1]_u$ along the strict transforms of the $\widetilde{X[n-1]}_i$'s.
\end{itemize}
Finally, we denote by $f_n:X[n]\rightarrow X^n$ the composition of these blow-ups.
\end{Construction}

\begin{Lemma}\label{picnum}
The Picard number of $X[n]$ is given by $\rho(X[n]) = \rho(X^n)+2^n-n-1$ if $\dim(X)\geq 2$, and by $\rho(C[n]) = \rho(C^n)+2^n-\frac{n}{2}(n+1)-1$ if $X = C$ is a curve.
\end{Lemma}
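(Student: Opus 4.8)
The plan is to compute the Picard number by tracking how each blow-up in the Fulton-MacPherson construction raises the Picard number, and then count the centers carefully.

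\medskip

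First I would recall that blowing up a smooth irreducible subvariety of codimension at least $2$ in a smooth variety adds exactly one to the Picard number, contributing the class of the new exceptional divisor. Since $X[n]$ is obtained from $X^n$ by a sequence of blow-ups along smooth irreducible centers (as recalled in Construction~\ref{FMConst} following \cite[Section 3]{FM}), we get
\begin{equation*}
\rho(X[n]) = \rho(X^n) + (\text{number of blow-ups in } f_n\colon X[n]\to X^n).
\end{equation*}
Each exceptional divisor of $f_n$ maps onto a diagonal $\Delta_S = \{x_{i_1}=\cdots=x_{i_s}\}$ for some subset $S\subseteq\{1,\dots,n\}$, and conversely the construction produces exactly one exceptional divisor $E_S$ for each such subset $S$ that actually gets blown up. So the heart of the computation is to determine which subsets $S$ contribute an exceptional divisor.

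\medskip

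The key geometric point distinguishes the two cases. When $\dim(X)\ge 2$, every diagonal $\Delta_S$ with $|S|\ge 2$ has codimension $(|S|-1)\dim(X)\ge 2$ in $X^n$, so each is a genuine blow-up center and contributes one exceptional divisor. The number of subsets $S\subseteq\{1,\dots,n\}$ with $|S|\ge 2$ is $2^n - \binom{n}{1} - \binom{n}{0} = 2^n - n - 1$, which gives the first formula. When $X=C$ is a curve, however, the diagonals $\Delta_{i,j}$ of cardinality-two index sets have codimension $1$ in $C^n$; these are already divisors and do \emph{not} raise the Picard number when "blown up" (blowing up a smooth divisor is an isomorphism). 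Hence I would subtract the contribution of all two-element subsets, of which there are $\binom{n}{2} = \frac{n(n-1)}{2}$, from the count $2^n-n-1$. This yields
\begin{equation*}
2^n - n - 1 - \binom{n}{2} = 2^n - n - 1 - \frac{n(n-1)}{2} = 2^n - \frac{n}{2}(n+1) - 1,
\end{equation*}
matching the stated formula $\rho(C[n]) = \rho(C^n) + 2^n - \frac{n}{2}(n+1) - 1$.

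\medskip

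The main obstacle I anticipate is not the counting itself but justifying rigorously that the centers appearing in Construction~\ref{FMConst} are in bijection with subsets $S$ of cardinality at least $2$ (at least $3$, in the curve case, after discarding the codimension-one diagonals), and that each is smooth and irreducible so that each blow-up adds exactly one to $\rho$. The recursive structure of the construction—where at stage $n$ one blows up strict transforms of the loci $\widetilde{E}_S$ and $\widetilde{X[n-1]}_i$ inside $X[n-1]\times X$—makes it important to confirm that no center is repeated or inadvertently omitted across the inductive passage from $X[n-1]$ to $X[n]$. I would handle this by an induction on $n$: assuming the formula holds for $X[n-1]$, I would count the new exceptional divisors introduced in passing to $X[n]$ (these correspond exactly to subsets $S\subseteq\{1,\dots,n\}$ containing the index $n$ with $|S|\ge 2$, resp.\ $|S|\ge 3$ in the curve case), check it agrees with the difference of the closed-form expressions, and invoke \cite[Section 3]{FM} for smoothness and irreducibility of the centers. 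The arithmetic identity $\binom{n}{\ge 2,\, n\in S} = 2^{n-1}-1$ for the general case, and the analogous subtraction of $\binom{n-1}{1}$ two-element subsets containing $n$ in the curve case, closes the induction.
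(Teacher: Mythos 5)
Your proposal is correct and follows essentially the same route as the paper: both count the blow-ups in the Fulton--MacPherson construction by induction on $n$ (with $2^{n-1}-1$ new centers, one for each subset $S\ni n$ with $|S|\geq 2$, at the inductive step), use that each blow-up along a smooth irreducible center of codimension at least two adds one to the Picard number, and in the curve case discard the $\binom{n}{2}$ centers that are Cartier divisors, since blowing them up is an isomorphism. The arithmetic $2^n-n-1$ and $2^n-n-1-\binom{n}{2}=2^n-\frac{n}{2}(n+1)-1$ matches the paper exactly.
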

\begin{proof}
It is enough to prove that $X[n]$ is obtained from $X^n$ by a sequence of $2^n-n-1$ blow-ups. This is clear for $n = 1$. We proceed by induction on $n$. By induction hypothesis $X[n-1]\times X$ is obtained from $X^n = X^{n-1}\times X$ via a sequence of $2^{n-1}-n$ blow-ups. To conclude, it is enough to observe that by Construction \ref{FMConst} $X[n]$ is obtained from $X[n-1]\times X$ by a sequence of 
$$\sum_{k=1}^{n-1}\binom{n-1}{k} = \sum_{k=0}^{n-1}\binom{n-1}{k}-1 = 2^{n-1}-1$$  
blow-ups. If $X = C$ is a curve the last $\binom{n}{2}$ blow-ups are blow-ups of Cartier divisors, therefore they do not modify the variety, and in particular they do not contribute to the Picard number.
\end{proof}

A symmetric construction of $X[n]$ has been realized by several authors \cite{DP}, \cite{Li}, \cite{MP} as follows.

\begin{Proposition}\label{sym}
Let us consider the following sequence of blow-ups:
\begin{itemize}
\item[-] $X[n]_1$ is the blow-up of $X[n]$ along $\Delta_{1,...,n}$;
\item[-] $X[n]_2$ is the blow-up of $X[n]_1$ along the strict transforms of the diagonals $\Delta_S$ with $|S| = n-1$;\\
\vdots
\item[-] $X[n]_i$ is the blow-up of $X[n]_{i-1}$ along the strict transforms of the diagonals $\Delta_S$ with $|S| = n-i+1$;\\
\vdots
\item[-] $X[n]_{n-1}$ is the blow-up of $X[n]_{n-2}$ along the strict transforms of the diagonals $\Delta_S$ with $|S| = 2$.
\end{itemize}
Let $g_n:X[n]_{n-1}\rightarrow X^n$ be the birational morphism given by the above sequence of blow-ups. Then $X[n]_{n-1}$ is a smooth variety isomorphic to $X[n]$.
\end{Proposition}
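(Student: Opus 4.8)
The plan is to recognize both blow-up sequences as instances of the wonderful-compactification construction attached to a single arrangement of subvarieties, and then to invoke the order-independence of that construction. Concretely, I would first fix the arrangement $\mathcal{S} = \{\Delta_S \mid S\subseteq\{1,\dots,n\},\ |S|\geq 2\}$ of diagonals inside the smooth variety $X^n$. The partial order by inclusion is the reverse of the partial order by cardinality: $\Delta_S\subseteq\Delta_T$ precisely when $T\subseteq S$, so the smallest (deepest) stratum is the small diagonal $\Delta_{1,\dots,n}$ while the largest are the big diagonals $\Delta_{i,j}$. I would then check that $\mathcal{S}$ is a building set in the sense of \cite{DP}, \cite{Li}, \cite{MP}: each $\Delta_S$ is smooth, the intersection $\Delta_S\cap\Delta_T$ equals $\Delta_{S\cup T}$ when $S\cap T\neq\emptyset$ and is otherwise a clean (transverse) intersection of the corresponding diagonals, and the minimal building set of connected diagonals consists exactly of the $\Delta_S$.

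Next I would apply the main structural theorem for wonderful compactifications of an arrangement: blowing up the members of a building set in any \emph{admissible} order --- that is, any linear extension of the inclusion poset in which a center is blown up before every center strictly containing it --- produces a smooth variety, the wonderful compactification $X_{\mathcal{S}}$, and this variety together with its projection to $X^n$ is independent of the chosen admissible order. The symmetric sequence of Proposition \ref{sym} blows up the $\Delta_S$ in order of decreasing $|S|$, hence in order of increasing dimension of the (strict transforms of the) centers; since $\Delta_S\subseteq\Delta_T\iff T\subseteq S$, this is a linear extension of the reverse-inclusion order and is therefore admissible. Consequently every successive center remains smooth, all the blow-ups are along smooth centers, and $X[n]_{n-1}\cong X_{\mathcal{S}}$ is smooth, which already gives the asserted smoothness.

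It then remains to identify the Fulton-MacPherson space $X[n]$ of Construction \ref{FMConst} with the same wonderful compactification $X_{\mathcal{S}}$. For this I would use that, by \cite{FM} and its reinterpretation in \cite{DP}, \cite{Li}, \cite{MP}, the inductive construction (adding one point at a time and blowing up inside $X[n-1]\times X$) realizes another admissible resolution of the diagonal arrangement, so that $X[n]\cong X_{\mathcal{S}}$ over $X^n$; combining this with the previous paragraph yields $X[n]_{n-1}\cong X[n]$. The curve case requires no separate argument: when $\dim(X)=1$ the diagonals $\Delta_S$ with $|S|=2$ are Cartier divisors, so the final round of blow-ups in the symmetric sequence is a composition of isomorphisms, in agreement with Lemma \ref{picnum}.

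I expect the main obstacle to be the verification that $\mathcal{S}$ satisfies the building-set axioms together with the fact that both the symmetric and the inductive sequences are genuinely admissible orderings for it --- in particular, checking that the inductive construction, which proceeds in the product $X[n-1]\times X$ rather than directly in $X^n$, corresponds to a legitimate admissible order on the diagonal arrangement. Once the arrangement is set up correctly, the smoothness of $X[n]_{n-1}$ and the order-independence isomorphism follow immediately from the cited wonderful-compactification theory.
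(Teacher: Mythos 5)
Your proposal is correct and takes essentially the same route as the paper: the paper disposes of Proposition \ref{sym} precisely by invoking \cite[Theorem 1.3]{Li} (order-independence of the iterated blow-up of a building set along any inclusion-compatible order) together with \cite[Section 4.2]{Li}, which identifies the Fulton--MacPherson space $X[n]$ with the wonderful compactification of the building set of diagonals $\Delta_S$, $|S|\geq 2$, in $X^n$. The verifications you outline --- the building-set axioms for the diagonal arrangement, the admissibility of the increasing-dimension order, and the matching of Construction \ref{FMConst} with the wonderful compactification --- are exactly the content of those citations.
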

Indeed, Proposition \ref{sym} follows from \cite[Theorem 1.3]{Li}. The reader may also see \cite[Section 4.2]{Li} for further details.

\subsection{Kontsevich moduli spaces of stable maps to homogeneous varieties}
An $n$-pointed rational pre-stable curve $(C,(x_{1},...,x_{n}))$ is a projective, connected, reduced rational curve with at most nodal singularities of arithmetic genus zero, with $n$ distinct and smooth marked points $x_1,...,x_n\in C$. We will refer to the marked and the singular points of $C$ as special points.\\
Let $X$ be a homogeneous variety. A map $(C,(x_{1},...,x_{n}),\alpha)$, where $\alpha:C\rightarrow X$ is a morphism from an $n$-pointed rational pre-stable curve to $X$ is stable if any component $E\cong\mathbb{P}^{1}$ of $C$ contracted by $\alpha$ contains at least three special points.\\
Now, let us fix a class $\beta\in H_2(X,\mathbb{Z})$. By \cite[Theorem 2]{FP} there exists a smooth, proper, and separated Deligne-Mumford stack $\overline{\mathcal{M}}_{0,n}(X,\beta)$ parametrizing isomorphism classes of stable maps $[C,(x_{1},...,x_{n}),\alpha]$ such that $\alpha_{*}[C] = \beta$.\\
Furthermore, by \cite[Corollary 1]{KP} the coarse moduli space $\overline{M}_{0,n}(X,\beta)$ associated to the stack $\overline{\mathcal{M}}_{0,n}(X,\beta)$ is a normal, irreducible, projective variety with at most finite quotient singularities of dimension
$$
\dim(\overline{M}_{0,n}(X,\beta)) = \dim(X)+\int_{\beta}c_1(T_X)+n-3
$$
The variety $\overline{M}_{0,n}(X,\beta)$ is called the \textit{moduli space of stable maps}, or the \textit{Kontsevich moduli space} of stable maps of class $\beta$ from a rational pre-stable $n$-pointed curve to $X$. The boundary $\partial\overline{M}_{0,n}(X,\beta) = \overline{M}_{0,n}(X,\beta)\setminus M_{0,n}(X,\beta)$ is a simple normal crossing divisor in $\overline{M}_{0,n}(X,\beta)$, whose points parametrize isomorphism classes of stable maps $[C,(x_{1},...,x_{n}),\alpha]$ where $C$ is a reducible curve. When $X = \mathbb{P}^N$, we will write $\overline{M}_{0,n}(\mathbb{P}^N,d)$ for $\overline{M}_{0,n}(\mathbb{P}^N,d[L])$, where $L\subseteq\mathbb{P}^N$ is a line.

\subsubsection{Natural morphisms}\label{natmorp}
There are several natural morphisms defined on $\overline{M}_{0,n}(X,\beta)$. For any $i\in\{1,...,n\}$ we have the $i$-th \textit{evaluation map}:
$$
\begin{array}{cccc}
ev_i: &\overline{M}_{0,n}(X,\beta)& \longrightarrow & X\\
      & \left[C,(x_1,...,x_n),\alpha\right] & \longmapsto & \alpha(x_i)
\end{array}
$$
Furthermore, there are the \textit{forgetful morphisms}
\stepcounter{Theorem}
\begin{equation}\label{forg1}
\pi_{i_1,...,i_k}:\overline{M}_{0,n}(X,\beta)\rightarrow \overline{M}_{0,n-k}(X,\beta)
\end{equation}
forgetting the marked points $x_{i_1},...,x_{i_k}$ and stabilizing the resulting map, and if $n\geq3$ we have the forgetful morphism
$$\rho:\overline{M}_{0,n}(X,\beta)\rightarrow \overline{M}_{0,n}$$
forgetting the map and stabilizing the domain curve.

\begin{Remark}\label{kfm}
If $d = N = 1$ the Kontsevich moduli space $\overline{M}_{0,n}(\mathbb{P}^1,1)$ is isomorphic to the Fulton-MacPherson compactification $\mathbb{P}^1[n]$. By Section \ref{natmorp} we have a morphism
$$
\begin{array}{cccc}
ev := ev_1\times ...\times ev_n: &\overline{M}_{0,n}(\mathbb{P}^1,1)& \longrightarrow & (\mathbb{P}^{1})^n\\
      & \left[C,(x_1,...,x_n),\alpha\right] & \longmapsto & (\alpha(x_1),...,\alpha(x_n))
\end{array}
$$
For any $\{i_1,...,i_k\}\subset\{1,...,n\}$ with $k\geq 2$ let $D_{i_1,...,i_k}\subset \overline{M}_{0,n}(\mathbb{P}^1,1)$ be the divisor whose general point corresponds to a stable map $[C,(x_1,...,x_n),\alpha]$, where $C = C_1\cup C_2$ is the union of two smooth rational curves, $C_1$ has marked points $x_{i_1},...,x_{i_k}$ and is contracted to a point via $\alpha$, while $C_2$ is mapped isomorphically onto $\mathbb{P}^1$ by $\alpha$. Then $ev(D_{i_1,...,i_k}) = \Delta_{i_1,...,i_k}\subset (\mathbb{P}^{1})^n$, and $ev$ is exactly the blow-up morphism $g_n:\mathbb{P}^1[n]\rightarrow(\mathbb{P}^1)^n$ in Proposition \ref{sym}.
\end{Remark}

We will need the following simple result on the fibers of the evaluation maps.

\begin{Lemma}\label{isofib}
Let $X$ be a homogeneous variety. Then all the fibers of the evaluation map $ev_i:\overline{M}_{0,n}(X,\beta)\rightarrow X$ are isomorphic.
\end{Lemma}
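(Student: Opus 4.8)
The plan is to exploit the fact that a homogeneous variety $X$ carries a transitive action of a connected algebraic group $G$, and to promote this to an action on the moduli space under which the evaluation map is equivariant. First I would define the action of $G$ on $\overline{M}_{0,n}(X,\beta)$ by post-composition: to $g\in G$ and a stable map $[C,(x_1,\dots,x_n),\alpha]$ one associates $[C,(x_1,\dots,x_n),g\circ\alpha]$. Since $g$ is an automorphism of $X$, the domain curve, the marked points, and the collection of components contracted by the map are all left unchanged, so stability is preserved and the assignment again lands in the locus of stable maps.

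The one point that genuinely needs checking is that the homology class is preserved, that is $(g\circ\alpha)_*[C]=\beta$. This follows from the connectedness of $G$: every $g\in G$ lies in the connected component of the identity, hence $g_*$ acts as the identity on $H_2(X,\mathbb{Z})$, and therefore $(g\circ\alpha)_*[C]=g_*\alpha_*[C]=g_*\beta=\beta$. Thus post-composition does define an action of $G$ on $\overline{M}_{0,n}(X,\beta)$ by automorphisms. I would then observe that the evaluation map $ev_i$ is $G$-equivariant for this action on the source and the given action of $G$ on $X$, since
$$ev_i(g\cdot[C,(x_1,\dots,x_n),\alpha]) = (g\circ\alpha)(x_i) = g\cdot\alpha(x_i) = g\cdot ev_i([C,(x_1,\dots,x_n),\alpha]).$$

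Finally, given two points $p,q\in X$, transitivity of the $G$-action supplies some $g\in G$ with $g\cdot p=q$; by equivariance the automorphism $g$ of $\overline{M}_{0,n}(X,\beta)$ restricts to an isomorphism $ev_i^{-1}(p)\xrightarrow{\sim} ev_i^{-1}(q)$ between the corresponding fibers. As $p$ and $q$ were arbitrary, all fibers of $ev_i$ are isomorphic. The only substantive step is the verification that the homology class is preserved, which is precisely where the connectedness of the group acting on $X$ enters; every other step is formal.
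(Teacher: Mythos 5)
Your proof is correct and follows essentially the same route as the paper: the paper takes $\mu\in\Aut^{o}(X)$ with $\mu(p)=q$, notes that membership in the connected component of the identity forces preservation of the class $\beta$, and defines the fiber isomorphism $[C,(x_1,\dots,x_n),\alpha]\mapsto[C,(x_1,\dots,x_n),\mu\circ\alpha]$, which is exactly your post-composition argument (phrased directly on the two fibers rather than as a global equivariant group action).
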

\begin{proof}
Let $p,q\in X$ be two points, and let $F_p = ev_i^{-1}(p)$, $F_q = ev_i^{-1}(q)$ be the corresponding two fibers of $ev_i$. Let $\mu\in \Aut^{o}(X)$ be an automorphism of $X$ such that $\mu(p) = q$. Since $\mu$ is in the connected component of the identity of $\Aut(X)$ it must preserve the class $\beta$, and  
$$
\begin{array}{cccc}
f_{\mu} &F_p& \longrightarrow & F_q\\
      & \left[C,(x_1,...,x_n),\alpha\right] & \longmapsto & \left[C,(x_1,...,x_n),\mu\circ\alpha\right]
\end{array}
$$
is an isomorphism whose inverse is $f_{\mu^{-1}}$.
\end{proof}

\section{Base point free pencils on $\mathbb{P}^1[n]$}\label{LinPenc}
The main purpose of this section is to classify base point free pencils on the Fulton-MacPherson compactification $\mathbb{P}^1[n]$ of the configuration space of $n$ points on $\mathbb{P}^1$. We begin by describing fibrations of the Cartesian product of smooth curves of genus different from one.

\begin{Lemma}\label{fibg2}
Let $C_1,...,C_n$, $B_1,...,B_r$ be smooth projective curves with either $g(B_i)\geq 2$ for any $i = 1,...,r$ or $g(C_i)= 0$ for any $i=1,...,n$, and let $\psi:C_1\times ...\times C_n\rightarrow B_1\times ...\times B_r$ be a dominant morphism. Then there exist $i_1,...,i_r\in\{1,...,n\}$, and morphisms $f_{i_j}:C_{i_j}\rightarrow B_j$ such that the following diagram
  \[
  \begin{tikzpicture}[xscale=4.8,yscale=-1.8]
    \node (A0_0) at (0, 0) {$C_1\times ...\times C_n$};
    \node (A1_0) at (0, 1) {$C_{i_1}\times ...\times C_{i_r}$};
    \node (A1_1) at (1, 1) {$B_1\times ...\times B_r$};
    \path (A0_0) edge [->,swap]node [auto] {$\scriptstyle{\pi_{i_1}\times ...\times\pi_{i_r}}$} (A1_0);
    \path (A1_0) edge [->]node [auto] {$\scriptstyle{f_{i_1}\times ...\times f_{i_r}}$} (A1_1);
    \path (A0_0) edge [->]node [auto] {$\scriptstyle{\psi}$} (A1_1);
  \end{tikzpicture}
  \]
commutes, where $\pi_{i_j}:C_1\times ...\times C_{n}\rightarrow C_{i_j}$ is the $i_j$-th canonical projection. In particular, any dominant morphism $\phi:C^n\rightarrow C$ with $g(C)\neq 1$ factors through one of the projections.
\end{Lemma}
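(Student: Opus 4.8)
Let $C_1,...,C_n$, $B_1,...,B_r$ be smooth projective curves with either $g(B_i)\geq 2$ for any $i$ or $g(C_i)= 0$ for any $i$, and let $\psi:C_1\times ...\times C_n\rightarrow B_1\times ...\times B_r$ be a dominant morphism. Then there exist $i_1,...,i_r$ and morphisms $f_{i_j}:C_{i_j}\rightarrow B_j$ such that $\psi$ factors as a product of projections followed by $f_{i_1}\times...\times f_{i_r}$.

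Let me think about how to prove this.

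**The structure of the problem.** A morphism into a product $B_1 \times ... \times B_r$ is the same as giving $r$ morphisms $\psi_j: C_1 \times ... \times C_n \to B_j$, namely $\psi_j = \pi_j \circ \psi$ where $\pi_j$ is the $j$-th projection of the target. So it suffices to handle each factor separately: I want to show each $\psi_j$ factors through a single projection $\pi_{i_j}: \prod C_k \to C_{i_j}$ followed by a morphism $f_{i_j}: C_{i_j} \to B_j$.

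So the heart of the matter is the last sentence: **any dominant morphism $\phi: C_1 \times ... \times C_n \to B$ to a single curve $B$ factors through one of the projections**, under the hypotheses.

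**Two cases to handle.**

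*Case $g(B) \geq 2$.* This is a classical rigidity phenomenon. A map $\phi: \prod C_k \to B$ to a curve of genus $\geq 2$. Fix all coordinates except the $k$-th: we get a map $C_k \to B$. If this is nonconstant for some $k$... but I want to show only ONE coordinate matters. The key tool is that $\Hom(C_k, B)$ is rigid when $g(B) \geq 2$: maps from a fixed curve to a curve of genus $\geq 2$ have no nontrivial deformations (the tangent space $H^0(C_k, \phi^* T_B) = 0$ since $T_B$ has negative degree and $\phi^*T_B$ has negative degree or... need care if $\phi$ constant). More precisely, by rigidity of maps to hyperbolic curves, as I vary the other coordinates, the restriction $\phi|_{C_k \times \{pt\}}$ can only vary in a discrete (hence, by connectedness, constant) family. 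This forces the map restricted to each $C_k$-slice to be independent of the other coordinates. Then I argue that at most one coordinate can be non-constant: if two coordinates $k, l$ both gave nonconstant maps, I'd get a dominant map from a surface factor $C_k \times C_l$ to $B$, but a map $C_k \times C_l \to B$ with $g(B) \geq 2$ must factor through a projection (this is essentially the $n=2$ base case, the de Franchis / rigidity argument), contradiction unless one is constant.

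*Case $g(C_i) = 0$ for all $i$, i.e. $C_i = \mathbb{P}^1$.* Here $B$ could be any target curve among $B_1,...,B_r$ (no genus restriction stated in this case — but a dominant map $(\mathbb{P}^1)^n \to B$ forces $B$ rational, so $B \cong \mathbb{P}^1$). The plan is: $\phi: (\mathbb{P}^1)^n \to \mathbb{P}^1$ is given by a rational function. Restricting to each slice $\mathbb{P}^1 \to \mathbb{P}^1$ and using that $\mathrm{Pic}((\mathbb{P}^1)^n) = \mathbb{Z}^n$, the pullback $\phi^*\mathcal{O}(1)$ has multidegree $(d_1,...,d_n)$. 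A fiber is a divisor of this multidegree; connectedness/irreducibility of the general fiber together with the pencil structure should force all but one $d_i$ to vanish.

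**My plan, step by step.** First, I reduce to the single-target case $r=1$ by composing with the target projections, as above. Second, I set up the slice-restriction maps $\phi_k^{(\underline{a})}: C_k \to B$ obtained by fixing all but the $k$-th coordinate at a point $\underline{a}$. Third, in the genus $\geq 2$ case I invoke rigidity: $H^0(C_k, (\phi_k^{(\underline a)})^* T_B) = 0$ when the restriction is nonconstant (negative-degree line bundle on a curve has no sections), so the restriction map is locally constant in $\underline a$, hence constant on the connected base; this shows each slice-map is independent of the remaining coordinates. Fourth, I prove the two-factor statement (a dominant map $C_k \times C_l \to B$ factors through a projection) as the crucial base case and bootstrap: if coordinates $k$ and $l$ were both "active," restricting to $C_k \times C_l \times \{\text{pt}\}$ would contradict it. That isolates a single active coordinate $i_1$, giving $\phi = f_{i_1} \circ \pi_{i_1}$.

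**The main obstacle.** I expect the genuinely hard step to be the two-factor base case — showing a dominant $\phi: C_1 \times C_2 \to B$ with $g(B)\geq 2$ factors through a projection — because this is where the rigidity of hyperbolic targets does real work rather than bookkeeping. The cleanest route is: for fixed $x \in C_1$ the map $\{x\}\times C_2 \to B$ has no infinitesimal deformations (as $g(B) \geq 2$ forces $(\phi|_{\{x\}\times C_2})^*T_B$ to have nonpositive degree with no sections unless the map is constant), so the family of these maps over $C_1$ is rigid, hence the restriction to $C_2$-slices is independent of $x$; symmetrically for $C_1$-slices; and a short argument (e.g. comparing the two resulting factorizations, or a dimension/generic-smoothness count on the fibers) rules out both coordinates being simultaneously nonconstant. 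I should be careful to treat the degenerate subcases where some slice maps are constant, and in the rational case to argue that $B$ must itself be rational before applying the multidegree analysis. The rest of the argument — passing from slices to a global factorization and then assembling the target factors — is formal and follows by the universal property of the product together with the connectedness of the $C_k$.

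\medskip
\noindent\textit{Note.} The final clause of the statement (the case $\phi:C^n\to C$) is the special case $n$ arbitrary, $r=1$, all $C_i=C$, $B=C$ with $g(C)\neq 1$; it follows immediately from the single-target argument above, since $g(C)\geq 2$ falls in the first case and $g(C)=0$ in the second.
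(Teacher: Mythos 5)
Your proposal is correct in outline, and on the key case it takes a genuinely different route from the paper. For the genus $\geq 2$ case the paper does no work of its own: it cites \cite[Lemma 3.8]{Ca00} (Catanese), remarking only that the two-factor proof there extends verbatim to $n$ factors. You instead give a self-contained rigidity argument: for a nonconstant slice map $f:C_k\to B$ one has $\deg f^{*}T_B<0$, so $H^0(C_k,f^{*}T_B)=0$ and $f$ is an isolated reduced point of $\Hom(C_k,B)$; hence the connected family of slice maps is constant. This is sound, and in fact it finishes faster than you realize: once all the $C_k$-slices are equal to a single nonconstant $f_k$, the identity $\psi(x)=f_k(x_k)$ already gives $\psi=f_k\circ\pi_k$, so your ``two-factor base case'' and the contradiction ruling out two simultaneously active coordinates are redundant. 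The degenerate subcase you flag (some slices constant, some not) is settled inside your own framework: the image of the connected variety $\prod_{j\neq k}C_j$ in $\Hom(C_k,B)$ cannot meet both the positive-dimensional component of constant maps and an isolated nonconstant point (equivalently, the slice degree is an intersection number, hence independent of the slice). For the rational case your route is the paper's --- multidegree $(d_1,\dots,d_n)$ in $\Pic((\mathbb{P}^1)^n)$, with all but one $d_i$ forced to vanish --- but note that the operative mechanism is the one hidden in your phrase ``pencil structure,'' namely that distinct fibers are disjoint, so the fiber class has self-intersection zero; expanding $(\sum_i d_ih_i)^2=2\sum_{i<j}d_id_jh_ih_j=0$ gives $d_id_j=0$ for all $i<j$. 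The ``connectedness/irreducibility of the general fiber'' you also invoke does nothing here: irreducible curves of class $h_1+h_2$ on $\mathbb{P}^1\times\mathbb{P}^1$ exist, but any two of them meet, which is exactly why they cannot be fibers. Your explicit L\"uroth remark that the targets must be $\mathbb{P}^1$ is a point the paper leaves implicit.
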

\begin{proof}
It is enough to prove that for any projection $p_i:B_1\times ...\times B_r\rightarrow B_i$ the morphism $p_i\circ\psi$ factors through some projection $\pi_{j_i}:C_1\times ...\times C_n\rightarrow C_{j_i}$.\\
If $g(B_i)\geq 2$ the result follows from \cite[Lemma 3.8]{Ca00}. Indeed, the statement of \cite[Lemma 3.8]{Ca00} concerns products of two curves but its proof works identically for a product of an arbitrary number of curves.\\
Now, let us consider the case of $X=\mathbb{P}^{1}_{1} \times ...\times \mathbb{P}^{1}_{n}$. Let $l_i$ be the class of the $i$-th factor of the product, and $h_i = \pi_i^{*}\mathcal{O}_{\mathbb{P}^1_{i}}(1)$. Let us recall that the Chow ring of $X$ is given by 
$$A^{*}(X) \cong \mathbb{Z}[h_1,...,h_n]/(h_1^2,...,h_n^2).$$
Furthermore $l_i\cdot h_j = \delta_{ij}$, where $\delta_{ij}$ is the Kronecker delta. Since $X$ is a toric variety its Mori cone $\NE(X)$ is the polyhedral cone generated by classes of the torus invariant curves $l_i$ for $i = 1,...,n$. Moreover, since $\Pic(X)$ is generated by $h_1,...,h_n$ we may write a divisor $D$ on $X$ as $D \equiv a_1h_1+...+a_nh_n$, and $D\cdot l_i = a_i$ yields that $D$ is nef if and only if $a_i\geq 0$ for $i=1,...,n$.\\
Now, a dominant morphism $\phi:X\rightarrow\mathbb{P}^1$ is induced by a nef divisor $D \equiv a_1h_1+...+a_nh_n$ on $X$. Let $p,q\in\mathbb{P}^1$ be two distinct points in $\mathbb{P}^1$, then the fibers $D_p = \phi^{-1}(p), D_q = \phi^{-1}(q)$ are elements of the linear system of $D$, that is they are linearly equivalent to $D \equiv a_1h_1+...+a_nh_n$. Now, since $p\neq q$ and keeping in mind that $h_i^2=0$ for any $i$ we get that 
$$D_p\cdot D_q = \sum_{i=1}^{n}a_i^2h_i^2+2\sum_{i<j}a_ia_jh_ih_j =2\sum_{i<j}a_ia_jh_ih_j =0$$
in the group $A^2(X)$ of classes of cycles of codimension two in $X$. Hence $n-1$ of the $a_i$'s must be zero. Let $a_j$ be the only non-zero coefficient in the expression of $D$. Then $D\equiv a_jh_j$, and $\phi$ factors through the projection $\pi_j:X\rightarrow \mathbb{P}^1_{j}$.
\end{proof}

\begin{Remark}
Note that Lemma \ref{fibg2} does not hold for curves of genus one, or more generally for abelian varieties. Indeed, if $A$ is an abelian variety the multiplication map $A\times A\rightarrow A$ does not factor through one of the projections.
\end{Remark}

For the rest of this section we will focus on dominant morphisms $\mathbb{P}^1[n]\rightarrow\mathbb{P}^1$. Recall that by Remark \ref{kfm} we can identify $\mathbb{P}^1[n]$ with the Kontsevich moduli space $\overline{M}_{0,n}(\mathbb{P}^1,1)$.

\subsection{The Picard group of $\mathbb{P}^1[n]$}\label{cones}
We summarize the results on the Picard group of $\overline{M}_{0,n}(\mathbb{P}^N,d)$ in \cite{CHS} for the particular case $\overline{M}_{0,n}(\mathbb{P}^1,1)\cong\mathbb{P}^1[n]$. By \cite[Section 2.1]{CHS} there is a morphism
$$f_p:\overline{M}_{0,n+1}\rightarrow \mathbb{P}^1[n]$$
defined as follows. Let us fix a point $p\in\mathbb{P}^1$. For any $[C,(x_1,...,x_{n+1})]\in\overline{M}_{0,n+1}$ we attach a $\mathbb{P}^1$ at $x_{n+1}$, and consider the morphism $\alpha:C\cup\mathbb{P}^1\rightarrow\mathbb{P}^1$ that contracts $C$ to $p\in\mathbb{P}^1$ and maps the added rational tail isomorphically to $\mathbb{P}^1$.
$$
\begin{tikzpicture}
\draw  plot[smooth, tension=.7] coordinates {(-5,0) (-10,0)};
\draw  plot[smooth, tension=.7] coordinates {(-6,1) (-6,-1)};
\draw  plot[smooth, tension=.7] coordinates {(-2,1) (-2,-1)};
\node at (-10,0.25) {$\scriptstyle{C}$};
\node at (-9,-0.25) {$\scriptstyle{x_1}$};
\node at (-9,0) {$\scriptstyle{\bullet}$};
\node at (-8,-0.25) {$\scriptstyle{...}$};
\node at (-7,-0.25) {$\scriptstyle{x_{n}}$};
\node at (-7,0) {$\scriptstyle{\bullet}$};
\node at (-5.5,-0.25) {$\scriptstyle{x_{n+1}}$};
\node at (-5.7,1) {$\scriptstyle{\mathbb{P}^1}$};
\node at (-1.7,1) {$\scriptstyle{\mathbb{P}^1}$};
\node (A) at (-4.5,0) {};
\node (B) at (-2.5,0) {};
\path (A) edge [->]node [auto] {$\scriptstyle{\alpha}$} (B);
\node at (-1.8,0) {$\scriptstyle{p}$};
\node at (-6,0) {$\scriptstyle{\bullet}$};
\node at (-2,0) {$\scriptstyle{\bullet}$};
\end{tikzpicture}
$$
Furthermore, for any $i=1,...,n$ there is a morphism
$$g_i:\mathbb{P}^1\rightarrow \mathbb{P}^1[n]$$
defined as follows. Let us fix an $(n-1)$-pointed rational curve $C$. At a general point of $C$ we attach a $\mathbb{P}^1$ with the marked point $x_i\in\mathbb{P}^1$. The domain of the stable map is $C\cup\mathbb{P}^1$, and the map $\alpha:C\cup\mathbb{P}^1\rightarrow\mathbb{P}^1$ is the identity on $\mathbb{P}^1$, and contracts $C$. 
$$
\begin{tikzpicture}
\draw  plot[smooth, tension=.7] coordinates {(-5,0) (-10.5,0)};
\draw  plot[smooth, tension=.7] coordinates {(-6,1) (-6,-1)};
\draw  plot[smooth, tension=.7] coordinates {(-2,1) (-2,-1)};
\node at (-10.5,0.25) {$\scriptstyle{C}$};
\node at (-10,-0.25) {$\scriptstyle{x_1}$};
\node at (-10,0) {$\scriptstyle{\bullet}$};
\node at (-9.5,-0.25) {$\scriptstyle{...}$};
\node at (-8.9,-0.25) {$\scriptstyle{x_{i-1}}$};
\node at (-8.9,0) {$\scriptstyle{\bullet}$};
\node at (-8,-0.25) {$\scriptstyle{x_{i+1}}$};
\node at (-8,0) {$\scriptstyle{\bullet}$};
\node at (-7.5,-0.25) {$\scriptstyle{...}$};
\node at (-7,-0.25) {$\scriptstyle{x_{n}}$};
\node at (-7,0) {$\scriptstyle{\bullet}$};
\node at (-5.7,1) {$\scriptstyle{\mathbb{P}^1}$};
\node at (-1.7,1) {$\scriptstyle{\mathbb{P}^1}$};
\node (A) at (-4.5,0) {};
\node (B) at (-2.5,0) {};
\path (A) edge [->]node [auto] {$\scriptstyle{\alpha}$} (B);
\node at (-2,0) {$\scriptstyle{\bullet}$};
\node at (-5.75,0.5) {$\scriptstyle{x_i}$};
\node at (-6,0.5) {$\scriptstyle{\bullet}$};
\end{tikzpicture}
$$
Varying the point $x_i\in\mathbb{P}^1$ we get the morphism $g_i:\mathbb{P}^1\rightarrow \mathbb{P}^1[n]$. By \cite[Theorem 2.3]{CHS} we have that the map
\stepcounter{Theorem}
\begin{equation}\label{maph}
h : = f_p^{*}\times g_{1}^{*}\times ...\times g_n^{*}:\Pic(\mathbb{P}^1[n])\rightarrow \Pic(\overline{M}_{0,n+1})\times \Pic(\mathbb{P}^1)^n
\end{equation}
is an isomorphism. Furthermore the image via $h$ of the ample, nef and eventually-free cone of $\mathbb{P}^1[n]$ is the product of the ample, nef and eventually-free cones respectively of $\overline{M}_{0,n+1}$ and of the $\mathbb{P}^1$ factors.

\begin{Notation}\label{not}
The map $h = f_p^{*}\times g_1^{*}\times ...\times g_n^{*}$ in (\ref{maph}) defines an isomorphism between $\Pic(\mathbb{P}^1[n])$ and $\Pic(\overline{M}_{0,n+1})\times\Pic(\mathbb{P}^1)^n$. Therefore, we may write any divisor $D$ in $\mathbb{P}^1[n]$ as $D\equiv D_{0,n+1}+a_1H_1+...+a_nH_n$, where $D_{0,n+1}$ is a divisor on $\overline{M}_{0,n+1}$, the $H_i$'s are generators of the factors $\Pic(\mathbb{P}^1)$, and $a_i\in\mathbb{Z}$. 
\end{Notation}

\subsection{Modular linear pencils on $\overline{M}_{0,n}(\mathbb{P}^N,d)$}
By Section \ref{natmorp} there are several natural morphisms from $\overline{M}_{0,n}(\mathbb{P}^N,d)$ onto $\mathbb{P}^1$. We may consider the composition 
\stepcounter{Theorem}
\begin{equation}\label{forg}
\overline{M}_{0,n}(\mathbb{P}^N,d)\xrightarrow{\makebox[1.5cm]{$\rho$}}\overline{M}_{0,n}\xrightarrow{\pi_{i_1,...,i_{n-4}}}\overline{M}_{0,4}\cong\mathbb{P}^1
\end{equation}
where $\pi_{i_1,...,i_{n-4}}:\overline{M}_{0,n}\rightarrow\overline{M}_{0,4}$ is the morphism forgetting the points labeled by $i_1,...,i_{n-4}$.\\
Furthermore, if $N = 1$ we have the evaluation maps 
\stepcounter{Theorem}
\begin{equation}\label{evaluation}
ev_i:\overline{M}_{0,n}(\mathbb{P}^1,d)\rightarrow\mathbb{P}^1
\end{equation}
Finally, if $N= d=1$ we have also the forgetful morphisms
$$\pi_{i_1,...,i_{n-1}}:\mathbb{P}^1[n]\cong\overline{M}_{0,n}(\mathbb{P}^1,1)\rightarrow\mathbb{P}^1[1]\cong \overline{M}_{0,1}(\mathbb{P}^1,1)\cong\mathbb{P}^1$$
forgetting the points labeled with $i_1,...,i_{n-1}$.

\begin{Remark}\label{forgeval}
The map $ev_1:\overline{M}_{0,1}(\mathbb{P}^1,1)\rightarrow\mathbb{P}^1$ is an isomorphism, and the diagram 
  \[
  \begin{tikzpicture}[xscale=3.5,yscale=-2.2]
    \node (A0_0) at (0, 0) {$\mathbb{P}^1[n]$};
    \node (A1_0) at (0, 1) {$\overline{M}_{0,1}(\mathbb{P}^1,1)\cong\mathbb{P}^1[1]$};
    \node (A1_1) at (1, 1) {$\mathbb{P}^1$};
    \path (A0_0) edge [->,swap]node [auto] {$\scriptstyle{\pi_{i_1,...,i_{n-1}}}$} (A1_0);
    \path (A1_0) edge [->]node [auto] {$\scriptstyle{ev_1}$} (A1_1);
    \path (A0_0) edge [->]node [auto] {$\scriptstyle{ev_{j}}$} (A1_1);
  \end{tikzpicture}
  \]
is commutative, where $\{j\} = \{1,...,n\}\setminus\{i_1,...,i_{n-1}\}$. Therefore, for any $j\in\{i,...,n\}$ the morphisms $ev_j$ and $\pi_{\{1,...,n\}\setminus\{j\}}$ are induced by the same base point free pencil on $\mathbb{P}^1[n]$.
\end{Remark}

\begin{Definition}\label{modmor}
A \textit{modular base point free pencil} on $\overline{M}_{0,n}(\mathbb{P}^1,d)$ is a linear system associated either to a forgetful morphism $\pi_{i_1,...,i_{n-4}}\circ\rho$ of type (\ref{forg}) or to an evaluation morphism $ev_i$ of type (\ref{evaluation}). 
\end{Definition}

Now let us consider the Fulton-MacPherson compactification $\mathbb{P}^1[n]$. By Proposition \ref{sym} we have that $\mathbb{P}^1[1]\cong\mathbb{P}^1$, $\mathbb{P}^1[2]\cong\mathbb{P}^1\times \mathbb{P}^1$, and $\mathbb{P}^1[3]\cong Bl_{\Delta_{1,2,3}}(\mathbb{P}^1)^{3}$, where $\Delta_{1,2,3}\subset (\mathbb{P}^1)^{3}$ is the small diagonal. This variety appears among the Fano $3$-folds of Picard number four in \cite{MM}. For the convenience of the reader we give a proof of the following well-known fact.

\begin{Lemma}\label{lemmaFano1}
The blow-up of $(\mathbb{P}^1)^{3}$ along the small diagonal $\Delta_{1,2,3}\subset (\mathbb{P}^1)^{3}$ is isomorphic to the blow-up of $\mathbb{P}^3$ along three skew lines $L_1,L_2,L_3$.
\end{Lemma}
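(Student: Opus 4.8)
The plan is to exhibit an explicit birational map between the two threefolds and then show that, after the prescribed blow-ups, it becomes an isomorphism by pinning down the single divisor that gets contracted. I would place the three skew lines in convenient coordinates on $\mathbb{P}^3$, say $L_1=\{x_0=x_1=0\}$, $L_2=\{x_2=x_3=0\}$ and $L_3=\{x_0=x_2,\ x_1=x_3\}$, and consider the three linear projections from these lines, $\pi_{L_i}\colon\mathbb{P}^3\dashrightarrow\mathbb{P}^1$, given respectively by $[x_0:x_1]$, $[x_2:x_3]$ and $[x_0-x_2:x_1-x_3]$. Their product $\pi=(\pi_{L_1},\pi_{L_2},\pi_{L_3})\colon\mathbb{P}^3\dashrightarrow(\mathbb{P}^1)^3$ is the rational map I would analyse. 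A direct computation recovers a preimage of a general $(a,b,c)\in(\mathbb{P}^1)^3$ and thereby produces the inverse $\Psi\colon(\mathbb{P}^1)^3\dashrightarrow\mathbb{P}^3$ given by the four sections $\bigl((b_0c_1-b_1c_0)a_0,\ (b_0c_1-b_1c_0)a_1,\ (a_0c_1-a_1c_0)b_0,\ (a_0c_1-a_1c_0)b_1\bigr)$ of $\mathcal{O}(1,1,1)$, whose common zero locus is exactly the small diagonal $\Delta=\{a=b=c\}$. So $\pi$ is birational, with base locus $L_1\cup L_2\cup L_3$ and inverse base locus $\Delta$.

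Next I would resolve $\pi$. Since projection from a line is resolved precisely by blowing up that line, and $L_1,L_2,L_3$ are pairwise disjoint, blowing them up simultaneously turns each component of $\pi$ into a morphism; hence $\pi$ lifts to a morphism $\Phi\colon W\to(\mathbb{P}^1)^3$ on $W=\Bl_{L_1,L_2,L_3}\mathbb{P}^3$. The heart of the argument is to identify the exceptional behaviour of $\Phi$. The three lines all lie on the unique smooth quadric $Q=\{x_0x_3=x_1x_2\}$, and a short computation shows that on $Q$ the three conditions $a=b$, $a=c$, $b=c$ all coincide and force $a=b=c$; thus $\pi$ contracts $Q$ onto $\Delta$. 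Writing $Q\cong\mathbb{P}^1\times\mathbb{P}^1$ via the Segre parametrization, the restriction of $\Phi$ to the strict transform $\widetilde Q$ is one of the two projections $Q\to\mathbb{P}^1\cong\Delta$, contracting the ruling complementary to the one containing $L_1,L_2,L_3$; moreover $\widetilde Q\cong Q$ since each $L_i$ is a Cartier divisor on $Q$. I would then check that the exceptional divisors $E_i$ over the lines are not contracted (for instance $\Phi(E_1)=\{b=c\}$ is a divisor), so by the equality $\rho(W)=\rho((\mathbb{P}^1)^3)+1$ the divisor $\widetilde Q$ is the only divisor contracted by $\Phi$, and $\Phi$ is an isomorphism away from it.

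Finally I would conclude that $\Phi$ is exactly the blow-up of $(\mathbb{P}^1)^3$ along $\Delta$. At this stage $\Phi\colon W\to(\mathbb{P}^1)^3$ is a birational morphism of smooth projective threefolds contracting the single irreducible divisor $\widetilde Q$, a $\mathbb{P}^1$-bundle over the smooth curve $\Delta$, with the $\mathbb{P}^1$-fibres sent to points of $\Delta$; verifying $\widetilde Q\cdot(\text{fibre})=-1$ identifies $\widetilde Q$ with the projectivized normal bundle carrying the correct self-intersection, so the standard divisorial blow-down criterion (equivalently, the universal property, $\Phi^{-1}\mathcal{I}_\Delta\cdot\mathcal{O}_W$ being the invertible ideal of $\widetilde Q$) yields a morphism $W\to\Bl_\Delta(\mathbb{P}^1)^3$ over $(\mathbb{P}^1)^3$ that is an isomorphism. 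I expect the main obstacle to be precisely this last step: showing rigorously that the contraction of $\widetilde Q$ is a \emph{smooth} blow-up of the curve $\Delta$ rather than merely a birational contraction. This can be handled either by the blow-down criterion for smooth threefolds or, more concretely, by computing $\Phi$ in local charts along $\widetilde Q$ and matching the exceptional $\mathbb{P}^1$-bundle with $\mathbb{P}(N_{\Delta/(\mathbb{P}^1)^3})$; the explicit inverse $\Psi$ makes such a local verification feasible.
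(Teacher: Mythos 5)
Your proposal is correct and follows essentially the same route as the paper: both resolve the rational map given by the three linear projections on $\Bl_{L_1,L_2,L_3}\mathbb{P}^3$, identify the strict transform of the unique quadric through $L_1,L_2,L_3$ as the sole contracted divisor (mapped to the small diagonal), and conclude via the universal property of the blow-up combined with smoothness and Picard-number considerations. Your version merely makes the paper's synthetic argument explicit in coordinates (including the inverse map and the intersection number $\widetilde{Q}\cdot f=-1$), which is a presentational rather than a structural difference.
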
 
\begin{proof}
Let $\pi_i:\mathbb{P}^3\dasharrow\mathbb{P}^1$ be the projection with center the line $L_i$, and let us consider the rational map 
$$
\begin{array}{cccc}
\pi:&\mathbb{P}^3& \dasharrow & \mathbb{P}^1\times\mathbb{P}^1\times\mathbb{P}^1\\
 & x & \longmapsto & (\pi_1(x),\pi_2(x),\pi_3(x))
\end{array}
$$
The locus contracted by $\pi$ is the union of the lines in $\mathbb{P}^3$ intersecting the $L_i$'s, that is the unique quadric surface $Q$ containing $L_1,L_2,L_3$. Clearly $\pi(Q) = \Delta_{1,2,3}$.\\
Therefore, $\pi$ induces a birational morphism $\overline{\pi}:Bl_{L_1,L_2,L_3}\mathbb{P}^3\rightarrow(\mathbb{P}^1)^3$, whose exceptional locus is the strict transform $\widetilde{Q}$ of $Q$, and such that $\overline{\pi}(\widetilde{Q}) = \Delta_{1,2,3}$. Now, the universal property of the blow-up \cite[Proposition 7.14]{Har} yields a birational morphism $\widetilde{\pi}:Bl_{L_1,L_2,L_3}\mathbb{P}^3\rightarrow Bl_{\Delta_{1,2,3}}(\mathbb{P}^1)^3$ mapping $\widetilde{Q}$ to the exceptional divisor over $\Delta_{1,2,3}$. Finally, since $Bl_{L_1,L_2,L_3}\mathbb{P}^3$ and $Bl_{\Delta_{1,2,3}}(\mathbb{P}^1)^3$ are smooth and have the same Picard number $\widetilde{\pi}$ is an isomorphism.
\end{proof}

Let us denote by $E_i\subset\mathbb{P}^{1}[3]$ the exceptional divisor over $L_i$, and by $\widetilde{H}$ the pull-back of a general hyperplane of $\mathbb{P}^3$ via the blow-up morphism. Then $E_i\cong\mathbb{P}^1\times\mathbb{P}^1$. We will denote by $R_i,\sigma_i$ the classes of the two rulings of $E_i$, where $R_i$ is contracted by the blow-up map. Finally, let $\widetilde{L}$ be the pull-back of a general line in $\mathbb{P}^3$.\\
By \cite[Theorem 4.1]{DPU} the effective cone $\Eff(\mathbb{P}^1[3])$ of $\mathbb{P}^1[3]$ is the polyhedral cone generated by the extremal rays $2\widetilde{H}-E_1-E_2-E_3$, $H-E_i$ and $E_i$ for $i = 1,2,3$. Our aim is to describe its Mori Cone. 

\begin{Lemma}\label{MoriCone}
The Mori cone $\NE(\mathbb{P}^1[3])$ of $\mathbb{P}^1[3]$ is the polyhedral cone generated by $\widetilde{L}-R_1-R_2-R_3$, $R_i$ and $\sigma_i$ for $i=1,2,3$.
\end{Lemma}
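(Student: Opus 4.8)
The plan is to work in the blow-up model $\mathbb{P}^1[3]\cong Bl_{L_1,L_2,L_3}\mathbb{P}^3$ of Lemma \ref{lemmaFano1}, exploiting that $\rho(\mathbb{P}^1[3])=4$, so that $\{\widetilde H, E_1, E_2, E_3\}$ is a basis of $N^1(\mathbb{P}^1[3])$ and the intersection pairing identifies $N_1$ with $\mathbb{R}^4$ via $C\mapsto(\widetilde H\cdot C,\, E_1\cdot C,\, E_2\cdot C,\, E_3\cdot C)$. First I would observe that the proposed generators are classes of honest curves, which gives one inclusion for free: $R_i$ is the class of a fibre of the blow-down $f\colon\mathbb{P}^1[3]\to\mathbb{P}^3$ contracted inside $E_i$, $\sigma_i$ is the complementary ruling of $E_i\cong\mathbb{P}^1\times\mathbb{P}^1$ (mapped isomorphically onto $L_i$ by $f$), and $\widetilde L-R_1-R_2-R_3$ is the strict transform of a line meeting each of the three skew lines $L_1,L_2,L_3$ (through a general point there is a unique such transversal, on the quadric $Q$). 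All are irreducible, hence effective, so $\NE(\mathbb{P}^1[3])\supseteq V$, where $V$ is the cone generated by the seven classes.

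Next I would compute the intersection numbers against the basis. From the blow-up geometry one gets $\widetilde H\cdot R_i=0$, $\widetilde H\cdot\sigma_i=1$, $E_j\cdot R_i=-\delta_{ij}$ and, crucially, $E_j\cdot\sigma_i=+\delta_{ij}$; for $C_0:=\widetilde L-R_1-R_2-R_3$ these yield $\widetilde H\cdot C_0=1$ and $E_j\cdot C_0=1$. In the coordinates above this reads $R_i=-\mathbf e_i$, $\sigma_i=\mathbf e_0+\mathbf e_i$ and $C_0=\mathbf e_0+\mathbf e_1+\mathbf e_2+\mathbf e_3$, whence the relation $\sigma_i=C_0+\sum_{j\neq i}R_j$. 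Thus each $\sigma_i$ already lies in the cone spanned by $C_0$ and the $R_j$, and $V=\Cone(C_0,R_1,R_2,R_3)$.

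For the reverse inclusion I would dualise. Writing $D\equiv a\widetilde H-\sum_i b_iE_i$, the dual cone $V^\vee=\{D:\ D\cdot C_0\geq0,\ D\cdot R_i\geq0\}$ is cut out by $b_i\geq0$ and $a\geq b_1+b_2+b_3$, so its extremal rays are $\widetilde H$ and $\widetilde H-E_i$ for $i=1,2,3$. Each of these is base point free: $\widetilde H=f^{*}\mathcal O_{\mathbb{P}^3}(1)$ is the pullback of a very ample class under the blow-down, while $\widetilde H-E_i=ev_i^{*}\mathcal O_{\mathbb{P}^1}(1)$ is the class of the projection $\pi_i\colon\mathbb{P}^1[3]\to\mathbb{P}^1$ away from $L_i$ (the morphism that contracts the ruling $\sigma_i$). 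Hence every element of $V^\vee$ is nef, that is $V^\vee\subseteq\Nef(\mathbb{P}^1[3])$. Since $\mathbb{P}^1[3]$ is projective its Mori cone is the dual of the nef cone, so dualising the last inclusion gives $\NE(\mathbb{P}^1[3])=\Nef(\mathbb{P}^1[3])^\vee\subseteq(V^\vee)^\vee=V$, and combined with the first step we conclude $\NE(\mathbb{P}^1[3])=V$.

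The only delicate point is the intersection table of the second step, in particular the sign $E_i\cdot\sigma_i=+1$: this is what makes $\widetilde H-E_i$ (and not some other combination) the relevant nef generator, and it is also what renders the $\sigma_i$ redundant as extremal rays. I would pin it down by noting that the projection $\pi_i$ from $L_i$ contracts the ruling $\sigma_i\subset E_i$, so $(\widetilde H-E_i)\cdot\sigma_i=0$ and therefore $E_i\cdot\sigma_i=\widetilde H\cdot\sigma_i=1$ (equivalently $E_i|_{E_i}=R_i-\sigma_i$ and $E_i^3=-\deg N_{L_i/\mathbb{P}^3}=-2$). Everything else is linear algebra together with the polyhedrality of the cones, which is guaranteed since $\mathbb{P}^1[3]$ is a Fano, hence Mori dream, threefold.
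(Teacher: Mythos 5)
Your proof is correct, but it follows a genuinely different route from the paper's. The paper argues directly on curves: given an irreducible curve $\widetilde{C}\subset\mathbb{P}^1[3]$, either it lies in some $E_i\cong\mathbb{P}^1\times\mathbb{P}^1$, where its class is a non-negative combination of the rulings $R_i,\sigma_i$, or it is the strict transform of an irreducible curve $C\subset\mathbb{P}^3$ of degree $d$ meeting $L_i$ with multiplicity $m_i$, so that $\widetilde{C}\sim d(\widetilde{L}-R_1-R_2-R_3)+\sum_i(d-m_i)R_i$ with $d-m_i\geq 0$ by Bezout; this yields $\NE(\mathbb{P}^1[3])\subseteq V$ with no duality and no intersection-theoretic bookkeeping. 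You instead dualize: after establishing the intersection table (the sign $E_i\cdot\sigma_i=+1$ is indeed the delicate point, and your verification via the projection $\pi_i$ contracting $\sigma_i$ is correct, and consistent with $E_i|_{E_i}=R_i-\sigma_i$ and $E_i^3=-2$), you identify $V^\vee=\Cone(\widetilde{H},\widetilde{H}-E_1,\widetilde{H}-E_2,\widetilde{H}-E_3)$, exhibit each generator as a pullback of an ample class under a morphism (the blow-down $f$, respectively the three resolved projections $ev_i$), and conclude $\NE(\mathbb{P}^1[3])\subseteq\NEbar(\mathbb{P}^1[3])=\Nef(\mathbb{P}^1[3])^\vee\subseteq(V^\vee)^\vee=V$ by Kleiman duality. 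Your route buys more: it determines the nef cone at the same time, attaches an explicit contraction to each of its extremal rays, and, via the relation $\sigma_i=(\widetilde{L}-R_1-R_2-R_3)+\sum_{j\neq i}R_j$, shows that the $\sigma_i$ are redundant generators lying on proper faces of the Mori cone; this in fact corrects the incidental claim in the paper's proof that the classes $\sigma_i$ are extremal (they are not, precisely because of your relation — the lemma itself is unaffected, since a generating set need not consist of extremal rays). The paper's route, on the other hand, is shorter and more elementary, needing only Bezout and the fact that $\NE$ is generated by classes of irreducible curves. One cosmetic slip on your side: transversals to three skew lines do not pass through a general point of $\mathbb{P}^3$ but only sweep out the quadric $Q$, through each point of which there is a unique one; since all you need is the existence of one irreducible curve in the class $\widetilde{L}-R_1-R_2-R_3$, this does not affect the argument.
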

\begin{proof}
Since $\mathbb{P}^1[3]$ is Fano we know that $\NE(\mathbb{P}^1[3])$ is a finitely generated polyhedral cone. Clearly, the classes $R_i,\sigma_i$ are extremal. Now, let $\widetilde{C}\subset\mathbb{P}^1[3]$ be an irreducible curve. If $\widetilde{C}\subset E_i$ we may write its class as a combination with non-negative coefficients of $R_i$ and $\sigma_i$. Therefore, let us assume that $\widetilde{C}\nsubseteq E_i$ for any $i=1,2,3$. In this case $\widetilde{C}$ is the strict transform of an irreducible curve $C\subset\mathbb{P}^3$ of degree $d$ and intersecting $L_i$ with multiplicity $m_i$. In other words we may write
$$\widetilde{C}\sim d\widetilde{L}-m_1R_1-m_2R_2-m_3R_3$$
where $\sim$ denotes numerical equivalence. Now, note that
$$\widetilde{C}\sim d(\widetilde{L}-R_1-R_2-R_3)+(d-m_1)R_1+(d-m_2)R_2+(d-m_3)R_3$$
To conclude it is enough to observe that $d-m_i\geq 0$ for any $i=1,2,3$, otherwise by Bezout's theorem the line $L_i$ would be an irreducible component of $C$.
\end{proof}

We will need the following observation.

\begin{Remark}\label{delP}
By \cite[Sections 6.1 and 6.4]{Has} the moduli space of weighted pointed rational curves $\overline{M}_{0,A[5]}$ with weights $A[5] = (1,1,1/3,1/3,1/3)$ can be constructed as the blow-up $Bl_{p_1,p_2,p_3}\mathbb{P}^2$ of $\mathbb{P}^2$ at three general points $p_1,p_2,p_3\in\mathbb{P}^2$. Such moduli space admits three morphisms $\pi_i:\overline{M}_{0,A[5]}\rightarrow\overline{M}_{0,4}\cong\mathbb{P}^1$ given by forgetting one of the last three marked points. These morphisms are the lifting of the projections $\mathbb{P}^2\dasharrow\mathbb{P}^1$ with center in one of the three blown-up points. Now, by \cite[Proposition 2.2]{MaM17} any dominant morphism $\overline{M}_{0,A[5]}\rightarrow\overline{M}_{0,4}\cong\mathbb{P}^1$ factors through one of the $\pi_i$'s. Note that \cite[Proposition 2.2]{MaM17} is stated for morphisms with connected fibers $\overline{M}_{0,A[5]}\rightarrow\overline{M}_{0,4}\cong\mathbb{P}^1$. However, to get the result for any dominant morphism $f:\overline{M}_{0,A[5]}\rightarrow\mathbb{P}^1$ it is enough to consider the Stein factorization $h:\overline{M}_{0,A[5]}\rightarrow C$ of $f:\overline{M}_{0,A[5]}\rightarrow\mathbb{P}^1$, and the normalization $\nu:\widetilde{C}\rightarrow C$ of the curve $C$, as in the proofs of \cite[Lemma 3.5 and Corollary 3.8]{BM}.

In other words any morphism $X\rightarrow\mathbb{P}^1$, where $X$ is a del Pezzo surface of degree six given by blowing-up three general points in $\mathbb{P}^2$, factors through one of the three morphisms induced by the linear projection from the blown-up points. 

I believe that this result on fibrations of degree six del Pezzo surfaces has been known for a long time but I could not find a classical reference in the literature.  
\end{Remark} 

The alternative description of $\mathbb{P}^1[3]$ in Lemma \ref{lemmaFano1} and Remark \ref{delP} are helpful in order to classify the base point free pencils on $\mathbb{P}^1[3]$.

\begin{Lemma}\label{crem}
Let $\psi:\mathbb{P}^1[3]\rightarrow\mathbb{P}^1$ be a dominant morphism. Then $f$ factors through an evaluation map.
\end{Lemma}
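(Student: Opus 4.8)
The plan is to transplant the numerical argument used for $(\mathbb{P}^1)^n$ in Lemma \ref{fibg2} to the blow-up description $\mathbb{P}^1[3]\cong Bl_{L_1,L_2,L_3}\mathbb{P}^3$ of Lemma \ref{lemmaFano1}. First I set $D=\psi^{*}\mathcal{O}_{\mathbb{P}^1}(1)$. As the pull-back of an ample class, $D$ is nef; and if $p\neq q$ are two points of $\mathbb{P}^1$, the fibres $\psi^{-1}(p)$ and $\psi^{-1}(q)$ are disjoint effective divisors both numerically equivalent to $D$, so that $D^{2}=[\psi^{-1}(p)]\cdot[\psi^{-1}(q)]=0$ as a cycle class in $A^{2}(\mathbb{P}^1[3])$. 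This is exactly the analogue of the vanishing $D_p\cdot D_q=0$ exploited in Lemma \ref{fibg2}, and it holds for an arbitrary dominant $\psi$, with no connectedness hypothesis on the fibres.

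Next I write $D\equiv a\widetilde{H}-b_1E_1-b_2E_2-b_3E_3$ in the basis of $\Pic(\mathbb{P}^1[3])$ coming from Lemma \ref{lemmaFano1}, and record the intersection table $\widetilde{H}^{3}=1$, $\widetilde{H}^{2}E_i=0$, $\widetilde{H}E_i^{2}=-1$, $E_i^{3}=-2$, together with $E_iE_j=0$ for $i\neq j$ (the $L_i$ being skew); the value $E_i^{3}=-2$ is pinned down by the fact that the square-zero nef class $\widetilde{H}-E_i$ must satisfy $(\widetilde{H}-E_i)^{2}\cdot E_i=0$, and similarly one gets $\widetilde{H}\cdot\sigma_i=1$, $E_i\cdot\sigma_i=1$. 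Pairing $D$ against the Mori cone generators of Lemma \ref{MoriCone} then gives $D\cdot R_i=b_i\geq 0$ and $D\cdot\sigma_i=a-b_i\geq 0$, so $0\leq b_i\leq a$. Expanding the cycle identity $D^{2}=0$ against $\widetilde{H}$ and against each $E_i$ yields the two numerical relations
\[
a^{2}=b_1^{2}+b_2^{2}+b_3^{2},\qquad b_i(a-b_i)=0\quad(i=1,2,3).
\]

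The second relation forces each $b_i\in\{0,a\}$; substituting into the first and using $a>0$ (dominance) shows that exactly one $b_i$ equals $a$ and the other two vanish, so $D\equiv a(\widetilde{H}-E_i)$ for a single index $i$. By Lemma \ref{lemmaFano1} the class $\widetilde{H}-E_i$ is the pull-back of $\mathcal{O}_{\mathbb{P}^1}(1)$ under the projection of $\mathbb{P}^3$ away from the line $L_i$, which under the identification $\mathbb{P}^1[3]\cong Bl_{\Delta_{1,2,3}}(\mathbb{P}^1)^3$ of Proposition \ref{sym} is precisely the $i$-th evaluation map $ev_i$. To upgrade this equality of numerical classes into an honest factorization of $\psi$, I use that $ev_i$ has connected fibres, hence $(ev_i)_{*}\mathcal{O}_{\mathbb{P}^1[3]}=\mathcal{O}_{\mathbb{P}^1}$ and, by the projection formula, $(ev_i)_{*}\mathcal{O}\big(a(\widetilde{H}-E_i)\big)=\mathcal{O}_{\mathbb{P}^1}(a)$. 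Thus every section of $D$ is pulled back along $ev_i$, the pencil defining $\psi$ consists of fibres of $ev_i$, and $\psi=\phi\circ ev_i$ for some $\phi\colon\mathbb{P}^1\to\mathbb{P}^1$.

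The main obstacle is the bookkeeping of the intersection numbers on $\mathbb{P}^1[3]$, in particular fixing the self-intersections $E_i^{3}$ and $E_i\cdot\sigma_i$ with the correct signs; I avoid a direct Grothendieck-relation computation by testing against the already-known square-zero nef classes $\widetilde{H}-E_i$. The only other delicate point is this very last step, passing from proportional fibre classes to a genuine factorization of morphisms, which the push-forward computation with $(ev_i)_{*}\mathcal{O}=\mathcal{O}_{\mathbb{P}^1}$ resolves.
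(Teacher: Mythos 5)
Your proof is correct, but it takes a genuinely different route from the one written in the paper. The paper notes that the statement ``follows easily from the description of $\NE(\mathbb{P}^1[3])$ in Lemma \ref{MoriCone}'' but then deliberately gives a geometric proof instead: the induced rational map $\widetilde{\psi}:\mathbb{P}^3\dasharrow\mathbb{P}^1$ is restricted to a general plane $H$, where it becomes a fibration of a (blow-up of $\mathbb{P}^2$ contained in a) degree six del Pezzo surface, and Remark \ref{delP} (which rests on \cite{MaM17}) forces this restriction to factor through the projection from one of the blown-up points; hence $\widetilde{\psi}$ is constant on the fibres of some $\pi_{L_i}$ and factors through it. You instead carry out, in full, the numerical argument the paper only alludes to, transplanting the $D_p\cdot D_q=0$ trick of Lemma \ref{fibg2}: your intersection numbers on $Bl_{L_1,L_2,L_3}\mathbb{P}^3$ are all correct (the test $(\widetilde{H}-E_i)^2\cdot E_i=0$ does pin down $E_i^3=-2$, since $\widetilde{H}-E_i$ is a fibre class of $ev_i$), and the two relations $a^2=b_1^2+b_2^2+b_3^2$ and $b_i(a-b_i)=0$ extracted from $D^2=0$ indeed force $D\equiv a(\widetilde{H}-E_i)$ for a single $i$ -- in fact the Mori-cone inequalities $0\leq b_i\leq a$ are not even needed, since dominance already gives $D\not\equiv 0$ and hence $a>0$. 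Your approach buys independence from the external del Pezzo input of \cite{MaM17}, at the price of the intersection-theoretic bookkeeping and of the final upgrade from a class identity to a factorization of morphisms; the paper's approach avoids all intersection theory. Two points in your last step deserve to be made explicit: first, you pass from numerical to linear equivalence, which is legitimate here because $\mathbb{P}^1[3]$ is a smooth rational variety with $h^1(\mathcal{O})=0$ and torsion-free Picard group, so $D\sim a(\widetilde{H}-E_i)$ and the computation of $H^0$ via $(ev_i)_*$ applies; second, from ``every member of the pencil is a union of fibres of $ev_i$'' you should observe that $\psi$ therefore contracts every fibre of $ev_i$ (a connected fibre meeting $\psi^{-1}(p)$ is contained in it), and then invoke rigidity, e.g.\ \cite[Lemma 1.6]{KM} as used elsewhere in the paper, to produce the morphism $\phi$ with $\psi=\phi\circ ev_i$. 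Neither point is a genuine gap.
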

\begin{proof}
By Lemma \ref{lemmaFano1} we can identify $\mathbb{P}^1[3]$ with the blow-up of $\mathbb{P}^3$ along three skew lines $L_1,L_2,L_3$. The statement follows easily from the description of $\NE(\mathbb{P}^1[3])$ in Lemma \ref{MoriCone}. However, we will give an alternative easy and geometrical proof.\\
The morphism $\psi$ induces a rational map $\widetilde{\psi}:\mathbb{P}^3\dasharrow\mathbb{P}^1$ whose indeterminacy locus is contained in $L_1\cup L_2\cup L_3$. Let $H\subset\mathbb{P}^3$ be a general plane. Then the restriction $\widetilde{\psi}_{|H}:H\cong\mathbb{P}^2\dasharrow\mathbb{P}^1$ is a rational map whose indeterminacy locus is a finite set $S$ contained in $\{L_1\cap H, L_2\cap H, L_3\cap H\}$, and inducing a morphism $\psi_{|H}:Bl_{S}H\rightarrow\mathbb{P}^1$.\\
If $S = \{p_1,p_2,p_3\}$, that is $Bl_{S}H$ is a del Pezzo surface of degree six, then by Remark \ref{delP} $\widetilde{\psi}_{|H}$ must factor through a linear projection from one of the $p_i$'s. A fortiori the same result holds if $|S|\in\{1,2\}$.

Therefore, $\widetilde{\psi}_{|H}$ factors through the projection from one of the $p_i$'s, and hence $\widetilde{\psi}$ is constant on the fibers of the projection $\pi_{L_i}$ from one of the $L_i$'s. Then $\widetilde{\psi}$ factors through $\pi_{L_i}$. Via the isomorphism $\widetilde{\pi}:Bl_{L_1,L_2,L_3}\mathbb{P}^3\rightarrow Bl_{\Delta_{1,2,3}}(\mathbb{P}^1)^3$, this means that $\psi$ factors through the lifting of one of the three projections $(\mathbb{P}^1)^3\rightarrow\mathbb{P}^1$, that is an evaluation map.
\end{proof}

Now, we are ready to prove the main result of this section on base point free pencils on the Fulton-MacPherson compactification $\mathbb{P}^1[n]$.

\begin{Proposition}\label{propfact}
Let $\psi:\mathbb{P}^1[n]\rightarrow\mathbb{P}^1$ be a dominant morphism. Then $\psi$ factors through a morphism associated to a modular base point free pencil.
\end{Proposition}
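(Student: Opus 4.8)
The plan is to induct on $n$, with Lemma \ref{crem} providing the base case. Because the statement allows an arbitrary dominant morphism, I would first reduce to connected fibres: writing the Stein factorization $\mathbb{P}^1[n]\xrightarrow{\phi}B\xrightarrow{\eta}\mathbb{P}^1$ with $\eta$ finite and $\phi$ of connected fibres, and normalizing $B$ (forced to be $\cong\mathbb{P}^1$), one is reduced to classifying $\phi$, exactly as in the argument recalled in Remark \ref{delP} and in \cite[Lemma 3.5, Corollary 3.8]{BM}; a factorization of $\phi$ through a modular pencil yields one for $\psi$. For $n\le 2$ one has $\mathbb{P}^1[n]\cong(\mathbb{P}^1)^n$, so Lemma \ref{fibg2} (with each $g(C_i)=0$) shows $\psi$ factors through a projection, i.e. through an evaluation map, and for $n=3$ the statement is Lemma \ref{crem}. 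Thus I assume $n\ge 4$ and that the proposition holds for $n-1$.

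For the inductive step I would use the $n$ forgetful morphisms $\pi_i:\mathbb{P}^1[n]\to\mathbb{P}^1[n-1]$, whose general fibre is the smooth rational domain curve of a general stable map, and distinguish two cases. If $\psi$ contracts the general fibre of some $\pi_i$, then, $\pi_i$ being proper with connected fibres and $\mathbb{P}^1[n]$ smooth, the rigidity lemma provides a factorization $\psi=\overline{\psi}\circ\pi_i$ with $\overline{\psi}:\mathbb{P}^1[n-1]\to\mathbb{P}^1$ dominant. The inductive hypothesis makes $\overline{\psi}$ modular, and it then suffices to check that pre-composition with $\pi_i$ preserves modularity. This is direct: an evaluation $ev_j$ with $j\neq i$ satisfies $ev_j\circ\pi_i=ev_j$, while a forgetful morphism $\pi_J\circ\rho$ pulls back to $\pi_{J\cup\{i\}}\circ\rho$, since forgetting the marked point $x_i$ commutes with the morphism $\rho$ forgetting the map. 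In either case $\psi$ is associated to a modular pencil of type (\ref{evaluation}) or (\ref{forg}).

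The crux is the complementary case, in which $\psi$ has positive degree on the general fibre of every $\pi_i$; I would show that this forces $n=4$ and $\psi\cong\rho$. Writing the inducing pencil as $D\equiv D_{0,n+1}+a_1H_1+\dots+a_nH_n$ in the coordinates of Notation \ref{not}, the $H_i$ are the evaluation classes $ev_i^{*}\mathcal{O}(1)$ (cf. Remark \ref{forgeval}) while the $\overline{M}_{0,n+1}$-component $D_{0,n+1}$ occupies the $f_p^{*}$-factor and has vanishing $g_i^{*}$-coordinates. The product description of the Picard group and of the eventually-free cone in (\ref{maph}) then shows that the morphism attached to $D$ is, up to a Segre embedding, the product of the morphisms attached to the two summands, so its image is a curve only if one summand induces a constant map. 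If the evaluation summand is nonconstant, then it factors through $ev:\mathbb{P}^1[n]\to(\mathbb{P}^1)^n$, and a nef divisor $\sum a_iH_i$ defines a pencil on $(\mathbb{P}^1)^n$ only when a single $a_i$ is nonzero (Lemma \ref{fibg2}), forcing $\psi=ev_i$; but $ev_i$ contracts the fibres of $\pi_k$ for all $k\neq i$, against the standing hypothesis. Hence all $a_i=0$, $\psi$ is governed by the $\overline{M}_{0,n+1}$-direction and factors through $\rho$, and the classification of fibrations of $\overline{M}_{0,n}$ onto $\mathbb{P}^1$ (forgetful maps, via the method of Remark \ref{delP} and \cite{BM}) gives $\psi=\pi_J\circ\rho$. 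Since $\pi_J\circ\rho$ contracts the fibre of $\pi_k$ for each forgotten index $k\in J$, the standing hypothesis forces $J=\emptyset$, i.e. $n=4$ and $\psi\cong\rho$, a modular pencil of type (\ref{forg}).

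The main obstacle is this non-contracting case, and its difficulty is twofold. First, a nef class may be positive on every fibre class of the $\pi_i$, so no single contraction is available and the global shape of $\psi$ must be reconstructed from the product description (\ref{maph}); in particular one must justify rigorously that the vanishing $a_1=\dots=a_n=0$ forces $\psi$ to be constant on the fibres of $\rho$, and hence to factor through it, which is delicate because $\Pic(\overline{M}_{0,n+1})$ is strictly larger than $\rho^{*}\Pic(\overline{M}_{0,n})$. Second, one needs the classification of morphisms $\overline{M}_{0,n}\to\mathbb{P}^1$ as forgetful maps, which is precisely why the reduction to connected fibres via Stein factorization must be in place from the outset, so that both the inductive hypothesis and this classification apply.
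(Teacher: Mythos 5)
Your overall architecture (Stein reduction, then induction on $n$ via the forgetful maps $\pi_i:\mathbb{P}^1[n]\to\mathbb{P}^1[n-1]$) is genuinely different from the paper's, and the contracting case works: since $\pi_i$ is the universal-curve fibration, its fibres are connected of constant dimension one, so \cite[Lemma 1.6]{KM} gives the factorization $\psi=\overline{\psi}\circ\pi_i$, the inductive hypothesis applies to $\overline{\psi}$, and the identities $ev_j\circ\pi_i=ev_j$ and $(\pi_J\circ\rho)\circ\pi_i=\pi_{J\cup\{i\}}\circ\rho$ close that case. The first half of your non-contracting case is also salvageable, although not by the argument you give: your claim that the image of the map attached to $D=D'+D''$ is a curve ``only if one summand induces a constant map'' is false in general (take $D'\equiv D''$ inducing the same fibration). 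What is true, and suffices, is that both summands are semiample by the product description of the eventually-free cone in (\ref{maph}), so the connected fibres of $\psi$ lie inside fibres of the semiample fibrations of both $D'$ and $D''$; a dimension count then gives $\#\{i:a_i>0\}\leq 1$, and if $a_i>0$ for exactly one $i$ then $\psi$ agrees with $ev_i$ up to an automorphism of $\mathbb{P}^1$, contradicting your standing hypothesis. Hence all $a_i=0$, as you assert.

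The genuine gap is the step you yourself flag and then leave unproved: that $a_1=\dots=a_n=0$ forces $\psi$ to be constant on the fibres of $\rho$ and hence to factor through $\rho:\mathbb{P}^1[n]\to\overline{M}_{0,n}$. This cannot be waved through. The fibres of $\rho$ are three-dimensional $PGL(2)$-orbit closures containing many curves not contracted by $ev$, and since $f_p^{*}\rho^{*}\Pic(\overline{M}_{0,n})=\pi_{n+1}^{*}\Pic(\overline{M}_{0,n})$ is a proper subgroup of $\Pic(\overline{M}_{0,n+1})$, the vanishing of the $a_i$ does not place $D^{\psi}$ in $\rho^{*}\Pic(\overline{M}_{0,n})$, so there is no numerical reason for curves in $\rho$-fibres to have $D^{\psi}$-degree zero. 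In fact the statement ``semiample, all $a_i=0$, one-dimensional image $\Rightarrow$ the fibration factors through a map of type $\pi_J\circ\rho$'' is essentially the hard half of Proposition \ref{propfact} itself, so invoking the classification of fibrations of $\overline{M}_{0,n}$ \emph{after} assuming the factorization through $\rho$ is circular. The paper closes exactly this hole by a device absent from your outline: in this case $\psi$ restricts to a \emph{dominant} morphism on the embedded fibre $f_p(\overline{M}_{0,n+1})=g_n^{-1}(p)$ of the blow-up $g_n$ of Proposition \ref{sym}, so \cite[Corollary 3.8]{BM} applies on that copy of $\overline{M}_{0,n+1}$ and shows $\psi|_{\overline{M}_{0,n+1}}$ factors through a forgetful map $\pi_{i_1,\dots,i_{n-3}}:\overline{M}_{0,n+1}\to\overline{M}_{0,4}$; one then distinguishes whether the attaching point $x_{n+1}$ is among the forgotten points, uses the injectivity of (\ref{maph}) to promote equality of restrictions to an identification of $\psi$ with the global morphism $\pi_{i_1,\dots,i_{n-4}}\circ\rho$ in the first sub-case, and uses Lemma \ref{crem} to derive a contradiction in the second. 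Some such restriction-to-the-fibre argument is indispensable; without it your Case B, and with it the induction, does not close.
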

\begin{proof}
By Proposition \ref{sym} $\mathbb{P}^1[n]$ is the blow-up $g_{n}:\mathbb{P}^1[n]\rightarrow (\mathbb{P}^1)^n$ along the diagonals in order of increasing dimension. Now, let $b_1:\mathbb{P}^1[n]_1\rightarrow (\mathbb{P}^1)^n$ be the blow-up of the smallest diagonal $\Delta_{1,...,n}$, and fixed a point $p\in \Delta_{1,...,n}$ let us consider the fiber $b_1^{-1}(p)\cong\mathbb{P}^{n-2}$. Finally, let $b:\mathbb{P}^1[n]\rightarrow\mathbb{P}^1[n]_1$ be the blow-up morphism such that $b_1\circ b = g_n$.

For a proper subset $S = \{i_1,...,i_s\}$ of $\{1,...,n\}$ we will denote by $\Delta_S$ the diagonal in $(\mathbb{P}^1)^n$ where the points indexed by $S$ coincide. Let $E$ be the exceptional divisor of the first blow-up $\mathbb{P}^1[n]_1$ of $(\mathbb{P}^1)^n$ at the smallest diagonal $\Delta_{1,...,n}$, and let $\widetilde{E}$ be the boundary divisor in $\mathbb{P}^1[n]$ corresponding to $\Delta_{1,...,n}$. Now, let $F_S$ be the intersections of $E$ with the strict transforms of $\Delta_S$, for $2 \leq |S|\leq n-1$ inside $\mathbb{P}^1[n]_1$. By the symmetric construction of the Fulton-MacPherson compactification in Proposition \ref{sym} we have that $\widetilde{E}$ is then the iterated blow-up of $E$ at $F_S$ in order of increasing dimension. Now, by \cite[Lemma 4.18]{GR17} we get that the fiber of $\widetilde{E}\rightarrow\Delta_{1,...,n}$ over $p\in \Delta_{1,...,n}$ coincides
with the iterated blow-up of the fiber $E_p$ at $F_{S,p}$, where $E_p$ is the fiber of $E$ over $p\in \Delta_{1,...,n}$ and $F_{S,p}=F_S\cap E_p$, in order of increasing dimension. Indeed in the notations of \cite[Lemma 4.18]{GR17}, setting $d = 1$ and all weights in $\mathcal{A}$ equal to one, we have that $b_{|g_{n}^{-1}(p)}:g_{n}^{-1}(p)\rightarrow b_1^{-1}(p)\cong\mathbb{P}^{n-2}$ is exactly the blow-up morphism in the construction of $\overline{M}_{0,n+1}$ in \cite[Section 6.2]{Has}.

Note that the image of the morphism $f_p:\overline{M}_{0,n+1}\rightarrow\mathbb{P}^1[n]$ in Section \ref{cones} is the fiber $g_{n}^{-1}(p)\cong\overline{M}_{0,n+1}$. 
Let us assume that $\psi$ contracts $g_{n}^{-1}(p)$ for some $p\in\Delta_{1,...,n}$. Therefore, $\psi$ contracts $g_{n}^{-1}(p)$ for any $p\in\Delta_{1,...,n}$ since the fibers $g_{n}^{-1}(p)$ are all numerically equivalent. Now, let $E_S = g_{n}^{-1}(\Delta_S)$ be the exceptional divisor over $\Delta_S$, and $g_{n,S}:E_S\rightarrow\Delta_S$ be the restriction of $g_{n}$ to $E_S$. For any $p\in \Delta_{1,...,n}$ we have $g_{n,S}^{-1}(p)\subset g_{n}^{-1}(p)$. Therefore, $\psi$ contracts the fiber $g_{n,S}^{-1}(p)$.\\
Now, since $g_{n,S}:E_S\rightarrow\Delta_S$ is a morphism with connected fibers all of the same dimension, \cite[Lemma 1.6]{KM} yields that $\psi$ must contract all the fibers of $g_{n,S}:E_S\rightarrow\Delta_S$. We conclude that if $\psi$ contracts a fiber $g_{n}^{-1}(p)\cong\overline{M}_{0,n+1}$ then it must contract all the fibers of any morphism $g_{n,S}:E_S\rightarrow\Delta_S$ with $|S| = n-r$, $r \in\{1,...,n-3\}$, and hence $\psi$ factors through the blow-up morphism $g_{n}:\mathbb{P}^1[n]\rightarrow (\mathbb{P}^1)^n$, that is there exists a commutative diagram as follows
  \[
  \begin{tikzpicture}[xscale=2.1,yscale=-1.7]
    \node (A0_0) at (0, 0) {$\mathbb{P}^1[n]$};
    \node (A1_0) at (0, 1) {$(\mathbb{P}^1)^n$};
    \node (A1_1) at (1, 1) {$\mathbb{P}^1$};
    \path (A0_0) edge [->,swap]node [auto] {$\scriptstyle{g_{n}}$} (A1_0);
    \path (A1_0) edge [->]node [auto] {$\scriptstyle{}$} (A1_1);
    \path (A0_0) edge [->]node [auto] {$\scriptstyle{\psi}$} (A1_1);
  \end{tikzpicture}
  \]
Since by Lemma \ref{fibg2} any morphism $(\mathbb{P}^1)^n\rightarrow\mathbb{P}^1$ factors through a projection onto one of the factors we get that $\psi$ factors through an evaluation map.\\
Now, let us assume that $\psi$ restricts to a dominant morphism on $g_{n}^{-1}(p)\cong\overline{M}_{0,n+1}$. Following Notation \ref{not} let $D^{\psi}\equiv D_{0,n+1}^{\psi}+a_1^{\psi}H_1+...+a_n^{\psi}H_n$ be the decomposition of the divisor $D^{\psi}$ associated to the morphism $\psi:\mathbb{P}^1[n]\rightarrow\mathbb{P}^1$.\\
Note that by Section \ref{cones} the divisor $H_i$ induces the evaluation map $ev_i$. In particular, since any $ev_i$ contracts the fiber $g_{n}^{-1}(p)$ we have that $a_i^{\psi} = 0$ for any $i=1,...,n$, and $D^{\psi}\equiv D_{0,n+1}^{\psi}$.\\
By \cite[Corollary 3.8]{BM} $\psi_{|\overline{M}_{0,n+1}}$ factors through a forgetful morphism $\pi_{i_1,...,i_{n-3}}$ and a finite morphism. Now, we distinguish two cases.
\begin{itemize}
\item[-] Assume that $n+1\in\{i_1,...,i_{n-3}\}$, say $n+1 = i_{n-3}$. In this case the morphism  
$$\mathbb{P}^1[n]\cong\overline{M}_{0,n}(\mathbb{P}^1,1)\xrightarrow{\makebox[1.5cm]{$\rho$}}\overline{M}_{0,n}\xrightarrow{\pi_{i_1,...,i_{n-4}}}\overline{M}_{0,4}\cong\mathbb{P}^1$$
restricts to $\psi$ on $\overline{M}_{0,n+1}$. Furthermore, since (\ref{maph}) is an isomorphism it is the only morphism with this property, and hence $\psi$ factors though $\pi_{i_1,...,i_{n-4}}\circ\rho$.
\item[-] Now, assume that $n+1\notin\{i_1,...,i_{n-3}\}$. In this case the forgetful morphism
$$\pi_{i_1,...,i_{n-3}}:\mathbb{P}^1[n]\cong\overline{M}_{0,n}(\mathbb{P}^1,1)\rightarrow\mathbb{P}^1[3]\cong\overline{M}_{0,3}(\mathbb{P}^1,1)$$
coincides with $\psi$ on $\overline{M}_{0,n+1}$, and by (\ref{maph}) $\psi$ must factor through $\pi_{i_1,...,i_{n-3}}$, that is $\psi = \xi\circ\pi_{i_1,...,i_{n-3}}$ where $\xi:\mathbb{P}^1[3]\rightarrow\mathbb{P}^1$ is a morphism. On the other hand, by Lemma \ref{crem} the morphism $\xi$ factors through an evaluation morphism $ev_i:\mathbb{P}^1[3]\rightarrow\mathbb{P}^1$. Note that $(ev_i\circ\pi_{i_1,...,i_{n-3}}\circ \psi)(\overline{M}_{0,n+1})$ is a point for any $i$, and hence $(\xi\circ\pi_{i_1,...,i_{n-3}}\circ f)(\overline{M}_{0,n+1})$ is a point as well. A contradiction.  
\end{itemize} 
We conclude that $\psi$ factors either through an evaluation map or a morphism of the type $\pi_{i_1,...,i_{n-4}}\circ\rho$ as in (\ref{forg}).
\end{proof}

\section{On the fibrations of $X[n]$}\label{sectionFib}

Our next aim is to describe dominant morphisms $\mathbb{P}^1[n]\rightarrow\mathbb{P}^{1}[r]$ with $r\geq 2$. The case $r = 2$ is an immediate consequence of Proposition \ref{propfact}.

\begin{Proposition}\label{r=2}
Let $\psi:\mathbb{P}^1[n]\rightarrow\mathbb{P}^1[2]$ be a dominant morphism. Then $\psi$ factors through a product of two morphisms associated to modular base point free pencils.
\end{Proposition}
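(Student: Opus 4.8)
The plan is to reduce the statement to the already-proven classification of dominant morphisms $\mathbb{P}^1[n] \to \mathbb{P}^1$ in Proposition \ref{propfact}. Recall that $\mathbb{P}^1[2] \cong \mathbb{P}^1 \times \mathbb{P}^1$ by Proposition \ref{sym}. Writing $p_1, p_2 : \mathbb{P}^1 \times \mathbb{P}^1 \to \mathbb{P}^1$ for the two canonical projections, I would consider the two composites $\psi_1 := p_1 \circ \psi$ and $\psi_2 := p_2 \circ \psi$, each of which is a morphism $\mathbb{P}^1[n] \to \mathbb{P}^1$. The first point to check is that each $\psi_j$ is \emph{dominant}: since $\psi$ is dominant onto $\mathbb{P}^1 \times \mathbb{P}^1$ and the $p_j$ are surjective, each $\psi_j = p_j \circ \psi$ is dominant onto $\mathbb{P}^1$.

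Next I would apply Proposition \ref{propfact} to each $\psi_j$ separately. This yields that each $\psi_j$ factors through a morphism associated to a modular base point free pencil, i.e.\ through either an evaluation map $ev_{i_j}$ or a morphism of type $\pi_{I_j} \circ \rho$ as in (\ref{forg}). Concretely, there exist morphisms $\phi_j : \mathbb{P}^1 \to \mathbb{P}^1$ and modular morphisms $\mu_j : \mathbb{P}^1[n] \to \mathbb{P}^1$ with $\psi_j = \phi_j \circ \mu_j$ for $j = 1, 2$.

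Finally I would reassemble these two factorizations into a single factorization of $\psi$. Because $\mathbb{P}^1[2] \cong \mathbb{P}^1 \times \mathbb{P}^1$ and a morphism into a product is determined by its two component morphisms, we have $\psi = \psi_1 \times \psi_2$ under this identification. Hence $\psi = (\phi_1 \circ \mu_1) \times (\phi_2 \circ \mu_2) = (\phi_1 \times \phi_2) \circ (\mu_1 \times \mu_2)$, where $\mu_1 \times \mu_2 : \mathbb{P}^1[n] \to \mathbb{P}^1 \times \mathbb{P}^1 \cong \mathbb{P}^1[2]$ is a product of two morphisms associated to modular base point free pencils, as required.

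The argument is essentially formal once Proposition \ref{propfact} is in hand; there is no substantial obstacle. The only mild point requiring a word of care is confirming that the component maps $\psi_j$ remain dominant so that Proposition \ref{propfact} genuinely applies to each of them---but as noted this is immediate from surjectivity of the projections. I would therefore keep the write-up short, emphasizing that the case $r = 2$ reduces directly to the $r = 1$ classification via the product structure $\mathbb{P}^1[2] \cong \mathbb{P}^1 \times \mathbb{P}^1$.
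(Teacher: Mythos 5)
Your proof is correct and follows exactly the paper's own argument: identify $\mathbb{P}^1[2]$ with $\mathbb{P}^1\times\mathbb{P}^1$, compose $\psi$ with the two projections, and apply Proposition \ref{propfact} to each component. The paper's proof is just a more compressed version of the same reduction, so nothing further is needed.
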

\begin{proof}
Since $\mathbb{P}^1[2]\cong\mathbb{P}^1\times\mathbb{P}^1$, the morphism $\psi$ is completely determined by the two morphisms $\pi_i\circ\psi:\mathbb{P}^1[n]\rightarrow\mathbb{P}^1$, where the $\pi_i:\mathbb{P}^1[2]\rightarrow\mathbb{P}^1$ for $i = 1,2$ are the projections onto the factors. Now, the statement follows immediately from Proposition \ref{propfact}.
\end{proof}

When $r\geq 3$ the geometry of $\mathbb{P}^1[r]$ radically changes. This is because now inside $\mathbb{P}^{1}[r]$ there are negative divisors imposing several constraints on morphisms $\mathbb{P}^1[n]\rightarrow\mathbb{P}^{1}[r]$. Indeed, the case $r = 3$ is the first one in which we really need to blow-up a codimension two subvariety of $(\mathbb{P}^1)^r$ to construct $\mathbb{P}^{1}[r]$. This will be the leading idea for the rest of this section.

\begin{Lemma}\label{lemmaftot1}
Let $\psi:\mathbb{P}^1[n]\rightarrow\mathbb{P}^1[3]$ be a dominant morphism with connected fibers. Then for any $i\in \{1,2,3\}$ the morphism $ev_i\circ\psi:\mathbb{P}^1[n]\rightarrow\mathbb{P}^1$ factors through an evaluation morphism $ev_{j_i}:\mathbb{P}^1[n]\rightarrow\mathbb{P}^1$.
\end{Lemma}
\begin{proof}
The morphism $ev_i\circ\psi:\mathbb{P}^1[n]\rightarrow\mathbb{P}^1$ has connected fibers. Therefore, by Proposition \ref{propfact} we know that $ev_i\circ\psi$ factors either through an evaluation morphism $ev_{j_i}$ and an automorphism $\mu_i\in PGL(2)$, or through a morphism of the type $\pi_{i_1,...,i_{n-4}}\circ\rho$ in (\ref{forg}) and again an automorphism $\mu_i\in PGL(2)$. Let $\xi_i:\mathbb{P}^1[n]\rightarrow\mathbb{P}^1$ be the morphism, either of type (\ref{forg}) or of type (\ref{evaluation}), factorizing $ev_i\circ\psi$. Note that $\mu:=\mu_1\times\mu_2\times\mu_3$ is an automorphism of $(\mathbb{P}^1)^3$, and we have the following commutative diagram
\[
\begin{tikzpicture}[xscale=2.5,yscale=-1.2]
    \node (A0_0) at (0, 0) {$\mathbb{P}^1[n]$};
    \node (A0_1) at (1, 0) {$\mathbb{P}^1[3]$};
    \node (A1_0) at (0, 1) {$(\mathbb{P}^1)^3$};
    \node (A1_1) at (1, 1) {$(\mathbb{P}^1)^3$};
    \path (A0_0) edge [->]node [auto] {$\scriptstyle{\psi}$} (A0_1);
    \path (A1_0) edge [->]node [auto] {$\scriptstyle{\mu}$} (A1_1);
    \path (A0_1) edge [->]node [auto] {$\scriptstyle{ev_1\times ev_2\times ev_3}$} (A1_1);
    \path (A0_0) edge [->]node [auto,swap] {$\scriptstyle{\xi_1\times \xi_2\times \xi_3}$} (A1_0);
  \end{tikzpicture} 
\]
Recall that by Remark \ref{kfm} the morphism $ev_1\times ev_2\times ev_3$ is nothing but the blow-up morphism $\mathbb{P}^1[3]\rightarrow(\mathbb{P}^1)^3$, that is the blow-up of the diagonal $\Delta_{1,2,3}$.\\ 
Now, let $x\in \Delta_{1,2,3}\subset (\mathbb{P}^1)^3$ be a point, and let $F_x$ be the fiber of $ev_1\times ev_2\times ev_3$ over $x$. Then $\dim(F_x) = 1$, and if $\overline{F}_x$ is a component of $\psi^{-1}(F_x)$ we have $\dim(\overline{F}_x)\geq n-3+1 = n-2$. On the other hand, $\overline{F}_x$ is contracted to the point $y = \mu^{-1}(x)$ by $\xi_1\times\xi_2\times\xi_3$. Now, since $\xi_i$ is either of type (\ref{forg}) or of type (\ref{evaluation}), $\overline{F}_x\subseteq (\xi_1\times\xi_2\times\xi_3)^{-1}(y)$ and $\dim(\overline{F}_x)\geq n-2$ yield $y\in\Delta_{1,2,3}$.\\ 
Hence $\mu\in \Aut((\mathbb{P}^1)^3)$ preserves $\Delta_{1,2,3}$. For instance, let us assume $\xi_1 = ev_1$, $\xi_2 = ev_2$ and $\xi_3 = \pi_I\circ\rho$, and let $(p,q,t)\in(\mathbb{P}^1)^3$ be a point. The other cases can be worked out with similar arguments. If $p\neq q$ then a general point in $(\xi_1\times\xi_2\times\xi_3)(p,q,t)$ is a stable map of the form $[C,(\alpha^{-1}(p),\alpha^{-1}(q),x_3,...,x_n),\alpha]$ such that $\xi_3([C,(\alpha^{-1}(p),\alpha^{-1}(q),x_3,...,x_n),\alpha]) =t$. Therefore, for any set $I$ of $n-4$ indices, in the notation above, we have $\dim(\overline{F}_x)\geq n-3$. This yields $p = q$, and hence a general point in $(\xi_1\times\xi_2\times\xi_3)(p,q,t)$ is a stable map of the form $[C_1\cup C_2,(x_1,...,x_n),\alpha]$ where $x_1,x_2\in C_1$, $\alpha$ has degree zero on $C_1$ and one on $C_2$, such that $\xi_3([C_1\cup C_2,(x_1,...,x_n),\alpha]) =t$.\\ 
Now, $\dim(\overline{F}_x) = n-2$ if and only if $\{1,...,n\}\setminus I = \{1,2,i_3,i_4\}$ with $x_{i_3},x_{i_4}\in C_2$. Furthermore, in this case $\xi_3([C_1\cup C_2,(x_1,...,x_n),\alpha])\in\overline{M}_{0,4}$ is the stable curve $[C_1\cup C_2,(x_1,x_2,x_{i_3},x_{i_4})]$  which corresponds to the point of $\overline{M}_{0,4}\cong\mathbb{P}^1$ representing a $4$-pointed stable curve where $x_1$ and $x_2$ collide. Therefore $(p,q,t)\in \Delta_{1,2,3}$.\\
Now, let $E\subset\mathbb{P}^1[3]$ be the exceptional divisor on $\Delta_{1,2,3}$. Since $\mathbb{P}^1[3]$ is smooth any component of $D = \phi^{-1}(E)\subset\mathbb{P}^{1}[n]$ has codimension one. Furthermore $((\mu_1\circ\xi_1)\times(\mu_2\circ\xi_2)\times(\mu_3\circ\xi_3))(D) = \Delta_{1,2,3}$. In particular, since $\mu^{-1}(\Delta_{1,2,3}) = \Delta_{1,2,3}$, any component of $(\xi_1\times\xi_2\times\xi_3)^{-1}(\Delta_{1,2,3})$ must have codimension one in $\mathbb{P}^1[n]$.\\
For instance, if the $\xi_i$'s are all evaluation maps, say $ev_1, ev_2, ev_3$ then the irreducible components of $(\xi_1\times\xi_2\times\xi_3)^{-1}(\Delta_{1,2,3})$ are the divisors $D_{1,2,3,i_1,...,i_k}$ with $0\leq k\leq n-3$, where a general point of $D_{1,2,3,i_1,...,i_k}$ corresponds to a stable map $[C_1\cup C_2,(x_1,...,x_n),\alpha]$ with $x_1,x_2,x_3,x_{i_2},...,x_{i_k}\in C_1$, the remaining marked points are in $C_2$, and $\alpha$ has degree zero and one on $C_1$ and $C_2$ respectively.\\
Now let us assume that $\xi_1$ is a morphism of type (\ref{forg}), say $\pi_{i_1,...,i_{n-4}}\circ\rho$, while $\xi_2, \xi_3$ are the evaluation maps $ev_{1}, ev_{2}$. A general point in the exceptional divisor $E_{1,...,n}\subset\mathbb{P}^1[n]$ over the diagonal $\Delta_{1,...,n}\subset(\mathbb{P}^{1})^{n}$ represents a stable map $[C_1\cup C_2,(x_1,...,x_n),\alpha]$, where $x_{1},...,x_{n}\in C_1$, and $\alpha$ has degree zero and one on $C_1$ and $C_2$ respectively. Clearly $ev_1([C_1\cup C_2,(x_1,...,x_n),\alpha]) = ev_2([C_1\cup C_2,(x_1,...,x_n),\alpha])$. Now, the condition $(\pi_{i_1,...,i_{n-4}}\circ\rho)([C_1\cup C_2,(x_1,...,x_n),\alpha]) = [C_2,(x_{i_1},...,x_{i_4})] = \alpha(x_1) = \alpha(x_2)$ cuts out a codimension two component $L_1\subset E_{1,...,n}$ of  $((\pi_{i_1,...,i_{n-4}}\circ\rho)\times ev_1\times ev_2)^{-1}(\Delta_{1,2,3})$. A contradiction.\\
Now, let us consider the case $\xi_1 = \pi_{i_1,...,i_{n-4}}\circ\rho$, $\xi_2 = \pi_{j_1,...,j_{n-4}}\circ\rho$, and $\xi_3 = ev_1$. Consider the divisor whose general point corresponds to a stable map $[C_1\cup C_2,(x_1,...,x_n),\alpha]$, where $x_1,x_{i_1},x_{i_2},x_{j_1},x_{j_2}\in C_1$, $x_{i_3},x_{i_4},x_{j_3},x_{j_4}\in C_2$, and $\alpha$ has degree one on $C_1$ and zero on $C_2$. Clearly, $(\pi_{i_1,...,i_{n-4}}\circ\rho)([C_1\cup C_2,(x_1,...,x_n),\alpha]) = (\pi_{j_1,...,j_{n-4}}\circ\rho)([C_1\cup C_2,(x_1,...,x_n),\alpha]) = [C_1\cap C_2, (x_{i_1},...,x_{i_4})]$. On the other hand, since $x_1\in C_1$ and $\alpha$ does not contract $C_1$ the condition $ev_1([C_1\cup C_2,(x_1,...,x_n),\alpha]) = [C_1\cup C_2, (x_{i_1},...,x_{i_4})]$ defines a codimension two locus $L_2\subset\mathbb{P}^1[n]$ which is a component of $((\pi_{i_1,...,i_{n-4}}\circ\rho)\times (\pi_{j_1,...,j_{n-4}}\circ\rho)\times ev_2)^{-1}(\Delta_{1,2,3})$. A contradiction.\\
Finally, we consider the case $\xi_1 = \pi_{i_1,...,i_{n-4}}\circ\rho$, $\xi_2 = \pi_{j_1,...,j_{n-4}}\circ\rho$, and $\xi_3 = \pi_{k_1,...,k_{n-4}}\circ\rho$. The sets $I=\{i_1,...,i_{4}\}$, $J=\{j_1,...,j_{4}\}$, $K=\{k_1,...,k_{4}\}$ differ by at least one element, say $i_4\notin J\cup K$, $j_4\notin I\cup K$, $k_4\notin I\cup J$. In this case the codimension two locus $L_3\subset\mathbb{P}^1[n]$, whose general point corresponds to a curve $[C_1\cup C_2\cup C_3,(x_1,...,x_n),\alpha]$ where $
x_{i_4}\in C_2$, $x_{j_4},x_{k_4}\in C_3$ and the remaining marked points are in $C_1$, is a component of $((\pi_{i_1,...,i_{n-4}}\circ\rho)\times (\pi_{j_1,...,j_{n-4}}\circ\rho)\times (\pi_{k_1,...,k_{n-4}}\circ\rho))^{-1}(\Delta_{1,2,3})$ of codimension two. Again a contradiction. 
\end{proof}

In order to prove our main result, given two forgetful morphisms $\pi_I, \pi_J$,  we need to control the dimension of the intersection of two general fibers of $\pi_I$ and $\pi_J$.

\begin{Lemma}\label{intfib}
Fix a point $x\in\mathbb{P}^1[n]$ not lying in the exceptional locus of the blow-up morphism $\mathbb{P}^1[n]\rightarrow (\mathbb{P}^1)^n$. Let $\pi_I:\mathbb{P}^1[n]\rightarrow\mathbb{P}^{1}[r-1]$, $\pi_J:\mathbb{P}^1[n]\rightarrow\mathbb{P}^{1}[r-1]$ be two forgetful morphisms, and $F_{I,x}$, $F_{J,x}$ be the fibers through $x$ of $\pi_{I}$ and $\pi_{J}$ respectively. If $\dim(F_{I,x}\cap F_{J,x})\geq n-r$ then $|I\cap J|\geq n-r$.
\end{Lemma}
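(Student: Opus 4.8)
The plan is to reduce the intersection of the two fibres, via the blow-up morphism $g_n:\mathbb{P}^1[n]\to(\mathbb{P}^1)^n$ of Proposition \ref{sym}, to the preimage of an explicit coordinate subspace of $(\mathbb{P}^1)^n$, and then to bound the dimension of that preimage stratum by stratum. First I would use that, as recalled in the introduction, $\pi_I$ is the lifting to the blow-ups of the projection $p_{I^c}:(\mathbb{P}^1)^n\to(\mathbb{P}^1)^{I^c}$ forgetting the factors indexed by $I$ (so $|I^c|=r-1$); this gives a commutative square relating $\pi_I$, $g_n$, $g_{r-1}$ and $p_{I^c}$. Write $y=g_n(x)$. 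Since $x$ lies off the exceptional locus, $y$ has pairwise distinct coordinates, hence so does $\bar y_I:=p_{I^c}(y)$, and therefore $\bar y_I$ avoids all the diagonals blown up to form $\mathbb{P}^1[r-1]$; consequently $g_{r-1}^{-1}(\bar y_I)$ is the single reduced point $\pi_I(x)$. Chasing the square in both directions then yields $F_{I,x}=g_n^{-1}(V_I)$, where $V_I=p_{I^c}^{-1}(\bar y_I)=\{z:z_k=y_k\ \forall k\in I^c\}\cong(\mathbb{P}^1)^I$, and likewise $F_{J,x}=g_n^{-1}(V_J)$.

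Since taking preimages commutes with intersection, this gives $F_{I,x}\cap F_{J,x}=g_n^{-1}(W)$ with $W:=V_I\cap V_J=\{z:z_k=y_k\ \forall k\in(I\cap J)^c\}\cong(\mathbb{P}^1)^{I\cap J}$, a coordinate subspace of dimension $|I\cap J|$ whose frozen coordinates take the pairwise distinct values $y_k$. It then suffices to prove the upper bound $\dim g_n^{-1}(W)\leq|I\cap J|$, for then $\dim(F_{I,x}\cap F_{J,x})\geq n-r$ forces $|I\cap J|\geq n-r$ at once.

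The core step is a stratification of $(\mathbb{P}^1)^n$ by collision type. Over the open stratum of configurations with distinct coordinates $g_n$ is an isomorphism, so this part of $g_n^{-1}(W)$ has dimension $\dim W=|I\cap J|$. For a stratum $\Delta^{\circ}_{\vec S}$ indexed by a set partition with nontrivial blocks $S_1,\dots,S_t$ (each $|S_a|\geq 2$), the fibre of $g_n$ over a general point is, by the Fulton--MacPherson structure underlying Proposition \ref{sym}, the product $\prod_{a}\overline{M}_{0,|S_a|+1}$, of dimension $\sum_a(|S_a|-2)$. On the other hand, because the frozen coordinates $y_k$ are pairwise distinct, a block can meet $W\cap\Delta^{\circ}_{\vec S}$ only if it contains at most one index of $(I\cap J)^c$; in either case block $S_a$ imposes exactly $|S_a|-1$ conditions on the free coordinates, and as the blocks are disjoint these conditions are independent, giving $\dim(W\cap\Delta^{\circ}_{\vec S})=|I\cap J|-\sum_a(|S_a|-1)$. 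Hence the part of $g_n^{-1}(W)$ lying over this stratum has dimension
$$\Big(|I\cap J|-\sum_a(|S_a|-1)\Big)+\sum_a(|S_a|-2)=|I\cap J|-t\leq |I\cap J|-1.$$
Thus every boundary stratum contributes strictly less than $|I\cap J|$, the value $|I\cap J|$ being attained only over the open stratum, and I conclude $\dim(F_{I,x}\cap F_{J,x})=\dim g_n^{-1}(W)=|I\cap J|$, which is more than enough.

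The step I expect to require the most care is the dimension count over the collision strata: one must check both that the Fulton--MacPherson fibre over a general point of $\Delta^{\circ}_{\vec S}$ is exactly $\prod_a\overline{M}_{0,|S_a|+1}$ (so its dimension is $\sum_a(|S_a|-2)$), and that the distinctness of the frozen coordinates $y_k$ makes each block impose precisely $|S_a|-1$ independent conditions on $W$. It is exactly the interplay between these two counts---the exceptional fibres growing like $+(|S_a|-2)$ while the base locus $W\cap\Delta^{\circ}_{\vec S}$ shrinks like $-(|S_a|-1)$---that prevents the higher collision strata (for instance triple points, which do produce positive-dimensional exceptional fibres) from making $g_n^{-1}(W)$ jump above $\dim W$.
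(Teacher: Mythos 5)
Your proof is correct, and it takes a genuinely different route from the paper's. The paper argues entirely on the modular side: a point of $F_{I,x}\cap F_{J,x}$ is a stable map in which every marked point indexed by $I^c\cup J^c=(I\cap J)^c$ is pinned down, so the intersection parametrizes configurations of the $|I\cap J|$ remaining points and therefore has dimension exactly $|I\cap J|$; the hypothesis then forces $|I\cap J|\geq n-r$ in one line. You instead push everything down to $(\mathbb{P}^1)^n$: using that $x$ avoids the exceptional locus you identify each fiber $F_{I,x}$ with $g_n^{-1}(V_I)$ for an explicit coordinate subspace $V_I$, reduce to bounding $\dim g_n^{-1}(W)$ for $W\cong(\mathbb{P}^1)^{I\cap J}$, and then run a stratum-by-stratum count in which the growth of the Fulton--MacPherson fibers ($+\sum_a(|S_a|-2)$) is dominated by the shrinking of $W$ along each collision stratum ($-\sum_a(|S_a|-1)$), the pairwise distinctness of the frozen values $y_k$ being what guarantees each block imposes $|S_a|-1$ independent conditions. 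What your version buys is precisely the verification the paper leaves implicit: that the boundary of the compactification cannot raise the dimension of the intersection above that of the open configuration locus (indeed you show each boundary stratum contributes strictly less, by $t\geq 1$), and your argument uses only the blow-up description of Proposition \ref{sym} and the structure of the fibers of $g_n$, so it transfers verbatim to $C[n]$ for any smooth curve. What the paper's version buys is brevity: by reading the fibers of forgetful morphisms modularly it collapses your entire stratification into the single observation that only the points indexed by $I\cap J$ are free to move. Both arguments establish the sharper equality $\dim(F_{I,x}\cap F_{J,x})=|I\cap J|$, from which the stated inequality is immediate.
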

\begin{proof}
Let $|I\cap J| = n-r-k$. We may assume $I = \{1,...,n-r-k,n-r-k+1,...,n-r+1\}$ and $J = \{1,...,n-r-k,n-r+2,...,n-r+k+2\}$.\\
A point $y\in F_{I,x}\cap F_{J,x}$ represents a pointed stable map where all the marked points but $x_{1},...,x_{n-r-k}$ are fixed, that is $F_{I,x}\cap F_{J,x}$ parametrizes configurations of $n-r-k$ points in $\mathbb{P}^1$, and $\dim(F_{I,x}\cap F_{J,x}) = n-r-k$. Now, $\dim(F_{I,x}\cap F_{J,x}) = n-r-k\geq n-r$ yields $k \leq 0$.
\end{proof}

Now, we are ready to prove the main result of this section on the classification of morphisms $\mathbb{P}^1[n]\rightarrow\mathbb{P}^{1}[r]$.

\begin{Theorem}\label{thfact}
Let $\psi:\mathbb{P}^1[n]\rightarrow\mathbb{P}^{1}[r]$ be a dominant morphism with connected fibers. If $r\geq 3$ then $\psi$ factors through a forgetful morphism $\pi_I:\mathbb{P}^1[n]\rightarrow\mathbb{P}^1[r]$.
\end{Theorem}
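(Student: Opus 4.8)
The plan is to argue by induction on $r\ge 3$, reducing everything to the behaviour of the $r$ evaluation maps $ev_1,\dots,ev_r:\mathbb{P}^1[r]\to\mathbb{P}^1$ composed with $\psi$. First I would show that for every $i\in\{1,\dots,r\}$ the morphism $ev_i\circ\psi:\mathbb{P}^1[n]\to\mathbb{P}^1$ factors through an evaluation map, say $ev_i\circ\psi=\mu_i\circ ev_{j_i}$ with $\mu_i\in PGL(2)$. To see this I pick two further indices $b,c$ so that $\{i,b,c\}$ has three elements (possible since $r\ge3$) and compose $\psi$ with the forgetful morphism $\mathbb{P}^1[r]\to\mathbb{P}^1[3]$ keeping the points $i,b,c$. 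This composite is dominant and, being a composition of proper surjective morphisms with connected fibres, again has connected fibres; since $ev_i\circ\psi$ equals the corresponding evaluation on $\mathbb{P}^1[3]$ composed with it, Lemma \ref{lemmaftot1} yields the factorization through some $ev_{j_i}$, the factor $\mu_i$ being an isomorphism because both sides have connected fibres. This is precisely the step where the connected-fibres hypothesis and the blow-up geometry of $\mathbb{P}^1[3]$ are indispensable: without them $ev_i\circ\psi$ could a priori factor through a pencil of type $\pi_J\circ\rho$, as allowed by Proposition \ref{propfact}.

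Next I would check that the indices $j_1,\dots,j_r$ are pairwise distinct. If $j_a=j_b$ for $a\ne b$, then $ev_a\circ\psi=\mu_a\mu_b^{-1}\circ ev_b\circ\psi$, and since $\psi$ is dominant this forces $ev_a=\mu_a\mu_b^{-1}\circ ev_b$ on all of $\mathbb{P}^1[r]$; but then the image of the blow-down $ev=ev_1\times\cdots\times ev_r:\mathbb{P}^1[r]\to(\mathbb{P}^1)^r$ of Remark \ref{kfm} would lie in the proper subvariety $\{x_a=\mu_a\mu_b^{-1}(x_b)\}$, contradicting its surjectivity. Hence $\{j_1,\dots,j_r\}$ has $r$ elements, and I set $I=\{1,\dots,n\}\setminus\{j_1,\dots,j_r\}$, so that $|I|=n-r$ and $\pi_I:\mathbb{P}^1[n]\to\mathbb{P}^1[r]$ remembers exactly the points $j_1,\dots,j_r$.

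It remains to identify the fibres. For general $x$ (away from the exceptional locus of $\mathbb{P}^1[n]\to(\mathbb{P}^1)^n$) each $ev_{j_i}$ is constant on the fibre $F_x^\psi$, because $ev_i\circ\psi=\mu_i\circ ev_{j_i}$ with $\mu_i$ an isomorphism; therefore $F_x^\psi\subseteq\bigcap_{i=1}^r F_{ev_{j_i},x}=F_{I,x}$, where $F_{I,x}$ is the fibre of $\pi_I$ through $x$. Both $F_x^\psi$ and $F_{I,x}$ are irreducible of dimension $n-r$, so they coincide. The inductive hypothesis enters here as a consistency check: each $\pi_k\circ\psi$ factors through a forgetful morphism $\pi_{I_k}$ with $I_k=I\cup\{j_k\}$, and since $F_x^\psi$ sits inside every $F_{I_k,x}$ the intersections $F_{I_k,x}\cap F_{I_l,x}$ have dimension at least $n-r$; Lemma \ref{intfib} then gives $|I_k\cap I_l|\ge n-r$, which is exactly what rigidifies the combinatorics and confirms $\bigcap_k I_k=I$ with $|I|=n-r$.

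Finally, since $\psi$ is constant on the general fibre of $\pi_I$, I would conclude by rigidity. Consider $(\pi_I,\psi):\mathbb{P}^1[n]\to\mathbb{P}^1[r]\times\mathbb{P}^1[r]$ with image $\Gamma$; the first projection $p_1:\Gamma\to\mathbb{P}^1[r]$ is proper, birational (its general fibre is the single point $\psi(F_{I,x})$) and has connected fibres (images of the connected fibres of $\pi_I$). As $\mathbb{P}^1[r]$ is smooth, Zariski's Main Theorem makes $p_1$ an isomorphism, and $\Phi:=p_2\circ p_1^{-1}:\mathbb{P}^1[r]\to\mathbb{P}^1[r]$ satisfies $\psi=\Phi\circ\pi_I$; alternatively one applies \cite[Lemma 1.6]{KM} directly to $\pi_I$ and $\psi$. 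The hard part is the first step, ruling out the $\pi_J\circ\rho$ pencils for each $ev_i\circ\psi$, together with verifying that the sets $I_k$ assemble into a single common core $I$ of the correct size $n-r$; once these are established the factorization through $\pi_I$ is formal.
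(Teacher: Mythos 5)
Your proof is correct in substance and reaches the factorization by a genuinely different route than the paper. The paper argues by induction on $r$: the base case $r=3$ is Lemma \ref{lemmaftot1} followed by lifting the diagonal-preserving automorphism $\mu$ of $(\mathbb{P}^1)^3$ to an automorphism of $\mathbb{P}^1[3]$, and the inductive step composes $\psi$ with two distinct forgetful morphisms $\pi_i,\pi_j:\mathbb{P}^1[r]\to\mathbb{P}^1[r-1]$, applies the inductive hypothesis to each, and then needs Lemma \ref{intfib} to show that the two resulting index sets share a common core of exactly $n-r$ elements, before finishing with the dimension count and the rigidity lemma \cite[Lemma 1.6]{KM}. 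You bypass the induction entirely: composing $\psi$ with forgetful morphisms $\mathbb{P}^1[r]\to\mathbb{P}^1[3]$ (legitimate, since a composition of proper surjections with connected fibers again has connected fibers, a point you rightly flag and which should be checked), you apply Lemma \ref{lemmaftot1} once for each $i$ to get $ev_i\circ\psi=\mu_i\circ ev_{j_i}$ with $\mu_i\in PGL(2)$; your distinctness argument for the $j_i$'s, via surjectivity of the blow-down $ev:\mathbb{P}^1[r]\to(\mathbb{P}^1)^r$, is correct and in fact makes explicit something the paper leaves implicit in its base case. Handling all $r$ evaluations at once lets you identify the general fiber of $\psi$ with the general fiber of $\pi_I$ directly, so Lemma \ref{intfib} becomes, as you note, only a consistency check. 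What each approach buys: the paper's induction needs only the two-set fiber combinatorics of Lemma \ref{intfib} at each stage, while your argument is shorter and makes the index set $I$ appear in one stroke, at the price of the connected-fibers-of-compositions verification and the explicit identification $\bigcap_i F_{ev_{j_i},x}=F_{I,x}$ for general $x$ (valid because $ev$ is an isomorphism away from the diagonals). Both proofs end identically, with rigidity.

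One step in your final paragraph is wrong as stated: Zariski's Main Theorem does not make $p_1:\Gamma\to\mathbb{P}^1[r]$ an isomorphism merely because it is proper, birational and has connected fibers --- a blow-down has all three properties. ZMT needs quasi-finiteness, and finiteness of all fibers of $p_1$ is precisely the assertion that $\psi$ contracts every fiber of $\pi_I$, i.e.\ the very thing rigidity is needed for; so that variant is circular. Your alternative conclusion, applying \cite[Lemma 1.6]{KM} directly to $\pi_I$ and $\psi$, using that the fibers of $\pi_I$ are connected and all of dimension $n-r$, is the correct one and is exactly how the paper concludes; with that choice your proof is complete.
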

\begin{proof}
Let us begin with the case $r = 3$. Keeping in mind that by Remark \ref{forgeval} we may identify evaluation maps $ev_i:\mathbb{P}^{1}[n]\rightarrow\mathbb{P}^{1}$ with forgetful morphisms forgetting $n-1$ marked points, by Lemma \ref{lemmaftot1} we get the following commutative diagram  
 \[
  \begin{tikzpicture}[xscale=4.5,yscale=-1.2]
    \node (A0_0) at (0, 0) {$\mathbb{P}^1[n]$};
    \node (A0_1) at (1, 0) {$\mathbb{P}^1[3]$};
    \node (A1_1) at (1, 1) {$(\mathbb{P}^{1})^3$};
    \path (A0_0) edge [->]node [auto] {$\scriptstyle{\psi}$} (A0_1);
    \path (A0_1) edge [->]node [auto] {$\scriptstyle{ev_{1}\times ev_{2}\times ev_3}$} (A1_1);
    \path (A0_0) edge [->]node [auto,swap] {$\scriptstyle{\mu\circ(\pi_{I}\times\pi_{J}\times \pi_{K})}$} (A1_1);
  \end{tikzpicture}
  \]
where $\mu$ is an automorphism of $(\mathbb{P}^1)^3$ preserving the diagonal $\Delta_{1,2,3}$, and we may assume $I = \{2,...,n\}$, $J = \{1,3,...,n\}$, $K = \{1,2,4,...,n\}$. Since $\mu$ preserves the diagonal it lifts to an automorphism $\overline{\mu}:\mathbb{P}^{1}[3]\rightarrow\mathbb{P}^{1}[3]$. Therefore, the morphism $\overline{\mu}\circ\pi_{4,...,n}:\mathbb{P}^1[n]\rightarrow\mathbb{P}^{1}[3]$ and the morphism $\psi$ coincide on a dense open subset of $\mathbb{P}^{1}[n]$, and hence they are the same morphism. This means that $\psi = \overline{\mu}\circ\pi_{4,...,n}$.\\
Now, we proceed by induction on $r\geq 4$. Let us consider two different forgetful morphisms $\pi_i,\pi_j:\mathbb{P}^{1}[r]\rightarrow\mathbb{P}^1[r-1]$. Since $r-1\geq 3$ by induction hypothesis we have the following diagram
\[
  \begin{tikzpicture}[xscale=3.5,yscale=-1.2]
    \node (A0_1) at (1, 0) {$\mathbb{P}^{1}[r-1]$};
    \node (A0_2) at (2, 0) {$\mathbb{P}^{1}[r-1]$};
    \node (A1_0) at (0, 1) {$\mathbb{P}^{1}[n]$};
    \node (A1_1) at (1, 1) {$\mathbb{P}^{1}[r]$};
    \node (A2_1) at (1, 2) {$\mathbb{P}^{1}[r-1]$};
    \node (A2_2) at (2, 2) {$\mathbb{P}^{1}[r-1]$};
    \path (A1_0) edge [->,swap]node [auto] {$\scriptstyle{\pi_J}$} (A2_1);
    \path (A2_1) edge [->]node [auto] {$\scriptstyle{}$} (A2_2);
    \path (A1_1) edge [->,swap]node [auto] {$\scriptstyle{\pi_i}$} (A0_2);
    \path (A1_0) edge [->]node [auto] {$\scriptstyle{\pi_I}$} (A0_1);
    \path (A1_0) edge [->]node [auto] {$\scriptstyle{\psi}$} (A1_1);
    \path (A1_1) edge [->]node [auto] {$\scriptstyle{\pi_j}$} (A2_2);
    \path (A0_1) edge [->]node [auto] {$\scriptstyle{}$} (A0_2);
  \end{tikzpicture}
\]
In the notation of Lemma \ref{intfib} let $x\in\mathbb{P}^{1}[n]$ be a general point, and $F_{I,x}$, $F_{J,x}$ be the fibers of $\pi_I$, $\pi_J$ through $x$. Note that the fiber $F_{\psi,x}$ of $\psi$ through $x$ is contained in the intersection $F_{I,x}\cap F_{J,x}$. Therefore, Lemma \ref{intfib} yields that $\pi_I$ and $\pi_J$ forget $n-r$ common marked points. If $\pi_{I\cap J}:\mathbb{P}^1[n]\rightarrow\mathbb{P}^1[r]$ is the morphism forgetting these $n-r$ common points we have
$$F_{\psi,x}\subseteq F_{I,x}\cap F_{J,x} = F_{I\cap J,x}$$
Since $\dim(F_{\psi,x}) = n-r = \dim(F_{I\cap J,x})$ we get $F_{\psi,x} = F_{I\cap J,x}$. This means that $\psi$ contracts to a point the general fiber of $\pi_{I\cap J}$. Finally, since $\pi_{I\cap J}$ is a morphism with connected fibers all of the same dimension, \cite[Lemma 1.6]{KM} yields that $\psi$ contracts to a point all the fibers of $\pi_{I\cap J}$. This means that given a point $y\in\mathbb{P}^{1}[r]$ we have that $z = \psi(\pi_{I\cap J}^{-1}(y))$ is a point as well, and we get the commutative diagram
  \[
  \begin{tikzpicture}[xscale=2.2,yscale=-1.4]
    \node (A0_0) at (0, 0) {$\mathbb{P}^1[n]$};
    \node (A1_0) at (0, 1) {$\mathbb{P}^1[r]$};
    \node (A1_1) at (1, 1) {$\mathbb{P}^1[r]$};
    \path (A0_0) edge [->,swap]node [auto] {$\scriptstyle{\pi_{I\cap J}}$} (A1_0);
    \path (A1_0) edge [->]node [auto] {$\scriptstyle{\nu}$} (A1_1);
    \path (A0_0) edge [->]node [auto] {$\scriptstyle{\psi}$} (A1_1);
  \end{tikzpicture}
  \]
where $\nu:\mathbb{P}^{1}[r]\rightarrow\mathbb{P}^{1}[r]$ is defined by $\nu(y) = z$.
\end{proof}

A small improvement is at hand. 

\begin{Corollary}\label{corfactprod}
Let $\psi:\mathbb{P}^1[n]\rightarrow\mathbb{P}^{1}[r_1]\times ...\times\mathbb{P}^{1}[r_k]$ be a dominant morphism with connected fibers. Then $\psi$ factors through a product of forgetful morphisms of type $\pi_I$ (\ref{forg1}) and $\rho\circ\pi_J$ (\ref{forg}). Furthermore, if $r_i\geq 3$ for any $i = 1,...,k$ then $\psi$ factors through a product of forgetful morphisms of type $\pi_I$ (\ref{forg1}) only.
\end{Corollary}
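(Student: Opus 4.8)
The plan is to reduce to the individual factors by composing $\psi$ with the projections of the product, and then to invoke the single-target results already proved in this section. Write $p_i:\mathbb{P}^1[r_1]\times\cdots\times\mathbb{P}^1[r_k]\rightarrow\mathbb{P}^1[r_i]$ for the $i$-th projection and set $\psi_i:=p_i\circ\psi$. First I would observe that $\psi$ is surjective: since $\mathbb{P}^1[n]$ is projective, hence complete, the image $\psi(\mathbb{P}^1[n])$ is closed, and being dominant it is dense, so it is all of the target. Consequently each $\psi_i:\mathbb{P}^1[n]\rightarrow\mathbb{P}^1[r_i]$ is surjective, and in particular dominant.

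The key point I would establish is that each $\psi_i$ has connected fibers, as this is what licenses the application of Theorem \ref{thfact}. The fiber $\psi_i^{-1}(y)$ equals $\psi^{-1}(p_i^{-1}(y))$, and $p_i^{-1}(y)\cong\prod_{j\neq i}\mathbb{P}^1[r_j]$ is a product of irreducible varieties, hence connected. I would then use the standard fact that a proper surjection with connected fibers pulls connected sets back to connected sets: if $\psi^{-1}(S)=A\sqcup B$ were a decomposition into disjoint nonempty closed subsets, each connected fiber $\psi^{-1}(s)$ with $s\in S$ would lie entirely in $A$ or in $B$; since $\psi$ is proper (hence closed) the images $\psi(A),\psi(B)$ are closed, disjoint, and cover $S$, contradicting the connectedness of $S$. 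Applying this with the closed connected set $S=p_i^{-1}(y)$ shows $\psi_i^{-1}(y)$ is connected.

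With connectedness secured I would dispatch each factor according to the size of $r_i$. If $r_i=1$, then $\psi_i:\mathbb{P}^1[n]\rightarrow\mathbb{P}^1$ factors through a modular base point free pencil by Proposition \ref{propfact}, i.e. through a morphism of type $\pi_I$ (an evaluation map, by Remark \ref{forgeval}) or of type $\rho\circ\pi_J$ as in (\ref{forg}). If $r_i=2$, then Proposition \ref{r=2} factors $\psi_i$ through a product of two such modular pencils. If $r_i\geq 3$, then the connected-fiber property just proved lets me invoke Theorem \ref{thfact}, so $\psi_i$ factors through a forgetful morphism $\pi_{I_i}$ of type (\ref{forg1}). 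Since a morphism into a product is exactly the datum of its components, assembling the factorizations of $\psi_1,\dots,\psi_k$ exhibits $\psi$ as a product of forgetful morphisms of type $\pi_I$ and $\rho\circ\pi_J$ followed by automorphisms or finite maps on the factors. When every $r_i\geq 3$, only Theorem \ref{thfact} is used and each component is a genuine forgetful morphism $\pi_{I_i}$, which yields the sharper conclusion.

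The only genuine obstacle is the connectedness of the fibers of the projected maps $\psi_i$, since Theorem \ref{thfact} cannot be applied to a factor with $r_i\geq 3$ without it; once the surjectivity of $\psi$ is recorded this follows from the elementary closedness argument above. The remaining steps are bookkeeping, the main care being to track the correct type ($\pi_I$ versus $\rho\circ\pi_J$) of each component and to note that Proposition \ref{propfact} and Proposition \ref{r=2} require only dominance, so no connectedness input is needed for the factors with $r_i\leq 2$.
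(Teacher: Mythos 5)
Your proposal is correct and follows essentially the same route as the paper, whose entire proof is to compose $\psi$ with the projections onto the factors and apply Propositions \ref{propfact}, \ref{r=2}, and Theorem \ref{thfact}. The one thing you do beyond the paper is to spell out why each composition $p_i\circ\psi$ inherits connected fibers (via the properness/closedness argument on preimages of the connected sets $p_i^{-1}(y)$), a hypothesis-check that the paper leaves implicit but that is indeed needed before invoking Theorem \ref{thfact}; this is a welcome clarification rather than a different approach.
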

\begin{proof}
It is enough to compose $\psi$ with projections onto the factors, and to apply Propositions \ref{propfact}, \ref{r=2}, and Theorem \ref{thfact}.
\end{proof}

Finally, we extend the main results in Sections \ref{LinPenc} and \ref{sectionFib} for varieties not containing rational curves.

\begin{Lemma}\label{noratcurves}
Let $X$ be a smooth projective variety not containing any rational curve, and $\psi:X[n]\rightarrow X^r$ be a dominant morphism. Then $\psi$ factors through the blow-up morphism $g_n:X[n]\rightarrow X^n$.  
\end{Lemma}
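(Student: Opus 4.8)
The plan is to exploit the hypothesis on $X$ to force $\psi$ to contract every fibre of the blow-up morphism $g_n$, and then to conclude by a rigidity argument. The starting observation is that the target $X^r$ contains no rational curve either: any morphism $\mathbb{P}^1\to X^r$ composed with a projection $X^r\to X$ would be a morphism $\mathbb{P}^1\to X$, hence constant by hypothesis, so every such map to $X^r$ is constant. Thus any morphism from a variety covered by rational curves into $X^r$ must contract all of those rational curves.

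The heart of the argument is then to understand the fibres of $g_n\colon X[n]\to X^n$. By Proposition \ref{sym} the morphism $g_n$ is a composition of blow-ups of \emph{smooth} varieties along \emph{smooth} centres, so each elementary blow-up has fibres that are either a point or a projective space $\mathbb{P}^{c-1}$. Over a point $p$ lying off all the diagonals $\Delta_S$ the fibre $g_n^{-1}(p)$ is a single (reduced) point, while over a point of a diagonal stratum the fibre is the corresponding Fulton--MacPherson screen, which is a rational projective variety built as an iterated blow-up of products of projective spaces. In particular every fibre $F=g_n^{-1}(p)$ is rationally chain connected. Given $F$, any two of its points are joined by a chain of rational curves; since $\psi$ maps $X[n]$ into $X^r$, which contains no rational curve, $\psi$ contracts each curve of the chain, so $\psi$ is constant along the whole chain. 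Hence $\psi|_F$ is constant, i.e. $\psi$ contracts every fibre of $g_n$ to a point.

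To finish I would factor. Being a composition of blow-ups onto the smooth (hence normal) variety $X^n$, the morphism $g_n$ is proper and surjective with connected fibres and $(g_n)_{*}\mathcal{O}_{X[n]}=\mathcal{O}_{X^n}$. Since $\psi$ contracts every fibre of $g_n$, a standard rigidity argument yields a unique morphism $\phi\colon X^n\to X^r$ with $\psi=\phi\circ g_n$: set-theoretically $\phi(q)=\psi(g_n^{-1}(q))$, and this is a morphism precisely because $(g_n)_{*}\mathcal{O}_{X[n]}=\mathcal{O}_{X^n}$. This is the asserted factorisation (and $\phi$ is automatically dominant since $\psi$ is).

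The step I expect to be the main obstacle is the description of the fibres of $g_n$ and, specifically, establishing that they are rationally chain connected. Everything else — the absence of rational curves in $X^r$ and the rigidity/factorisation — is formal once that input is in place. I would secure it either by identifying the positive-dimensional fibres explicitly with Fulton--MacPherson screens (hence rational) from \cite{FM}, or by an induction on the number of blow-ups in Proposition \ref{sym}, using that a proper surjective family with rationally chain connected base and rationally chain connected fibres is again rationally chain connected; the only delicate point there is the lifting of chains of rational curves through each elementary blow-up, which is harmless because the centres are smooth and the exceptional fibres are projective spaces.
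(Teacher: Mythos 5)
Your proposal is correct and follows essentially the same route as the paper: the paper likewise notes that the fibers of $g_n$ are rational varieties, deduces from the absence of rational curves in $X$ (after reducing to $r=1$ via the projections $X^r\rightarrow X$, the mirror image of your observation that $X^r$ contains no rational curve) that $\psi$ must contract every fiber of $g_n$, and then concludes the factorization. Your version is if anything slightly more careful, since phrasing the fiber condition as rational chain connectedness handles possibly reducible fibers, and you make the final rigidity step via $(g_n)_{*}\mathcal{O}_{X[n]}\cong\mathcal{O}_{X^n}$ explicit where the paper leaves it implicit.
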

\begin{proof}
It is enough to prove the claim in the case $r = 1$. If $r\geq 2$ we just consider the composition of $\psi$ with the projections $X^r\rightarrow X$.\\
So, let $\psi:X[n]\rightarrow X$ be a dominant morphism, $x\in X^n$ a point, and $F_x = g_n^{-1}(x)$. Then $F_x$ is a rational variety. Assume that $F_x$ has positive dimension and that $\psi$ does not contract $F_x$ to a point in $X$. Then $\psi(F_x)\subseteq X$ is a rationally connected variety of positive dimension. A contradiction, since by hypothesis $X$ does not contain rational curves. Therefore, $\psi$ must contract any fiber of $g_n$. 
\end{proof}

It is straightforward to prove an analogue of Corollary \ref{corfactprod} for $C[n]$ where $C$ is a smooth projective curve with $g(C)\geq 2$. We will denote by $\pi_I:C[n]\rightarrow C[r]$ the forgetful morphisms. These are just the liftings of the projections $C^n\rightarrow C^r$.

\begin{Proposition}\label{propfactg2}
Let $C$ be a smooth projective curve of genus $g(C)\geq 2$, and $\psi:C[n]\rightarrow C[r_1]\times ...\times C[r_k]$ be a dominant morphism with connected fibers. Then $\psi$ factors through a product of forgetful morphisms. 
\end{Proposition}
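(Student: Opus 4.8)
The plan is to reduce to a single factor, descend the morphism to the Cartesian products through the blow-down $g_n$, and then push the resulting coordinatewise decomposition back up to $C[r]$. First I would observe that $\psi$ is proper, hence surjective, and that composing with a projection $\mathrm{pr}_i$ produces a dominant morphism $\psi_i=\mathrm{pr}_i\circ\psi:C[n]\to C[r_i]$. Since $\psi$ has connected fibers and each slice $\{y\}\times\prod_{j\neq i}C[r_j]$ is connected, the fibers of $\psi_i$ are preimages of connected sets under the proper surjection $\psi$ with connected fibers, and are therefore connected. Thus it suffices to prove that a dominant morphism $\psi:C[n]\to C[r]$ with connected fibers factors as $\overline{\beta}\circ\pi_I$ for a forgetful morphism $\pi_I$ and an automorphism $\overline{\beta}$ of $C[r]$; assembling these factorizations over the factors yields the statement, the product of the $\pi_{I_i}$ being a product of forgetful morphisms.

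Next I would descend to the products. Since $g(C)\geq 2$ the curve $C$ contains no rational curve, so Lemma \ref{noratcurves} applies to the dominant morphism $g_r\circ\psi:C[n]\to C^r$ and gives $g_r\circ\psi=\phi\circ g_n$ for a dominant $\phi:C^n\to C^r$. Applying Lemma \ref{fibg2} in the case $g(B_i)\geq 2$, the morphism $\phi$ factors as $\phi=(f_{i_1}\times\dots\times f_{i_r})\circ\mathrm{pr}_{i_1,\dots,i_r}$ with each $f_{i_j}:C\to C$ nonconstant, hence finite surjective. Writing $\pi_I:C[n]\to C[r]$ for the forgetful morphism keeping the slots $i_1,\dots,i_r$ (the lift of $\mathrm{pr}_{i_1,\dots,i_r}$, so that $\mathrm{pr}_{i_1,\dots,i_r}\circ g_n=g_r\circ\pi_I$), this reads $g_r\circ\psi=\beta\circ g_r\circ\pi_I$ with $\beta=f_{i_1}\times\dots\times f_{i_r}$.

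Before lifting I would pin down the $f_{i_j}$. For a pair of slots $a,b$ compose with the forgetful morphism $q:C[r]\to C[2]\cong C^2$; then $q\circ\psi$ is dominant with connected fibers, and it equals $\big(x\mapsto(f_{i_a}(x_{i_a}),f_{i_b}(x_{i_b}))\big)\circ g_n$. Dominance onto $C^2$ forces $i_a\neq i_b$, since otherwise the image would be the curve $\{(f_{i_a}(t),f_{i_b}(t))\}$; and for distinct indices a general fiber has exactly $\deg f_{i_a}\cdot\deg f_{i_b}$ connected components, so connectedness of the fibers forces $\deg f_{i_a}=\deg f_{i_b}=1$. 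Hence the kept indices are distinct and every $f_{i_j}\in\Aut(C)$.

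The hard part, and the genuinely delicate point (this is exactly where the referee-flagged subtlety of Proposition \ref{propfact} resurfaces), is to show the $f_{i_j}$ are all equal when $r\geq 3$. For a triple of slots $a,b,c$, composing with the forgetful morphism to $C[3]$ gives $\psi':C[n]\to C[3]$ dominant with connected fibers and $g_3\circ\psi'=\beta_0\circ\mathrm{pr}_{a,b,c}\circ g_n$, where $\beta_0=f_a\times f_b\times f_c$. Since $C[3]=\mathrm{Bl}_{\Delta_{1,2,3}}C^3$, the exceptional divisor $E'$ over $\Delta_{1,2,3}$ is $g_3^{-1}(\Delta_{1,2,3})$, so $(\psi')^{-1}(E')=g_n^{-1}(W)$ with $W=\{x_b=g(x_a),\,x_c=h(x_a)\}$, $g=f_b^{-1}f_a,\,h=f_c^{-1}f_a\in\Aut(C)$, a subvariety of codimension two in $C^n$. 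On the other hand $(\psi')^{-1}(E')$ is the support of the pullback of the Cartier divisor $E'$ by the dominant morphism $\psi'$ to the smooth variety $C[3]$, hence nonempty of pure codimension one. As $g_n$ only contracts the exceptional divisors $E_S$ over the diagonals $\Delta_S$ with $|S|\geq 3$, any codimension-one component of $g_n^{-1}(W)$ must be such an $E_S$ with $\Delta_S\subseteq W$; a direct check of the conditions $x_b=g(x_a)$ and $x_c=h(x_a)$ along $\Delta_S$ shows this forces $a,b,c\in S$ and $g=h=\mathrm{id}$, i.e. $f_a=f_b=f_c$. Running over all triples, all $f_{i_j}$ coincide with a single $f\in\Aut(C)$, so $\beta=f^{\times r}$ preserves every diagonal and lifts to $\overline{\beta}\in\Aut(C[r])$ with $g_r\circ\overline{\beta}=\beta\circ g_r$; then $g_r\circ\psi=g_r\circ(\overline{\beta}\circ\pi_I)$ and the birationality of $g_r$ give $\psi=\overline{\beta}\circ\pi_I$. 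The cases $r\leq 2$ need no triple argument: for $r=2$ one has $\beta\in\Aut(C^2)=\Aut(C[2])$ directly, and for $r=1$ the factorization $\psi=f\circ\pi_I$ is immediate.
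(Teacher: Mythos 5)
Your proof is correct, and its skeleton coincides with the paper's: reduce to a single factor $C[r]$ by composing with the projections, descend $g_r\circ\psi$ to a dominant morphism $C^n\rightarrow C^r$ via Lemma \ref{noratcurves} (using $g(C)\geq 2$ to exclude rational curves), and split the descended map as $(f_{i_1}\times\dots\times f_{i_r})\circ\mathrm{pr}_{i_1,\dots,i_r}$ by Lemma \ref{fibg2}. Where you genuinely diverge is the final lifting step. The paper passes from this coordinatewise factorization of $g_r\circ\psi$ to the factorization of $\psi$ itself in one sentence; the implicit argument is the rigidity one used elsewhere in the paper (over a general point of $C^r$ the fiber of $g_r$ is a single point, so $\psi$ contracts the general fiber of $\pi_I$, and then \cite[Lemma 1.6]{KM} together with connectedness and normality gives $\psi=\nu\circ\pi_I$ for some morphism $\nu$), and it needs nothing about the maps $f_{i_j}$ beyond nonconstancy. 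You instead exploit the connected-fibers hypothesis twice: first to show each $f_{i_j}$ has degree one (counting the $\deg f_{i_a}\cdot\deg f_{i_b}$ components of a general fiber of the composition to $C[2]\cong C^2$), and then, via the pure codimension-one property of $(\psi')^{-1}(E')$ against the codimension-two locus $g_n^{-1}(W)$, to force all the $f_{i_j}$ to coincide; this makes $\beta=f^{\times r}$ diagonal, hence liftable to $\overline{\beta}\in\Aut(C[r])$, and birationality of $g_r$ then yields $\psi=\overline{\beta}\circ\pi_I$ on the nose. Your route is longer but buys a strictly stronger conclusion --- the residual morphism is an automorphism rather than an unspecified morphism $C[r]\rightarrow C[r]$ --- and it replaces the rigidity step the paper leaves implicit with an explicit divisorial argument; both arguments are sound, and your component-counting and codimension checks are accurate.
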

\begin{proof}
First let us consider a morphism $\psi:C[n]\rightarrow C[r]$. Let $g_r:C[r]\rightarrow C^r$ be the blow-up, and let us consider the composition $g_r\circ \psi:C[n]\rightarrow C^r$. By Lemma \ref{noratcurves} we have the following commutative diagram:
  \[
  \begin{tikzpicture}[xscale=2.5,yscale=-1.2]
    \node (A0_0) at (0, 0) {$C[n]$};
    \node (A0_1) at (1, 0) {$C[r]$};
    \node (A1_0) at (0, 1) {$C^n$};
    \node (A1_1) at (1, 1) {$C^r$};
    \path (A0_0) edge [->]node [auto] {$\scriptstyle{\psi}$} (A0_1);
    \path (A0_0) edge [->,swap]node [auto] {$\scriptstyle{g_n}$} (A1_0);
    \path (A0_1) edge [->]node [auto] {$\scriptstyle{g_r}$} (A1_1);
    \path (A1_0) edge [->]node [auto] {$\scriptstyle{\overline{\psi}}$} (A1_1);
  \end{tikzpicture}
  \]
where $\overline{\psi}:C^n\rightarrow C^r$ is a dominant morphism with connected fibers. By Lemma \ref{fibg2} $\overline{\psi}$ must factor through a product of $r$ of the projections onto the factors, and therefore $\psi$ must factor through a forgetful morphism.\\
Finally, to get the result for dominant morphisms $C[n]\rightarrow C[r_1]\times ...\times C[r_k]$ with connected fibers it is enough to compose $\psi$ with projections onto the factor as in Corollary \ref{corfactprod}.
\end{proof}

\section{On the automorphisms of $X[n]$ and $\overline{M}_{0,n}(\mathbb{P}^N,d)$}\label{autsect}
In this section, taking advantage of the results on the fibrations in Sections \ref{LinPenc} and \ref{sectionFib}, we study the automorphisms of the Fulton-MacPherson compactification $X[n]$, and of the Kontsevich moduli space $\overline{M}_{0,n}(\mathbb{P}^N,d)$ in some significant cases.

\subsection{Groups naturally acting on $\overline{M}_{0,n}(X,\beta)$}\label{actions}
Let $X$ be a homogeneous variety. The symmetric group $S_{n}$, and the connected component of the identity $\Aut^{o}(X)$ of $\Aut(X)$ act naturally on $\overline{M}_{0,n}(X,\beta)$ by
\stepcounter{Theorem}
\begin{equation}\label{actsym}
\begin{array}{ccc}
S_{n}\times\overline{M}_{0,n}(X,\beta)& \longrightarrow & \overline{M}_{0,n}(X,\beta)\\
 (\sigma,\left[C,(x_1,...,x_n),\alpha\right]) & \longmapsto & [C,(x_{\sigma(1)},...,x_{\sigma(n)}),\alpha]
\end{array}
\end{equation}
and 
\stepcounter{Theorem}
\begin{equation}\label{actcci}
\begin{array}{ccc}
\Aut^{o}(X)\times\overline{M}_{0,n}(X,\beta)& \longrightarrow & \overline{M}_{0,n}(X,\beta)\\
 (\mu,\left[C,(x_1,...,x_n),\alpha\right]) & \longmapsto & [C,(x_{1},...,x_{1}),\mu\circ\alpha]
\end{array}
\end{equation}
These groups induce automorphisms of $\overline{M}_{0,n}(X,\beta)$, and their actions commute, that is $S_n\times \Aut^{o}(X)\subseteq \Aut(\overline{M}_{0,n}(X,\beta))$. For instance, we have the following simple result.

\begin{Proposition}
The automorphisms of $\overline{M}_{0,0}(\mathbb{P}^{2},2)$ are exactly the ones induced by automorphisms of $\mathbb{P}^{2}$, that is
$$\Aut(\overline{M}_{0,0}(\mathbb{P}^{2},2))\cong PGL(3)$$
\end{Proposition}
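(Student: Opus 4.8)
The plan is to identify the variety geometrically, compute the connected component $\Aut^{o}$ by the Blanchard--Brion descent already exploited in Proposition \ref{connFM}, and then pin down the finite group of connected components. First I would recall that $\overline{M}_{0,0}(\mathbb{P}^2,2)$ is the blow-up $\Bl_{V}(\mathbb{P}^5)$ of the space of conics $\mathbb{P}^5=\mathbb{P}(\operatorname{Sym}^2(\mathbb{C}^3)^{\vee})$ along the Veronese surface $V$ of double lines, i.e. the classical space of complete conics: the natural morphism $c_1\colon X\to\mathbb{P}^5$ sending a stable map to its image conic is an isomorphism over the smooth and the reducible (rank $\leq 2$) conics, while over a double line the fibre is the $\mathbb{P}^2$ of branch (focus) data, so $c_1$ is exactly the blow-up of $V$, with exceptional divisor $E\cong\mathbb{P}(N_{V/\mathbb{P}^5})$ equal to the divisor of $2\!:\!1$ covers of lines. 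In particular $\rho(X)=2$. The post-composition action (\ref{actcci}) with $X=\mathbb{P}^2$ gives a homomorphism $PGL(3)=\Aut^{o}(\mathbb{P}^2)\to\Aut(X)$, which is faithful since a projectivity fixing every conic in $\mathbb{P}^2$ is the identity; by connectedness this yields $PGL(3)\subseteq\Aut^{o}(X)$.

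Next I would compute $\Aut^{o}(X)$. A connected algebraic group acts trivially on $N^{1}(X)\cong\mathbb{Z}^2$, hence $\Aut^{o}(X)$ fixes each extremal ray of $\NE(X)$ and in particular preserves the contraction $c_1$ together with its exceptional locus $E$. By the Blanchard \cite{Bl} and Brion \cite{Br} descent, exactly as in Proposition \ref{connFM}, the action of $\Aut^{o}(X)$ descends along $c_1$ to a subgroup of $\Aut^{o}(\mathbb{P}^5)=PGL(6)$ preserving $c_1(E)=V$. Since the Veronese spans $\mathbb{P}^5$, restriction to $V\cong\mathbb{P}^2$ identifies the stabilizer of $V$ in $PGL(6)$ with $\Aut(\mathbb{P}^2)=PGL(3)$ (every such projectivity extends via $\operatorname{Sym}^2$), and the kernel of the descent consists of automorphisms of the blow-up inducing the identity on $\mathbb{P}^5$, hence is trivial. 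Combining with the previous inclusion, $\Aut^{o}(X)\cong PGL(3)$.

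To control the discrete part, the full group $\Aut(X)$ permutes the two extremal rays $R_1,R_2$ of the two-dimensional cone $\NE(X)$, producing a homomorphism $\Aut(X)\to S_2$. Its kernel preserves $c_1$, and running the same descent argument for the full stabilizer of $c_1$ shows that this kernel equals $\Aut(\mathbb{P}^5,V)=PGL(3)=\Aut^{o}(X)$. Hence $\Aut(X)/\Aut^{o}(X)\hookrightarrow\mathbb{Z}/2$, and everything reduces to showing that no automorphism exchanges the two divisorial contractions $c_1\colon X\to\mathbb{P}^5$ and $c_2\colon X\to(\mathbb{P}^5)^{*}$, the latter contracting the boundary divisor $\Delta$ of reducible (line-pair) maps.

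This last step is the main obstacle, and it is genuinely delicate: $c_1$ and $c_2$ are the two classical blow-down structures of the complete-conics variety, and the projective duality of conics induces a lattice involution of $N^{1}(X)$ swapping the classes of $E$ and $\Delta$. Thus the exchange cannot be excluded by any numerical invariant, and the two contracted Veronese surfaces are projectively equivalent. The distinction must instead come from finer, intrinsic data: along $E$ the parametrized stable maps (double covers of lines) carry a generic automorphism group $\mathbb{Z}/2$, whereas along the generic point of $\Delta$ the reducible maps have no nontrivial automorphisms, so $E$ and $\Delta$ are not interchangeable once this moduli structure is remembered. I expect the heart of the proof to lie precisely in turning this asymmetry into a biregular statement that forbids the swap, thereby forcing $\Aut(X)/\Aut^{o}(X)$ to be trivial and giving $\Aut(\overline{M}_{0,0}(\mathbb{P}^2,2))\cong PGL(3)$.
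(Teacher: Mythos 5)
Your computation of the connected component is correct, and it is in fact more careful than the paper's own proof: the paper identifies $X=\overline{M}_{0,0}(\mathbb{P}^{2},2)$ with $\Bl_{V}\mathbb{P}^5$ and immediately concludes $\Aut(X)\cong \Aut_V(\mathbb{P}^5)\cong PGL(3)$ from the universal property of blow-ups, but that property only gives the lifting direction (automorphisms of $\mathbb{P}^5$ stabilizing $V$ lift to $X$); it does not make an arbitrary automorphism of $X$ descend along $c_1$, which is exactly the issue you isolate. The genuine gap is the step you leave open, and it cannot be closed, because the exchange of the two contractions that you hope to forbid actually occurs. Realize $X$ as the closure of the graph of adjugation, $X=\overline{\{([A],[\operatorname{adj}(A)])\,:\,\det(A)\neq 0\}}\subset \mathbb{P}^5\times(\mathbb{P}^5)^{*}$, where $A$ runs over symmetric $3\times 3$ matrices and the two factors are identified with each other by a fixed smooth reference conic. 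Since $\operatorname{adj}(\operatorname{adj}(A))=\det(A)\,A$, the linear involution $([A],[B])\mapsto ([B],[A])$ of the ambient product preserves the open graph, hence restricts to a biregular involution of $X$: the classical duality of complete conics. It exchanges the exceptional divisor $E$ of $c_1$ (rank-one $A$, i.e.\ double covers of lines) with the boundary divisor $\Delta$ (rank-one adjugate, i.e.\ line pairs), hence swaps $H_1$ and $H_2$, and so it lies neither in $\Aut^{o}(X)$ nor in the image of $PGL(3)$. The correct statement is therefore $\Aut(\overline{M}_{0,0}(\mathbb{P}^{2},2))\cong PGL(3)\rtimes \mathbb{Z}/2\mathbb{Z}$, with the involution acting on $PGL(3)$ by $g\mapsto (g^{T})^{-1}$; only the assertion about the connected component, $\Aut^{o}(X)\cong PGL(3)$, survives, and your argument proves exactly that.

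The reason your proposed mechanism fails is conceptual: the generic $\mathbb{Z}/2\mathbb{Z}$ stabilizer along $E$ versus the trivial stabilizer along $\Delta$ is data of the stack $\overline{\mathcal{M}}_{0,0}(\mathbb{P}^{2},2)$, and the coarse moduli space forgets it. A biregular automorphism of the coarse space is under no obligation to respect the modular interpretation of points, and duality is precisely such a non-modular automorphism: $E$ and $\Delta$ are smooth divisors of the smooth fivefold $X$ interchanged by it, so no intrinsic biregular invariant can separate them. Incidentally, once the involution is available, your scheme does compute the full group: any $\phi\in\Aut(X)$ either preserves the two extremal rays of $\NEbar(X)$, and then descends along $c_1$ (an extremal contraction is determined by its ray) to an element of $\Aut_V(\mathbb{P}^5)\cong PGL(3)$, or it swaps them, in which case composing with the duality involution reduces to the first case; hence $\Aut(X)\cong PGL(3)\rtimes\mathbb{Z}/2\mathbb{Z}$, and both the paper's statement and its proof need to be corrected accordingly.
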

\begin{proof}
It is well known that the space $\overline{M}_{0,0}(\mathbb{P}^{2},2)$ is isomorphic to the space of complete conics, that is the blow-up of $\mathbb{P}^{5}$ along the Veronese surface $V\subset\mathbb{P}^{5}$ parametrizing double lines \cite[Section 0.4]{FP}. Then by \cite[Corollary 7.15]{Har} the automorphism group of $\overline{M}_{0,0}(\mathbb{P}^{2},2)$ is isomorphic to the subgroup $\Aut_V(\mathbb{P}^5)\subset PGL(6)$ of automorphisms of $\mathbb{P}^5$ stabilizing $V\cong\mathbb{P}^{2}$. To conclude it is enough to observe that $\Aut_V(\mathbb{P}^5)\cong PGL(3)$.
\end{proof}

Now, our aim is to study the connected component of the identity of $\Aut(X[n])$ and $\Aut(\overline{M}_{0,n}(X,\beta))$. The central ingredient of our argument will be the following scheme-theoretic version of Blanchard's theorem \cite[Section I.1]{Bl} due to M. Brion.

\begin{Theorem}\label{brion}\cite[Proposition 2.1]{Br}
Let $G$ be a connected group scheme, $X$ a scheme with an action of $G$, and $f:X\rightarrow Y$ a proper morphism such that $f_{*}\mathcal{O}_X \cong\mathcal{O}_Y$. Then there is a unique action of $G$ on $Y$ such that $f$ is equivariant.
\end{Theorem}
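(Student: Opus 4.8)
The plan is to reduce the entire statement to a single factorization problem and then solve it with flat base change and a rigidity argument. Write $a\colon G\times X\to X$ for the given action. The key observation is that producing a $G$-action $b\colon G\times Y\to Y$ that makes $f$ equivariant is \emph{exactly} the same as factoring the morphism $\Phi:=f\circ a\colon G\times X\to Y$ through $p:=\mathrm{id}_G\times f\colon G\times X\to G\times Y$. Indeed, a factorization $\Phi=b\circ p$ unravels to $f(a(g,x))=b(g,f(x))$, which is precisely equivariance of $f$. So the whole content of the theorem is the assertion that $\Phi$ descends along $p$.

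First I would upgrade the hypothesis $f_*\mathcal{O}_X\cong\mathcal{O}_Y$ to the corresponding statement for $p$. The square with vertical maps $f$ and $p$ and horizontal projections $G\times X\to X$, $G\times Y\to Y$ is cartesian, and the bottom map $\mathrm{pr}_Y\colon G\times Y\to Y$ is flat; since $f$ is proper, flat base change gives $p_*\mathcal{O}_{G\times X}\cong\mathrm{pr}_Y^{*}f_*\mathcal{O}_X\cong\mathcal{O}_{G\times Y}$, and $p$ is proper. The general tool is then the following factorization lemma: if $p\colon V\to W$ is proper with $p_*\mathcal{O}_V\cong\mathcal{O}_W$ and $g\colon V\to Z$ is a morphism to a separated scheme that is constant on the fibers of $p$, then $g$ factors uniquely through $p$. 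Existence here is local on $W$: given an affine $Z'=\Spec A$ meeting the image of a fiber, properness of $p$ produces an open $W'\subseteq W$ with $p^{-1}(W')\subseteq g^{-1}(Z')$, and the ring map $A\to\Gamma(p^{-1}(W'),\mathcal{O})=\Gamma(W',\mathcal{O}_{W'})$ coming from $p_*\mathcal{O}=\mathcal{O}$ defines the local factorization; these glue, and uniqueness follows from surjectivity of $p$. It thus remains to verify that $\Phi$ is constant on the fibers of $p$.

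This is the main obstacle, and it is where connectedness of $G$ is indispensable. A fiber of $p$ over $(g,y)$ is $\{g\}\times X_y$ with $X_y=f^{-1}(y)$, and on it $\Phi$ is the map $x\mapsto f(g\cdot x)$; I must show $f(g\cdot X_y)$ is a single point. To see this, consider the morphism $G\times X_y\to Y$, $(g,x)\mapsto f(g\cdot x)$. Its restriction to $\{e\}\times X_y$ is the constant map with value $y$. Now $X_y$ is a fiber of the proper morphism $f$, hence proper, while $G$ is connected and $Y$ is separated, so the classical rigidity lemma (Mumford) forces $\{g\}\times X_y$ to be contracted to a point for every $g\in G$. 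This is exactly the step that breaks down for disconnected $G$ — for instance a permutation of the factors of $(\mathbb{P}^1)^n$ does not preserve the fibers of a projection — which is precisely why the discrete part of the automorphism group has to be treated by separate methods.

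Finally, the factorization lemma yields a \emph{unique} morphism $b\colon G\times Y\to Y$ with $\Phi=b\circ p$, which is the desired action; uniqueness of $b$ comes from the uniqueness clause, or equivalently from $p$ being an epimorphism (it is surjective and satisfies $p_*\mathcal{O}_{G\times X}\cong\mathcal{O}_{G\times Y}$). The group-action axioms for $b$ are inherited from those for $a$: applying $f$-equivariance twice gives $b(g_1g_2,f(x))=f(a(g_1g_2,x))=f(a(g_1,a(g_2,x)))=b(g_1,b(g_2,f(x)))$ and $b(e,f(x))=f(a(e,x))=f(x)$ for all $x$, and since $\mathrm{id}\times f$ is an epimorphism these identities hold with $f(x)$ replaced by an arbitrary point of $Y$, i.e. $b$ is an action of $G$ on $Y$.
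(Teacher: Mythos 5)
The paper does not prove this statement at all: it is quoted verbatim from Brion \cite[Proposition 2.1]{Br} (the scheme-theoretic Blanchard lemma), so there is no internal proof to compare against. Your architecture --- identify equivariance with factoring $\Phi=f\circ a$ through $p=\mathrm{id}_G\times f$, obtain $p_*\mathcal{O}_{G\times X}\cong\mathcal{O}_{G\times Y}$ by flat base change along the flat projection $G\times Y\to Y$, apply a factorization lemma for proper morphisms with $p_*\mathcal{O}=\mathcal{O}$, and prove set-theoretic constancy on fibers by a rigidity argument propagated from $e\in G$ --- is in fact the same as Brion's own proof, and your factorization lemma, its local construction via $\Gamma(p^{-1}(W'),\mathcal{O})\cong\Gamma(W',\mathcal{O})$, the epimorphism argument for uniqueness, and the verification of the action axioms are all correct.

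The genuine gap is in the rigidity step. You invoke ``the classical rigidity lemma'' for $G\times X_y\to Y$, $(g,x)\mapsto f(g\cdot x)$, on the strength of three hypotheses: $X_y$ proper, $G$ connected, $Y$ separated. These do not suffice, because rigidity fails for disconnected $X_y$. Concretely, let $f:\mathbb{P}^1\to\mathbb{P}^1$, $z\mapsto z^2$, and let $G=\mathbb{G}_a$ act by translations $z\mapsto z+t$ (fixing $\infty$): the restriction of $(t,x)\mapsto f(t\cdot x)$ to $\{0\}\times f^{-1}(y)$ is constant with value $y$, yet for $y\neq 0$ and general $t$ the image $f(t\cdot f^{-1}(y))=\{(\sqrt{y}+t)^2,(-\sqrt{y}+t)^2\}$ consists of two points --- and here $X_y$ is proper, $G$ is connected and $Y$ is separated. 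What excludes this in your situation is connectedness of the fibers $X_y$, which is precisely a second use of the hypothesis $f_*\mathcal{O}_X\cong\mathcal{O}_Y$: for a proper morphism of (locally noetherian) schemes this forces every fiber to be nonempty and connected, by Zariski's connectedness theorem, equivalently by Stein factorization; alternatively, the fibers of $p$ are connected because you already established $p_*\mathcal{O}_{G\times X}\cong\mathcal{O}_{G\times Y}$. This sentence must be inserted, since the lemma you cite is false under the hypotheses you list. Two smaller points: Mumford's rigidity lemma is stated for varieties, while $X_y$ may be non-reduced and reducible, so you should pass to $(X_y)_{\mathrm{red}}$ (the constancy you need is only set-theoretic) and argue on each irreducible component, using that a connected group scheme of finite type over a field is irreducible; and separatedness of $Y$ is not among the hypotheses of the statement --- it is part of Brion's standing conventions --- so you are quietly strengthening the assumptions.
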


We recall the following well-known fact.

\begin{Remark}\label{flaut}
If $\phi:X\rightarrow X$ is an automorphism of any scheme then the fixed locus of $\phi$ forms a closed subscheme. So once the fixed locus includes a dense set $\mathcal{U}\subset X$, the fixed locus is the entire space, and $\phi$ is the identity. 
\end{Remark}

In order to use inductive arguments we will need the following result.

\begin{Lemma}\label{induction}
Let $X$ be a homogeneous variety. If $\Aut^{o}(\overline{M}_{0,n}(X,\beta))\cong\Aut^{o}(X)$ for an $n\geq 5$ then $\Aut^{o}(\overline{M}_{0,k}(X,\beta))\cong\Aut^{o}(X)$ for any $k\geq n$.
\end{Lemma}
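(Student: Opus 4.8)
The plan is to argue by induction on $k$, using the forgetful morphism that drops a single marked point together with Brion's descent result (Theorem \ref{brion}). The base case $k=n$ is the hypothesis. Assuming $\Aut^{o}(\overline{M}_{0,k}(X,\beta))\cong\Aut^{o}(X)$ for some $k\geq n$, I would prove the same for $k+1$.

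First I would consider the forgetful morphism $\pi:=\pi_{k+1}:\overline{M}_{0,k+1}(X,\beta)\rightarrow\overline{M}_{0,k}(X,\beta)$ of type (\ref{forg1}). Being the universal curve, it is proper and flat with geometrically connected reduced fibers, so $\pi_{*}\mathcal{O}\cong\mathcal{O}$, and Theorem \ref{brion} applies to the natural $\Aut^{o}(\overline{M}_{0,k+1}(X,\beta))$-action. It yields a homomorphism
$$\Phi:\Aut^{o}(\overline{M}_{0,k+1}(X,\beta))\rightarrow\Aut^{o}(\overline{M}_{0,k}(X,\beta))\cong\Aut^{o}(X)$$
for which $\pi$ is equivariant. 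The natural action (\ref{actcci}) of $\Aut^{o}(X)$ on $\overline{M}_{0,k+1}(X,\beta)$ commutes with $\pi$ and descends to the natural action on $\overline{M}_{0,k}(X,\beta)$; under the inductive isomorphism this descended action is the standard one, so $\Phi$ restricted to the subgroup $\Aut^{o}(X)\subseteq\Aut^{o}(\overline{M}_{0,k+1}(X,\beta))$ is an isomorphism onto its target. It therefore suffices to prove that $\Phi$ is injective, for then $\Aut^{o}(\overline{M}_{0,k+1}(X,\beta))$ coincides with the subgroup $\Aut^{o}(X)$.

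The heart of the argument is the analysis of $\ker(\Phi)$. An element $\phi\in\ker(\Phi)$ preserves every fiber of $\pi$. Over a general point $b=[C,(x_1,\dots,x_k),\alpha]$ the fiber is the irreducible domain $C\cong\mathbb{P}^1$, and for each $i=1,\dots,k$ the boundary divisor $D_{i,k+1}$ (stable maps where $x_i$ and $x_{k+1}$ collide) meets this fiber in exactly the point corresponding to $x_i$. Since $\phi\in\Aut^{o}$ lies in a connected group, it fixes each irreducible boundary divisor, hence preserves $D_{i,k+1}\cap\pi^{-1}(b)=\{x_i\}$, so $\phi$ fixes $x_1,\dots,x_k$ on the fiber. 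As $k\geq n\geq 5\geq 3$, an automorphism of $\mathbb{P}^1$ fixing at least three points is the identity; thus $\phi$ is the identity on every general fiber, hence on a dense open subset, and Remark \ref{flaut} forces $\phi=\mathrm{id}$. This proves $\ker(\Phi)=1$ and completes the induction.

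The main obstacle is precisely this kernel computation: one must verify that a fiberwise automorphism of the universal curve which is trivial on the base genuinely fixes the marked points, and the invariance of the boundary divisors $D_{i,k+1}$ under the connected group $\Aut^{o}$ is exactly what supplies this. The auxiliary facts—flatness of the forgetful map and connectedness of its fibers, giving $\pi_{*}\mathcal{O}\cong\mathcal{O}$—are standard, but they should be recorded explicitly since they are what licenses the application of Theorem \ref{brion}.
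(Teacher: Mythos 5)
Your reduction of the lemma to the injectivity of the descent homomorphism $\Phi$ is sound group theory, and the surjectivity half (via the natural action of $\Aut^{o}(X)$ commuting with $\pi_{k+1}$) is fine. The genuine gap is in the injectivity argument, at the assertion that, because $\phi$ lies in the connected group $\Aut^{o}(\overline{M}_{0,k+1}(X,\beta))$, it ``fixes each irreducible boundary divisor $D_{i,k+1}$''. Connectedness alone gives nothing of the sort: a connected group fixes the irreducible components of a closed subset only when that subset is already known to be invariant, and in general it preserves a divisor only up to algebraic equivalence ($PGL(2)$ moves points of $\mathbb{P}^1$; translations of an abelian variety move divisors). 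Neither of the two facts that would rescue the claim is available here: (i) invariance of the boundary $\bigcup_i D_{i,k+1}$ under automorphisms of the coarse space is not known --- an automorphism of $\overline{M}_{0,k+1}(X,\beta)$ has no a priori modular description, and showing that it respects the boundary is essentially the content of the lemma, so invoking it is circular; (ii) rigidity of each $D_{i,k+1}$ in its algebraic equivalence class (which, combined with the triviality of the $\Aut^{o}$-action on the N\'eron--Severi group, would force $\phi(D_{i,k+1})=D_{i,k+1}$) is nowhere established. Without this step, an element $\phi\in\ker(\Phi)$ is only known to preserve each fiber $\pi^{-1}(b)\cong\mathbb{P}^1$, on which it may a priori act by an arbitrary, fiber-dependent M\"obius transformation: you have no fixed points at all, let alone three.

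The paper closes exactly this hole by applying Theorem \ref{brion} not to one forgetful morphism but to all of $\pi_1,\dots,\pi_{k+1}$ simultaneously, obtaining descended automorphisms $\overline{\phi}_i$ which, by the induction hypothesis, are induced by elements $\mu_i\in\Aut^{o}(X)$; a comparison of the resulting isomorphisms of domain curves (using $k\geq 5$ and the homogeneity of $X$) shows that all the $\mu_i$ coincide, and this common $\mu$ is what defines the homomorphism to $\Aut^{o}(X)$. Consequently, in the paper's kernel analysis one knows $\pi_i\circ\phi=\pi_i$ for \emph{every} $i$, and since the maps $\pi_i$, $i=2,\dots,k+1$, separate the $k$ special points $\overline{x}_i$ on a fiber $F_1$ of $\pi_1$, this pins down at least three fixed points of $\phi_{|F_1}$, whence $\phi_{|F_1}=Id$ and Remark \ref{flaut} concludes. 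Your argument can be repaired along the same lines --- for $\phi\in\ker(\Phi)$, also descend $\phi$ along each $\pi_i$ with $i\leq k$ and apply the induction hypothesis there to recover the marked points --- but that repair is precisely the paper's proof; the single-morphism shortcut, as written, does not work.
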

\begin{proof}
Let $\phi\in \Aut^{o}(\overline{M}_{0,k+1}(X,\beta))$, with $k\geq n$, be an automorphism. We proceed by induction on $k$. By Theorem \ref{brion} with $f = \pi_i$ for $i = 1,...,k+1$, we get the following $k+1$ commutative diagrams
 \[
  \begin{tikzpicture}[xscale=3.0,yscale=-1.2]
    \node (A0_0) at (0, 0) {$\overline{M}_{0,k+1}(X,\beta)$};
    \node (A0_1) at (1, 0) {$\overline{M}_{0,k+1}(X,\beta)$};
    \node (A1_0) at (0, 1) {$\overline{M}_{0,k}(X,\beta)$};
    \node (A1_1) at (1, 1) {$\overline{M}_{0,k}(X,\beta)$};
    \path (A0_0) edge [->]node [auto] {$\scriptstyle{\phi}$} (A0_1);
    \path (A1_0) edge [->]node [auto] {$\scriptstyle{\overline{\phi}_{1}}$} (A1_1);
    \path (A0_1) edge [->]node [auto] {$\scriptstyle{\pi_{1}}$} (A1_1);
    \path (A0_0) edge [->]node [auto,swap] {$\scriptstyle{\pi_{1}}$} (A1_0);
  \end{tikzpicture}
\cdots  
    \begin{tikzpicture}[xscale=3.0,yscale=-1.2]
    \node (A0_0) at (0, 0) {$\overline{M}_{0,k+1}(X,\beta)$};
    \node (A0_1) at (1, 0) {$\overline{M}_{0,k+1}(X,\beta)$};
    \node (A1_0) at (0, 1) {$\overline{M}_{0,k}(X,\beta)$};
    \node (A1_1) at (1, 1) {$\overline{M}_{0,k}(X,\beta)$};
    \path (A0_0) edge [->]node [auto] {$\scriptstyle{\phi}$} (A0_1);
    \path (A1_0) edge [->]node [auto] {$\scriptstyle{\overline{\phi}_{k+1}}$} (A1_1);
    \path (A0_1) edge [->]node [auto] {$\scriptstyle{\pi_{k+1}}$} (A1_1);
    \path (A0_0) edge [->]node [auto,swap] {$\scriptstyle{\pi_{k+1}}$} (A1_0);
  \end{tikzpicture}
  \]
where $\overline{\phi}_i\in\Aut^{o}(\overline{M}_{0,k}(X,\beta))$ for any $i = 1,...,k+1$. Therefore, by induction hypothesis $\overline{\phi}_i$ is given by    
$$
\begin{array}{cccc}
\overline{\phi}_i: &\overline{M}_{0,k}(X,\beta)& \longrightarrow & \overline{M}_{0,k}(X,\beta)\\
      & \left[C,(x_1,...,\hat{x}_i,...,x_{k+1}),\alpha\right] & \longmapsto & \left[C,(x_1,...,\hat{x}_i,...,x_{k+1}),\mu_i\circ\alpha\right]
\end{array}
$$
with $\mu_i\in \Aut^{o}(X)$, for any $i = 1,...,k+1$. Now, let $[C,(x_1,...,x_{k+1}),\alpha]\in \overline{M}_{0,k+1}(X,\beta)$ be a general point, and let $[\Gamma,(y_1,...,y_{k+1}),\gamma] = \phi([C,(x_1,...,x_{k+1}),\alpha])$ be its image.\\
Since $\pi_i\circ \phi = \overline{\phi}_i\circ\pi_i$ we get $[C,(x_1,...,\hat{x}_i,...,x_{k+1}),\mu_i\circ\alpha] = [\Gamma,(y_1,...,\hat{y}_i,...,y_{k+1}),\gamma]$. Therefore, for any $i = 1,...,k+1$ we have an isomorphism $\tau_i:C\rightarrow \Gamma$ such that $\tau_i(x_j) = y_j$ for any $j\neq i$ and $\gamma\circ\tau_i = \mu_i\circ\alpha$.\\ 
Now, $C$ and $\Gamma$ are two smooth rational curves and since $k\geq n\geq 5$ the isomorphisms $\tau_i,\tau_j:C\rightarrow \Gamma$ coincide on at least three marked points $x_h$ with $h\neq i,j$. Therefore, $\tau_1 = \tau_2 = ... = \tau_{k+1}$ and $\mu_1\circ\alpha = \mu_2\circ\alpha = ... = \mu_{k+1}\circ\alpha$. Since $X$ is homogeneous this yields $\mu_1 = \mu_2 = ... = \mu_{k+1}$. Let us denote this automorphism of $X$ by $\mu$, and consider the morphism of groups:
$$
\begin{array}{cccc}
\chi: &\Aut^{o}(\overline{M}_{0,k+1}(X,\beta))& \longrightarrow & \Aut^{o}(X)\\
      & \phi & \longmapsto & \mu
\end{array}
$$
By Section \ref{actions} $\chi$ is surjective. Now, assume that $\mu = \chi(\phi) = Id_{X}$. Then $\overline{\phi}_i = Id_{\overline{M}_{0,k}(X,\beta)}$ for any $i = 1,...,k+1$. Since $\overline{\phi}_1 = Id_{\overline{M}_{0,k}(X,\beta)}$ the automorphism $\phi$ restricts to an automorphism of the fiber $F_1:=\pi_{1}^{-1}([C,(x_2,...,x_{k+1}),\alpha])\cong C$. Note that when $x_1$ collides with $x_2,...,x_{k+1}$ we get $k$ special points $\overline{x}_i\in F_1$ for $i = 2,...,{k+1}$, where $\overline{x}_i\in F_1$ corresponds to a stable map $[C\cup\mathbb{P}^1,(x_1,...,x_{k+1}),\alpha]$ with reducible domain, where $x_1,x_i\in\mathbb{P}^1$, and $\alpha$ contracts $\mathbb{P}^1$.\\ 
Now, since $\pi_i\circ\phi = \pi_i$ for $i = 2,...,k+1$ the automorphism $\pi_{|F_1}:F_1\rightarrow F_1$ must fix $\overline{x}_i\in F_1$ for any $i = 2,...,k+1$. Since $k \geq 5$ this yields that $\phi_{|F_1} = Id_{F_1}$. Therefore, $\phi$ restricts to the identity on the general fiber of $\pi_1$. Finally, to conclude that $\phi = Id_{\overline{M}_{0,k+1}(X,\beta)}$ it is enough to recall Remark \ref{flaut}.    
\end{proof}

\subsubsection{Automorphisms of Cartesian products}
We will need the following simple results on the automorphisms of Cartesian products.

\begin{Lemma}\label{autprod}
Let $X_1,...,X_n$ be complete varieties. Then $\Aut^{o}(X_1\times ...\times X_n)\cong \Aut^{o}(X_1)\times ...\times\Aut^{o}(X_n)$.
\end{Lemma}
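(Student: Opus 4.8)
The plan is to reduce to the case of two factors by induction and then invoke Brion's theorem (Theorem \ref{brion}). The natural homomorphism
$$\iota:\Aut^{o}(X_1)\times\cdots\times\Aut^{o}(X_n)\longrightarrow\Aut^{o}(X_1\times\cdots\times X_n),$$
sending $(\mu_1,\dots,\mu_n)$ to the product automorphism $(x_1,\dots,x_n)\mapsto(\mu_1(x_1),\dots,\mu_n(x_n))$, is an injective morphism of algebraic groups, so the entire content is its surjectivity. By an evident induction it suffices to treat $n=2$, and I would write $X=X_1$, $Y=X_2$, and $W=X\times Y$.

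For surjectivity I would set $G=\Aut^{o}(W)$, which is a \emph{connected} group scheme, and consider the two projections $p_X:W\rightarrow X$ and $p_Y:W\rightarrow Y$. Since $X$ and $Y$ are complete varieties, each projection is proper; moreover $Y$ is integral and proper over the base field, so $H^{0}(Y,\mathcal{O}_Y)=k$, and by flat base change this yields $(p_X)_{*}\mathcal{O}_W\cong\mathcal{O}_X$, with the symmetric statement for $p_Y$. Thus Theorem \ref{brion} applies to each projection and produces unique $G$-actions on $X$ and on $Y$ making $p_X$ and $p_Y$ equivariant; because $G$ is connected, the resulting homomorphisms $G\rightarrow\Aut(X)$ and $G\rightarrow\Aut(Y)$ land in $\Aut^{o}(X)$ and $\Aut^{o}(Y)$ respectively.

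I would then unwind equivariance pointwise. Given $\phi\in G$, let $\phi_X\in\Aut^{o}(X)$ and $\phi_Y\in\Aut^{o}(Y)$ be the induced automorphisms, so that $p_X\circ\phi=\phi_X\circ p_X$ and $p_Y\circ\phi=\phi_Y\circ p_Y$. Evaluating at a point $(x,y)\in W$ gives $p_X(\phi(x,y))=\phi_X(x)$ and $p_Y(\phi(x,y))=\phi_Y(y)$, and hence $\phi(x,y)=(\phi_X(x),\phi_Y(y))$. Therefore $\phi=\phi_X\times\phi_Y$ lies in the image of $\iota$, proving surjectivity. Since $\iota$ is a bijective homomorphism of algebraic groups in characteristic zero, it is an isomorphism, which settles $n=2$ and closes the induction.

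The step I expect to be the only real point requiring care is the verification of the hypothesis $(p_X)_{*}\mathcal{O}_W\cong\mathcal{O}_X$ of Brion's theorem, which is exactly where completeness and (geometric) connectedness and reducedness of the factors enter; everything after that is essentially formal. I would also take care to record that the induced assignment $\phi\mapsto(\phi_X,\phi_Y)$ is a morphism of group schemes inverse to $\iota$, so that the bijection on points genuinely upgrades to an isomorphism of algebraic groups rather than merely an abstract group isomorphism.
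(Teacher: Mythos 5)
Your proof is correct. The paper's own proof has the same inductive skeleton (set $Y = X_2\times\cdots\times X_n$ and reduce to two factors), but it disposes of the two-factor case with a single citation to \cite[Corollary 2.3]{Br}, whereas you re-derive that corollary from Blanchard's lemma in the form of Theorem \ref{brion}: you apply it to both projections $p_X$, $p_Y$ to get homomorphisms $\Aut^{o}(X\times Y)\rightarrow \Aut^{o}(X)$ and $\Aut^{o}(X\times Y)\rightarrow \Aut^{o}(Y)$, and then unwind equivariance pointwise to conclude that every $\phi\in\Aut^{o}(X\times Y)$ splits as $\phi_X\times\phi_Y$. This is in fact how Brion proves his Corollary 2.3, so your argument is a faithful reconstruction of the black box the paper cites; what it buys is self-containedness relative to Theorem \ref{brion}, which the paper already states, and it correctly isolates the one non-formal ingredient, namely $(p_X)_{*}\mathcal{O}_{X\times Y}\cong\mathcal{O}_X$, which you justify by completeness and integrality of the factors over an algebraically closed field (so $H^{0}(Y,\mathcal{O}_Y)=k$) together with flat base change. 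Your closing remark that the pointwise inverse $\phi\mapsto(\phi_X,\phi_Y)$ is itself a homomorphism of group schemes, so that the bijection upgrades to an isomorphism of algebraic groups, is also the right thing to check, and is harmless here since in characteristic zero all the group schemes involved are reduced.
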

\begin{proof}
The statement is trivial for $n=1$. Let $Y = X_{2}\times ...\times X_n$, then by \cite[Corollary 2.3]{Br} we have
$$\Aut^{o}(X_1\times ...\times X_n) =\Aut^{o}(X_1\times Y)\cong \Aut^{o}(X_1)\times \Aut^{o}(Y)$$
To conclude it is enough to argue by induction on $n$. 
\end{proof}

Furthermore, by Lemma \ref{fibg2} it is straightforward to compute the automorphism group of a Cartesian product of curves of genus different from one.

\begin{Lemma}\label{autcp}
Let $C_1,...,C_r$ be smooth projective curves of genus $g(C_i)\neq 1$, and let us denote by $[C_{r_1}],...,[C_{r_k}]$ the isomorphism classes of curves in $\{C_1,...,C_r\}$, where $r_i$ is the number of curves of class $[C_{r_i}]$. Then
$$\Aut(C_1\times ...\times C_r) \cong (S_{r_1}\ltimes\Aut(C_{r_1})^{r_1})\times ...\times (S_{r_k}\ltimes\Aut(C_{r_k})^{r_k}).$$
In particular, $\Aut(C^r) \cong S_{r}\ltimes \Aut(C)^r$.
\end{Lemma}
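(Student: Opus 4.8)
The plan is to apply Lemma \ref{fibg2} coordinatewise to an arbitrary $\phi\in\Aut(X)$, where $X=C_1\times\cdots\times C_r$, and read off the group structure. Write $\pi_i\colon X\to C_i$ for the $i$-th projection. For each index $i$ the composite $\pi_i\circ\phi\colon X\to C_i$ is a dominant morphism onto a curve of genus $\neq 1$. When $g(C_i)\geq 2$, Lemma \ref{fibg2} (applied with the single target curve $B_1=C_i$, using the $g(B_i)\geq 2$ branch) shows that $\pi_i\circ\phi$ factors as $f_i\circ\pi_{\tau(i)}$ for some index $\tau(i)$ and a nonconstant morphism $f_i\colon C_{\tau(i)}\to C_i$; since $C_i$ has genus $\geq 2$ and admits a nonconstant map from $C_{\tau(i)}$, the factor $C_{\tau(i)}$ also has genus $\geq 2$. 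In other words, the higher-genus output coordinates of $\phi$ depend only on the higher-genus input coordinates.

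The one genuinely delicate point, and what I expect to be the main obstacle, is to rule out mixing between the genus-zero and the higher-genus factors, since Lemma \ref{fibg2} as stated does not directly treat a map to $\P^1$ out of a product containing higher-genus curves. To handle this I would group $X=P\times Q$, where $P\cong(\P^1)^a$ collects the rational factors and $Q$ is the product of the factors of genus $\geq 2$. The previous paragraph shows that $\phi$ carries the $Q$-coordinates into $Q$, so $\phi(p,q)=(\alpha(p,q),\beta(q))$ with $\beta\in\Aut(Q)$. For each fixed $q\in Q$ the restriction of a rational output coordinate gives a dominant morphism $(\P^1)^a\to\P^1$, which by the $g(C_i)=0$ branch of Lemma \ref{fibg2} factors through a single projection via some $f^q\in PGL(2)$; the relevant index is constant by connectedness of $Q$, and the resulting morphism $Q\to PGL(2)$ is constant because $Q$ is projective and $PGL(2)$ is affine, so $\Hom(Q,PGL(2))$ is trivial. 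Hence the rational outputs are independent of $q$, giving $\phi=\alpha\times\beta$ with $\alpha\in\Aut(P)$, and therefore $\Aut(X)\cong\Aut(P)\times\Aut(Q)$.

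It then remains to assemble the pure pieces. Running the coordinatewise factoring on $P$ and on $Q$ separately, each $\pi_i\circ\phi$ equals $f_i\circ\pi_{\tau(i)}$ with $f_i\colon C_{\tau(i)}\to C_i$ an isomorphism (invertibility of $\phi$ forces $\tau$ to be a bijection and each $f_i$ an isomorphism, exactly as in the clean case: if $\tau$ missed an index $j$ then $\phi$ would be independent of $x_j$ and fail to be dominant). Since $f_i$ is an isomorphism, $C_{\tau(i)}\cong C_i$, so $\tau$ preserves the partition of $\{1,\dots,r\}$ into isomorphism classes and distinct classes never interact. The assignment $\phi\mapsto(\tau,(f_i)_i)$ is then a bijection onto class-preserving permutations together with tuples of isomorphisms; composing two automorphisms multiplies the permutation parts and conjugates the isomorphism parts by the permutation, which is precisely the wreath-product multiplication. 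This yields $\Aut(X)\cong\prod_{j=1}^{k}\bigl(S_{r_j}\ltimes\Aut(C_{r_j})^{r_j}\bigr)$, and specializing $C_1=\cdots=C_r=C$ gives $\Aut(C^r)\cong S_r\ltimes\Aut(C)^r$.
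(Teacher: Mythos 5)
Your proof is correct, and it takes a genuinely different route from the paper's. The paper composes $\phi^{-1}$ with the projections $\pi_i\colon C_1\times\cdots\times C_r\to C_1\times\cdots\widehat{C}_i\cdots\times C_r$ forgetting one factor, cites Lemma \ref{fibg2} to factor each $\pi_i\circ\phi^{-1}$ through some $\pi_{j_i}$, deduces $C_i\cong C_{j_i}$ because $\phi^{-1}$ carries fibers of $\pi_{j_i}$ (copies of $C_{j_i}$) isomorphically onto fibers of $\pi_i$ (copies of $C_i$), and from this builds a surjection onto $S_{r_1}$ whose kernel is computed by induction on $r$; the direct/semidirect structure is then read off from which actions commute. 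You instead project onto single factors and assemble the wreath product in one step from the bijection $\phi\mapsto(\tau,(f_i)_i)$. The most significant difference is your treatment of the mixed case, and here your extra work is not redundant: when $\{C_1,\dots,C_r\}$ contains both rational and higher-genus curves, the morphism $\pi_i\circ\phi^{-1}$ used in the paper has mixed source \emph{and} mixed target, a configuration not covered by the hypotheses of Lemma \ref{fibg2} (which require either all target factors of genus at least $2$, or all source factors rational), so the paper's citation is strictly speaking out of scope there. Your splitting $X=P\times Q$ --- higher-genus outputs only see higher-genus inputs by the genus-$\geq 2$ branch of Lemma \ref{fibg2} (that is, \cite[Lemma 3.8]{Ca00}) together with Riemann--Hurwitz, and the rational outputs are independent of $Q$ because a $Q$-parametrized family of elements of $PGL(2)$ is constant ($Q$ complete and connected, $PGL(2)$ affine) --- supplies exactly the missing argument, so your proof establishes the statement in the generality in which it is asserted, whereas the paper's argument is complete only in the unmixed cases (which are, in fact, the only ones used elsewhere in the paper). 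One small point of bookkeeping, which the paper itself flags in the Remark following the lemma: with your convention $\pi_i\circ\phi=f_i\circ\pi_{\tau(i)}$, composition gives the permutation $\tau'\circ\tau$ for $\phi\circ\phi'$, so $\phi\mapsto(\tau,(f_i)_i)$ is an anti-homomorphism; send $\phi$ to $(\tau^{-1},(f_i)_i)$, or work with $\phi^{-1}$ throughout as the paper does, to obtain an honest isomorphism of groups.
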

\begin{proof}
Let us fix an automorphism $\phi\in\Aut(C_1\times ...\times C_r)$. We may assume that the curves of class $[C_{r_1}]$ are $C_1,...,C_{r_1}$. Let $\pi_i:C_1\times ...\times C_r\rightarrow C_1\times ....\widehat{C}_i\times ...\times C_r$ be the projection forgetting the point on $C_i$. \\  
By Lemma \ref{fibg2} for any $i\in\{1,...,r\}$ the morphism $\pi_i\circ \phi^{-1}:C_1\times ...\times C_r\rightarrow C_1\times ....\widehat{C}_i\times ...\times C_r$ factors through a projection $\pi_{j_i}$, and hence we have the commutative diagram 
  \[
  \begin{tikzpicture}[xscale=4.5,yscale=-1.2]
    \node (A0_0) at (0, 0) {$C_1\times ....\times C_r$};
    \node (A0_1) at (1, 0) {$C_1\times ...\times C_r$};
    \node (A1_0) at (0, 1) {$C_1\times ....\widehat{C}_{j_i}\times ...\times C_r$};
    \node (A1_1) at (1, 1) {$C_1\times ....\widehat{C}_{i}\times ...\times C_r$};
    \path (A0_0) edge [->]node [auto] {$\scriptstyle{\phi^{-1}}$} (A0_1);
    \path (A1_0) edge [->]node [auto] {$\scriptstyle{\overline{\phi}}$} (A1_1);
    \path (A0_1) edge [->]node [auto] {$\scriptstyle{\pi_{i}}$} (A1_1);
    \path (A0_0) edge [->,swap]node [auto] {$\scriptstyle{\pi_{j_i}}$} (A1_0);
  \end{tikzpicture}
  \]
Note that $\phi^{-1}$ induces an isomorphism between the fiber of $\pi_i$ which is $C_i$, and the fiber of $\pi_{j_i}$ which is $C_{j_i}$. Therefore, $i\in\{1,...,r_1\}$ forces $j_i\in\{1,...,r_1\}$ as well, and we get a surjective morphism of groups
$$
\begin{array}{cccc}
\chi: &\Aut(C_1\times ...\times C_r)& \longrightarrow & S_{r_1}\\
      & \phi & \longmapsto & \sigma_{\phi}
\end{array}
$$
where $\sigma_{\phi}(i) = j_i$. Now, if $\phi$ induces the trivial permutation via $\chi$ then $\phi\in \Aut(C_{r_1})^{r_1}\times \Aut(C_{r_1+1}\times ...\times C_r)$, where the product is direct since the actions of the two groups commute. Proceeding by induction on $r$ we have $\Aut(C_{r_{1}+1}\times ...\times C_r)\cong (S_{r_2}\ltimes\Aut(C_{r_2})^{r_2})\times ...\times (S_{r_k}\ltimes\Aut(C_{r_k})^{r_k})$, and hence 
$$\Aut(C_1\times ...\times C_r)\cong S_{r_1}\ltimes (\Aut(C_{r_1})^{r_1}\times (S_{r_2}\ltimes\Aut(C_{r_2})^{r_2})\times ...\times (S_{r_k}\ltimes\Aut(C_{r_k})^{r_k}))$$
To conclude it is enough to observe that the action of $S_{r_1}$ commutes with the action of $S_{r_i}\ltimes\Aut(C_{r_i})^{r_i}$ for any $i=2,...,k$, but does not commute with the action of $\Aut(C_{r_1})^{r_1}$.
\end{proof}

\begin{Remark}
In the proof of Lemma \ref{autcp} we considered $\pi_i\circ \phi^{-1}$ instead of $\pi_i\circ \phi$ in order to make the map $\chi$ a morphism of groups. For the same reason, in similar settings, we will consider $\phi^{-1}$ instead of $\phi$ several times in the rest of the paper.
\end{Remark}

As an application of Theorem \ref{brion} we get the following result. 

\begin{Proposition}\label{connFM}
If either $n\neq 2$ or $\dim(X)\geq 2$, then the connected component of the identity of $\Aut(X[n])$ is isomorphic to the connected component of the identity of $\Aut(X)$, that is
$$\Aut^{o}(X[n])\cong \Aut^{o}(X)$$
for any $n$.
\end{Proposition}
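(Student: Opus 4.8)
The plan is to exploit the blow-down $g_n\colon X[n]\to X^n$ of Proposition \ref{sym} together with Brion's descent theorem (Theorem \ref{brion}), and to identify $\Aut^o(X[n])$ with the diagonal copy of $\Aut^o(X)$ sitting inside $\Aut^o(X^n)\cong\Aut^o(X)^n$, the latter isomorphism being Lemma \ref{autprod}. The whole content of the hypothesis ``$n\neq 2$ or $\dim(X)\geq 2$'' will be squeezed into the single step where I force all the factors of an element of $\Aut^o(X)^n$ to coincide.

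First I would record two homomorphisms. On one side, the diagonal action $\mu\mapsto \mu\times\cdots\times\mu$ of $\Aut^o(X)$ on $X^n$ preserves every diagonal $\Delta_S$, so by functoriality of blow-up it lifts through the sequence of blow-ups defining $g_n$ to an injective homomorphism $\iota\colon \Aut^o(X)\hookrightarrow\Aut^o(X[n])$ (injectivity is clear, since the diagonal action on $X^n$ is faithful and $g_n$ is birational). On the other side, $g_n$ is a proper birational morphism onto the smooth, hence normal, variety $X^n$, so $(g_n)_*\O_{X[n]}\cong\O_{X^n}$; applying Theorem \ref{brion} with $G=\Aut^o(X[n])$ and $f=g_n$ yields a homomorphism $\Phi\colon\Aut^o(X[n])\to\Aut^o(X^n)\cong\Aut^o(X)^n$ for which $g_n$ is equivariant.

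Next I would check that $\Phi$ is injective and that its image lands in the diagonal. For injectivity, if $\Phi(\phi)=\operatorname{id}$ then $g_n\circ\phi=g_n$, so $\phi$ is a relative automorphism of $g_n$ and restricts to the identity on the dense open locus where $g_n$ is an isomorphism; Remark \ref{flaut} then gives $\phi=\operatorname{id}$. For the diagonal claim, $\phi$ preserves the exceptional locus of $g_n$, so $\Phi(\phi)$ preserves its image $\bigcup_S\Delta_S$, the union running over the blown-up centers. Now an element $(\mu_1,\dots,\mu_n)\in\Aut^o(X)^n$ carries $\Delta_S$ to the twisted diagonal $\{\,\mu_i^{-1}x_i=\mu_j^{-1}x_j\ \forall\, i,j\in S\,\}$, which constrains exactly the coordinates indexed by $S$; since the image must again be one of the components of the preserved union and that is the only such component constraining the coordinates in $S$, it equals $\Delta_S$, and this equality forces $\mu_i=\mu_j$ for all $i,j\in S$.

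The hard part, and the only place the hypothesis enters, is guaranteeing that the blown-up centers $\Delta_S$ are numerous enough to force all the $\mu_i$ equal. When $\dim(X)\geq 2$ every pairwise diagonal $\Delta_{\{i,j\}}$ has codimension $\geq 2$ and is among the centers, so each pair already gives $\mu_i=\mu_j$. When $\dim(X)=1$ the $\Delta_{\{i,j\}}$ are divisors and are not genuinely blown up, but as soon as $n\geq 3$ the triple diagonals $\Delta_{\{i,j,k\}}$ have codimension $2$ and do appear, and preserving $\Delta_{\{i,j,k\}}$ yields $\mu_i=\mu_j=\mu_k$; letting the triple range over all indices again forces all the $\mu_i$ to agree. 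The excluded case $n=2,\dim(X)=1$ is precisely the one in which $g_2$ is the blow-up of a divisor, hence an isomorphism, so no such constraint exists and indeed $\Aut^o(C[2])\cong\Aut^o(C)^2$. In every admissible case $\operatorname{im}\Phi$ is the diagonal $\Aut^o(X)$, and since $\Phi\circ\iota$ is the diagonal embedding (by the uniqueness clause in Theorem \ref{brion}), the injective $\Phi$ and the map $\iota$ are mutually inverse under this identification, giving $\Aut^o(X[n])\cong\Aut^o(X)$.
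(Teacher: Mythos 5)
Your proposal is correct, and its scaffolding is the same as the paper's: Brion's theorem (Theorem \ref{brion}) applied to $g_n$, the decomposition $\Aut^{o}(X^n)\cong\Aut^{o}(X)^n$ of Lemma \ref{autprod}, the lift of the diagonal action through the blow-up tower for surjectivity, and Remark \ref{flaut} to finish. Where you genuinely diverge is in the step that forces the descended automorphism $(\mu_1,\dots,\mu_n)$ to be diagonal. The paper compares fiber dimensions of $g_n$: a point of the small diagonal $\Delta_{1,\dots,n}$ has fiber of dimension $(n-1)\dim(X)-1$, strictly larger than the dimension of any fiber over a point off the small diagonal, so the descended automorphism preserves $\Delta_{1,\dots,n}$; the homomorphism $\chi$ to $\Aut^{o}(X)$ is then restriction to the small diagonal, and its injectivity comes from the observation that an element of $\Aut^{o}(X)^n$ fixing the small diagonal pointwise must be the identity. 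You instead use that $\phi$ preserves the exceptional locus of $g_n$, hence that $(\mu_1,\dots,\mu_n)$ permutes the irreducible components of the union of blown-up diagonals, and your twisted-diagonal computation forces $\mu_i=\mu_j$ whenever $\{i,j\}$ is contained in a blown-up center. Consequently the hypothesis enters through different mechanisms: in the paper via strictness of the fiber-dimension inequality (which fails exactly when $n=2$ and $\dim(X)=1$, where all fibers are points), in your argument via the existence of suitable centers (pairwise diagonals when $\dim(X)\geq 2$, triple diagonals when $\dim(X)=1$ and $n\geq 3$). Your route makes the role of the hypothesis, and the failure in the excluded case $C[2]\cong C\times C$, especially transparent; the paper's fiber-dimension argument has the advantage that it is reused almost verbatim later, in the proof of Theorem \ref{autFMP1}, to show that the induced automorphism of $(\mathbb{P}^1)^n$ preserves the small diagonal.
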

\begin{proof}
Let $g_n:X[n]\rightarrow X^n$ be the blow-up morphism in Proposition \ref{sym}. Since $g_n$ is birational we have $g_{n*}\mathcal{O}_{X[n]} \cong\mathcal{O}_{X^n}$, and we may apply Theorem \ref{brion} with $f = g_n$ and $G = \Aut^{o}(X[n])$. Indeed, by Theorem \ref{brion} for any automorphism $\phi\in \Aut^{o}(X[n])$ there exists an automorphism $\overline{\phi}\in \Aut^{o}(X^n)$ such that the following diagram
 \[
  \begin{tikzpicture}[xscale=2.9,yscale=-1.2]
    \node (A0_0) at (0, 0) {$X[n]$};
    \node (A0_1) at (1, 0) {$X[n]$};
    \node (A1_0) at (0, 1) {$X^n$};
    \node (A1_1) at (1, 1) {$X^n$};
    \path (A0_0) edge [->]node [auto] {$\scriptstyle{\phi}$} (A0_1);
    \path (A1_0) edge [->]node [auto] {$\scriptstyle{\overline{\phi}}$} (A1_1);
    \path (A0_1) edge [->]node [auto] {$\scriptstyle{g_{n}}$} (A1_1);
    \path (A0_0) edge [->]node [auto,swap] {$\scriptstyle{g_{n}}$} (A1_0);
  \end{tikzpicture}
  \]
is commutative. Now, let $x = (x,...,x)\in X^n$ be a point in the small diagonal $\Delta_{1,...,n}$, and assume that $\overline{\phi}(x) = y\notin\Delta_{1,...,n}$. Let $F_x$ and $F_y$ be the fibers of $g_{n}$ over $x$ and $y$ respectively. Then $\phi$ restricts to an isomorphism $\phi_{|F_x}:F_x\rightarrow F_y$. On the other hand by Proposition \ref{sym} we know that $\dim(F_x) = (n-1)\dim(X)-1$, while $y\notin\Delta_{1,...,n}$ yields $\dim(F_y)< (n-1)\dim(X)-1$. A contradiction. Therefore, $\overline{\phi}$ restricts to an automorphism of $\Delta_{1,...,n}\cong X$, and we get a morphism of groups:
$$
\begin{array}{cccc}
\chi: &\Aut^{o}(X[n]) & \longrightarrow & \Aut^o(X)\\
      & \phi & \longmapsto & \overline{\phi}_{|\Delta_{1,...,n}}
\end{array}
$$
Furthermore, by Lemma \ref{autprod} $\overline{\phi}$ comes from the diagonal action of $\Aut^{o}(X)$. Now, all the subvarieties of $X^n$ blown-up in the construction of Proposition \ref{sym} are stabilized by the diagonal action of $\Aut^{o}(X)$ on $X^n$. Therefore, by \cite[Corollary 7.15]{Har} this action lifts to an action of $\Aut^{o}(X)$ on $X[n]$, and the morphism $\chi$ is surjective.\\ 
Finally, let $\phi\in\Aut^{o}(X[n])$ such that $\chi(\phi) = \overline{\phi}_{|\Delta_{1,...,n}} = Id_{\Delta_{1,...,n}}$. Now $\overline{\phi}\in \Aut^{o}(X^n)$ and by Lemma \ref{autprod} we have $\Aut^{o}(X^n)\cong \Aut^{o}(X)^n$. Therefore, we may write $\overline{\phi} = (\overline{\phi}_1,...,\overline{\phi}_n)$ where $\overline{\phi}_i:X\rightarrow X$ is an automorphism of $X$ for any $i = 1,...,n$. Furthermore for any $x\in X$ we have $\overline{\phi}(x,...,x) = (\overline{\phi}_1(x),...,\overline{\phi}_n(x)) =  Id_{\Delta_{1,...,n}}(x,...,x) = (x,...,x)$ that is $\overline{\phi}_i(x) = x$. Then $\overline{\phi}_i = Id_{X}$ for any $i = 1,...,n$ and $\overline{\phi} = Id_{X^n}$.

Therefore, the automorphism $\phi$ restricts to an automorphism of a general fiber of $g_n$. On the other hand, since $g_n$ is birational such general fiber is a point. That is $\phi$ restricts to the identity on $X[n]\setminus\bigcup_{2\leq |S|\leq n}g_{n}^{-1}(\Delta_S)$. By Remark \ref{flaut} we conclude that $\phi$ is the identity, and $\chi$ is injective.
\end{proof}

By Proposition \ref{connFM} we have that if $n\neq 2$ then $\Aut^{o}(\mathbb{P}^1[n])\cong PGL(2)$. Thanks to the main result on dominant morphisms from $\mathbb{P}^1[n]$ to $\mathbb{P}^1$ in Section \ref{LinPenc} we have the following stronger result.

\begin{Theorem}\label{autFMP1}
The automorphism group of the Fulton-MacPherson compactification $\mathbb{P}^1[n]$ is given by 
$$\Aut(\mathbb{P}^1[n])\cong S_n\times PGL(2)$$
if $n\neq 2$. Furthermore, $\Aut(\mathbb{P}^1[2])\cong S_2\ltimes (PGL(2)\times PGL(2))$.
\end{Theorem}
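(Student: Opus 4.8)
The plan is to leverage Proposition~\ref{connFM}, which already identifies the continuous part $\Aut^o(\mathbb{P}^1[n])\cong PGL(2)$ for $n\neq 2$, and to pin down the discrete quotient $\Aut(\mathbb{P}^1[n])/\Aut^o(\mathbb{P}^1[n])$ using the classification of pencils in Proposition~\ref{propfact}. By Section~\ref{actions} we already have a commuting pair of subgroups $S_n$ and $PGL(2)=\Aut^o(\mathbb{P}^1[n])$ inside $\Aut(\mathbb{P}^1[n])$, and since $\Aut^o$ is normal, it suffices to produce a surjective homomorphism $\Aut(\mathbb{P}^1[n])\to S_n$ whose kernel is exactly $\Aut^o(\mathbb{P}^1[n])$; the resulting extension then splits and is a direct product precisely because the two actions commute. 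The case $n=2$ is separate: there $\mathbb{P}^1[2]\cong\mathbb{P}^1\times\mathbb{P}^1$, so Lemma~\ref{autcp} applied to $C=\mathbb{P}^1$ gives $\Aut(\mathbb{P}^1[2])\cong S_2\ltimes(PGL(2)\times PGL(2))$ immediately.

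To build the homomorphism, fix $\phi\in\Aut(\mathbb{P}^1[n])$ and for each $i$ consider the dominant morphism $ev_i\circ\phi^{-1}:\mathbb{P}^1[n]\to\mathbb{P}^1$; I work with $\phi^{-1}$ rather than $\phi$, exactly as in the Remark following Lemma~\ref{autcp}, so that the assignment becomes a genuine homomorphism. Since $\phi^{-1}$ is an isomorphism and the evaluation maps have connected fibers, $ev_i\circ\phi^{-1}$ has connected fibers, and Proposition~\ref{propfact} forces it to factor through a modular base point free pencil, i.e. through either an evaluation morphism $ev_{\tau(i)}$ or a forgetful morphism $\pi_J\circ\rho$ of type~(\ref{forg}), followed by an element of $PGL(2)$ on the target.

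The crux is to exclude the second alternative, and this is the step I expect to be the main obstacle, since it is precisely what distinguishes the two families of modular pencils. Here I would use $PGL(2)$-equivariance: the action~(\ref{actcci}) of $\mu\in\Aut^o(\mathbb{P}^1[n])$ satisfies $ev_i(\mu\cdot z)=\mu(ev_i(z))$, so $ev_i$ is equivariant for the standard nontrivial $PGL(2)$-action on the target, whereas $\rho$ forgets the map and is therefore $PGL(2)$-invariant, making any morphism of type $\pi_J\circ\rho$ invariant as well. Conjugation by $\phi^{-1}$ is an automorphism of $PGL(2)=\Aut^o$ by normality, hence nontrivial, and a short computation gives $(ev_i\circ\phi^{-1})(\mu\cdot z)=c_{\phi^{-1}}(\mu)\big((ev_i\circ\phi^{-1})(z)\big)$ with $c_{\phi^{-1}}(\mu)=\phi^{-1}\mu\phi$. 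Thus $ev_i\circ\phi^{-1}$ transforms nontrivially under $\Aut^o$ and cannot be $PGL(2)$-invariant; the $\pi_J\circ\rho$ alternative is ruled out, and $ev_i\circ\phi^{-1}=\theta_i\circ ev_{\tau(i)}$ with $\theta_i\in PGL(2)$. The index $\tau(i)$ is well defined and $\tau=\tau_\phi$ is a permutation because distinct evaluation maps define distinct fibrations; the relation $ev_i\circ(\phi\psi)^{-1}=\theta_i^{\psi}\circ\theta^{\phi}_{\tau_\psi(i)}\circ ev_{\tau_\phi(\tau_\psi(i))}$ shows $\phi\mapsto\tau_\phi$ is a homomorphism, and it is surjective because $S_n$ already acts by permuting the marked points.

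Finally I would compute the kernel. If $\tau_\phi=\mathrm{id}$ then $ev_i\circ\phi^{-1}=\theta_i\circ ev_i$ for all $i$, so, writing $ev=ev_1\times\cdots\times ev_n=g_n$ for the blow-up morphism of Remark~\ref{kfm}, we get $g_n\circ\phi^{-1}=(\theta_1\times\cdots\times\theta_n)\circ g_n$. Hence $\phi^{-1}$ descends to $\overline\phi=\theta_1\times\cdots\times\theta_n\in\Aut((\mathbb{P}^1)^n)$, and because $\phi^{-1}$ is an automorphism of $\mathbb{P}^1[n]$ the map $\overline\phi$ must preserve every diagonal $\Delta_{i,j}$ blown up in Proposition~\ref{sym}; preserving $\Delta_{i,j}$ forces $\theta_i=\theta_j$, so all the $\theta_i$ coincide and $\overline\phi$ is the diagonal action of a single $\theta\in PGL(2)$. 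Comparing $\phi^{-1}$ with the lift of this diagonal $\theta$, which lies in $\Aut^o(\mathbb{P}^1[n])\cong PGL(2)$, on the open locus where $g_n$ is an isomorphism, Remark~\ref{flaut} gives $\phi^{-1}\in\Aut^o(\mathbb{P}^1[n])$, so the kernel of $\phi\mapsto\tau_\phi$ is exactly $\Aut^o(\mathbb{P}^1[n])\cong PGL(2)$. This yields a split exact sequence $1\to PGL(2)\to\Aut(\mathbb{P}^1[n])\to S_n\to 1$ with commuting factors, whence $\Aut(\mathbb{P}^1[n])\cong S_n\times PGL(2)$.
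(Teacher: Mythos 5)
Your proof is correct, and its overall skeleton matches the paper's: both settle $n\leq 2$ via Lemma \ref{autcp}, both apply Proposition \ref{propfact} to $ev_i\circ\phi^{-1}$ (using the inverse so the assignment is a homomorphism), both extract a permutation and then pin down the kernel using the birational morphism $ev_1\times\cdots\times ev_n$ together with Remark \ref{flaut}. The genuine divergence is at the pivotal step: ruling out the alternative that $ev_i\circ\phi^{-1}$ factors through a morphism $\pi_J\circ\rho$ of type (\ref{forg}). The paper argues via fibers: by Lemma \ref{isofib} all fibers of $ev_i$, hence of $ev_i\circ\phi^{-1}$, are isomorphic, whereas $\pi_{i_1,\dots,i_{n-4}}\circ\rho$ has irreducible general fiber but, over the boundary point of $\overline{M}_{0,4}$, a fiber which is a union of $\sum_{k=0}^{n-4}\binom{n-4}{k}$ boundary divisors --- a contradiction. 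You instead use equivariance: $ev_i$ intertwines the $\Aut^{o}$-action (\ref{actcci}) with the standard $PGL(2)$-action on the target, while any morphism of type $\pi_J\circ\rho$ is invariant under (\ref{actcci}); since conjugation by $\phi$ is an automorphism of the normal subgroup $\Aut^{o}(\mathbb{P}^1[n])\cong PGL(2)$ (Proposition \ref{connFM}, which also guarantees that every element of $\Aut^{o}$ is of the form (\ref{actcci})), the composite $ev_i\circ\phi^{-1}$ transforms nontrivially under $\Aut^{o}$ and cannot be invariant. Both arguments are sound; yours avoids any analysis of special fibers and needs only that $\Aut^{o}$ is normal and nontrivial, though it leans more heavily on Proposition \ref{connFM}, which the paper's exclusion step does not use. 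Your endgame also differs in bookkeeping: you produce the split exact sequence $1\to PGL(2)\to\Aut(\mathbb{P}^1[n])\to S_n\to 1$ with commuting factors, identifying the kernel by showing $\theta_1\times\cdots\times\theta_n$ preserves each codimension-one diagonal $\Delta_{i,j}$ (hence $\theta_i=\theta_j$), whereas the paper maps directly and injectively onto $S_n\times PGL(2)$ using preservation of the small diagonal only; these are equivalent in substance. One point you should write out in the kernel step: the reason $\overline{\phi}$ preserves each $\Delta_{i,j}$ is that $\phi^{-1}$ preserves the exceptional locus of $g_n$ (from $g_n\circ\phi^{-1}=\overline{\phi}\circ g_n$ with $\overline{\phi}$ an isomorphism), so $\overline{\phi}$ permutes the irreducible components $\Delta_{i,j}$ of the union of the diagonals, and a product automorphism can only send $\Delta_{i,j}$ to itself.
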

\begin{proof}
Since $\mathbb{P}^1[1]\cong\mathbb{P}^1$ the statement is trivial for $n = 1$. If $n = 2$ it follows from Lemma \ref{autcp}. Now, let us consider the case $n\geq 3$. As usual we take an automorphism $\phi\in \Aut(\mathbb{P}^1[n])$, and for any $i\in\{1,...,n\}$ we consider the composition $ev_i\circ\phi^{-1}$. By Proposition \ref{propfact} the morphism $ev_i\circ\phi^{-1}$ factors either through an evaluation map $ev_{j_i}$ or through a forgetful morphism $\pi_{i_1,...,i_{n-4}}\circ \rho$ of type (\ref{forg}).\\
Let us assume that $ev_i\circ\phi^{-1}$ factors through $\pi_{i_1,...,i_{n-4}}\circ \rho$. Then we have the following commutative diagram
  \[
  \begin{tikzpicture}[xscale=3.5,yscale=-1.2]
    \node (A0_0) at (0, 0) {$\mathbb{P}^1[n]$};
    \node (A0_1) at (1, 0) {$\mathbb{P}^1[n]$};
    \node (A1_0) at (0, 1) {$\overline{M}_{0,4}\cong\mathbb{P}^1$};
    \node (A1_1) at (1, 1) {$\mathbb{P}^1$};
    \path (A0_0) edge [->]node [auto] {$\scriptstyle{\phi^{-1}}$} (A0_1);
    \path (A1_0) edge [->]node [auto] {$\scriptstyle{\overline{\phi}}$} (A1_1);
    \path (A0_1) edge [->]node [auto] {$\scriptstyle{ev_i}$} (A1_1);
    \path (A0_0) edge [->,swap]node [auto] {$\scriptstyle{\pi_{i_1,...,i_{n-4}}\circ \rho}$} (A1_0);
  \end{tikzpicture}
  \]
Now, let $p\in\overline{M}_{0,4}$ be the point corresponding to the isomorphism class of a curve $[C = C_1\cup C_2,(x_1,...,x_4)]$, where $C_1,C_2$ are smooth rational curves intersecting in one node, $x_1,x_2\in C_1$ and $x_3,x_4\in C_2$. Then $\pi_{i_1,...,i_{n-4}}^{-1}(p)$ is the union of $\sum_{k=0}^{n-4}\binom{n-4}{k}$ irreducible boundary divisors each one determined by a subset of $\{i_1,...,i_{n-4}\}$ labeling the marked points on $C_1$. On the other hand, the general fiber of $\pi_{i_1,...,i_{n-4}}\circ \rho$ is irreducible. Now, to get a contradiction it is enough to recall that by Lemma \ref{isofib} all the fibers of $ev_i$ are isomorphic.\\
Therefore $ev_i\circ\phi^{-1}$ must factor through another evaluation map $ev_{j_i}$, and this yields a surjective morphism of groups
$$
\begin{array}{cccc}
\chi_n: &\Aut(\mathbb{P}^1[n])& \longrightarrow & S_n\\
      & \phi & \longmapsto & \sigma_{\phi}
\end{array}
$$
where $\sigma_{\phi}(i) = j_i$. Since $ev_i\circ\phi^{-1} = \overline{\phi}_i\circ ev_{j_i}$ with $\overline{\phi}_i\in PGL(2)$, we have that 
$$(ev_1\times ...\times ev_n)\circ\phi^{-1} = (\overline{\phi}_1\times...\times \overline{\phi}_n)\circ (ev_{\sigma(1)}\times...\times ev_{\sigma(n)})$$
and the commutative diagram
  \[
  \begin{tikzpicture}[xscale=3.5,yscale=-1.2]
    \node (A0_0) at (0, 0) {$\mathbb{P}^1[n]$};
    \node (A0_1) at (1, 0) {$\mathbb{P}^1[n]$};
    \node (A1_0) at (0, 1) {$(\mathbb{P}^1)^n$};
    \node (A1_1) at (1, 1) {$(\mathbb{P}^1)^n$};
    \path (A0_0) edge [->]node [auto] {$\scriptstyle{\phi^{-1}}$} (A0_1);
    \path (A0_0) edge [->,swap]node [auto] {$\scriptstyle{ev_{\sigma(1)}\times...\times ev_{\sigma(n)}}$} (A1_0);
    \path (A0_1) edge [->]node [auto] {$\scriptstyle{ev_1\times ...\times ev_n}$} (A1_1);
    \path (A1_0) edge [->]node [auto] {$\scriptstyle{\overline{\phi}_1\times ...\times \overline{\phi}_n}$} (A1_1);
  \end{tikzpicture}
  \]
Note that by Proposition \ref{sym} both $ev_1\times ...\times ev_n$ and $ev_{\sigma(1)}\times ...\times ev_{\sigma(n)}$ are blow-ups of the diagonals on $(\mathbb{P}^1)^n$ in order of increasing dimension. Arguing exactly as in the proof of Proposition \ref{connFM} we see that $\overline{\phi}_1\times ...\times \overline{\phi}_n\in \Aut((\mathbb{P}^1)^n)$ must preserve the diagonal $\Delta_{1,...,n}\cong\mathbb{P}^1$, and therefore it yields an automorphism $\mu_{\phi}\in PGL(2)$. This induces a morphism of groups 
$$
\begin{array}{cccc}
\overline{\chi}_n: &\Aut(\mathbb{P}^1[n])& \longrightarrow & S_n\times PGL(2)\\
      & \phi & \longmapsto & (\sigma_{\phi},\mu_{\phi})
\end{array}
$$
which, by Section \ref{actions} is surjective. Now assume that $\overline{\chi}_n(\phi)$ is the identity. Then arguing as in the proof of Proposition \ref{connFM} we have that $\phi^{-1}$ stabilizes the general fiber of $ev_1\times...\times ev_n$. On the other hand  $ev_1\times...\times ev_n$ is birational and hence $\phi^{-1}$ restricts to the identity on an open subset of $\mathbb{P}^1[n]$. To conclude it is enough to observe that Remark \ref{flaut} forces $\phi^{-1} = \phi = Id_{\mathbb{P}^1[n]}$.
\end{proof}

With similar arguments we can attack the automorphism group of $\overline{M}_{0,n}(\mathbb{P}^N,1)$. Note that since the degree of the map is one there is a natural $PGL(2)$ action on $\overline{M}_{0,n}(\mathbb{P}^N,1)$. Let $[C,(x_1,...,x_n),\alpha]\in \overline{M}_{0,n}(\mathbb{P}^N,1)$ be a point, and let $\nu\in PGL(2)$.\\ 
There exists a unique component of $C$, say $C_1$, on which $\alpha$ has degree one. We consider the stable map $[\Gamma,(y_1,...,y_n),\alpha]\in \overline{M}_{0,n}(\mathbb{P}^N,1)$ obtained by acting with $\nu$ on $C_1$, and with the identity on the remaining components of $C$.\\
More precisely, let $p = C\cap \overline{C\setminus C_1}$, and let $x_{i_1},...,x_{i_k}$ be the marked points lying on $C_1$. We consider the pointed curve $(C_1,(\nu(x_{i_1}),...,\nu(x_{i_k}))$ and we attach to it a copy of $\overline{C\setminus C_1}$ at $\nu(p)$. Letting the map $\alpha$ unvaried this gives us the stable map $[\Gamma,(y_1,...,y_n),\alpha]$, and therefore an action
\stepcounter{Theorem}
\begin{equation}\label{actiondeg1}
\begin{array}{ccc}
PGL(2)\times\overline{M}_{0,n}(\mathbb{P}^N,1)& \longrightarrow & \overline{M}_{0,n}(\mathbb{P}^N,1)\\
\end{array}
\end{equation}
which is trivial when $n=0$, and coincides with (\ref{actcci}) when $N=1$. Indeed, when $n = 0$ we have $\overline{M}_{0,0}(\mathbb{P}^N,1)\cong \mathbb{G}(1,N)$, the Grassmannian of lines in $\mathbb{P}^N$, and \cite[Theorem 1.1]{Co} yields $\Aut^{o}(\mathbb{G}(1,N))\cong PGL(N+1)$.\\
Furthermore, if $n = 2$ we may define an action of $PGL(2)\times PGL(2)$ on $\overline{M}_{0,2}(\mathbb{P}^N,1)$. Indeed, given $(\nu_1,\nu_2)\in PGL(2)\times PGL(2)$ we can map a general point $[\mathbb{P}^1,(x_1,x_2), \alpha]\in \overline{M}_{0,2}(\mathbb{P}^N,1)$ to $[\mathbb{P}^1,(\nu_1(x_1),\nu_2(x_2)),\alpha]$. Note that a boundary point in $\overline{M}_{0,2}(\mathbb{P}^N,1)$ necessarily represents a stable map of the form $[C_1\cup C_2,(x_1,x_2),\alpha]$, where $x_1,x_2\in C_2$ and $\alpha$ has degree one on $C_1$. Now, we consider the curve $C_1$ with $x_1$ and $x_2$ collapsed in a point $x=x_1=x_2$. If $\nu_1(x)\neq \nu_2(x)$ then the image of $[C_1\cup C_2,(x_1,x_2),\alpha]$ will be $[C_1,(\nu_1(x),\nu_2(x)), \alpha]$. If $\nu_1(x)=\nu_2(x)$ then the image will be $[C_1\cup C_2,(y_1,y_2), \alpha]$ where $C_2$ is a smooth rational curve with two marked points attached to $C_1$ at the point $\nu_1(x)=\nu_2(x)$. Finally, if we have a stable map of the type $[\mathbb{P}^1,(x_1,x_2), \alpha]$, and $\nu_1(x_1) = \nu_2(x_2)$ then we map such a stable map to $[C_1\cup C_2,(y_1,y_2),\alpha]$, where $y_1,y_2\in C_2$ and $C_2$ is attached to $C_1$ at $\nu_1(x_1) = \nu_2(x_2)$. In this way we get a well-defined regular action
\stepcounter{Theorem}
\begin{equation}\label{actiondeg1n2}
\begin{array}{ccc}
(PGL(2)\times PGL(2))\times\overline{M}_{0,2}(\mathbb{P}^N,1)& \longrightarrow & \overline{M}_{0,2}(\mathbb{P}^N,1)\\
\end{array}
\end{equation}
which coincides with the action in Theorem \ref{autFMP1} when $N=1$.

\begin{Proposition}\label{propGrass}
If $N\geq 2$ and $n\geq 1$ then the connected component of the identity of the automorphism group of $\overline{M}_{0,n}(\mathbb{P}^N,1)$ is given by
$$\Aut^{o}(\overline{M}_{0,n}(\mathbb{P}^N,1))\cong PGL(2)\times PGL(N+1)$$
for any $n\neq 2$. Furthermore, $\Aut^{o}(\overline{M}_{0,2}(\mathbb{P}^N,1))\cong PGL(2)\times PGL(2)\times PGL(N+1)$.
\end{Proposition}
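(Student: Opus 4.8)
The plan is to prove both inclusions in the comparison
$$PGL(2)\times PGL(N+1)\ \subseteq\ \Aut^{o}(\overline{M}_{0,n}(\mathbb{P}^N,1)).$$
The forward inclusion is essentially given: the target action of $PGL(N+1)=\Aut(\mathbb{P}^N)$ by post-composition and the action $\eqref{actiondeg1}$ of $PGL(2)$ commute and are faithful, exactly as in Section \ref{actions}, so together they embed $PGL(2)\times PGL(N+1)$ into $\Aut^{o}$ (and $PGL(2)\times PGL(2)\times PGL(N+1)$ when $n=2$, via $\eqref{actiondeg1n2}$). The entire content is therefore the reverse inclusion: to show that every $\phi\in\Aut^{o}(\overline{M}_{0,n}(\mathbb{P}^N,1))$ is induced by such a pair of automorphisms.

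First I would exploit the forgetful morphism $\pi:\overline{M}_{0,n}(\mathbb{P}^N,1)\rightarrow\overline{M}_{0,0}(\mathbb{P}^N,1)\cong\mathbb{G}(1,N)$ obtained by forgetting all the marked points and stabilizing. Its fiber over a line $\ell$ is the moduli space $\overline{M}_{0,n}(\ell,1)\cong\mathbb{P}^1[n]$ of configurations on $\ell$, which is rational, so $\pi$ is proper with $\pi_{*}\mathcal{O}\cong\mathcal{O}_{\mathbb{G}(1,N)}$ and Theorem \ref{brion} applies with $f=\pi$ and $G=\Aut^{o}$. It produces, for each $\phi$, a descended automorphism $\overline{\phi}\in\Aut^{o}(\mathbb{G}(1,N))\cong PGL(N+1)$, the last identification being the Grassmannian computation $\cite[Theorem~1.1]{Co}$ quoted before the statement. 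Composing $\phi$ with the inverse of the $PGL(N+1)$-action lifting $\overline{\phi}$, I may assume $\overline{\phi}=\mathrm{id}_{\mathbb{G}(1,N)}$, and then $\pi\circ\phi=\pi$ forces $\phi$ to preserve every fiber $\pi^{-1}(\ell)\cong\mathbb{P}^1[n]$.

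The core step is the fiberwise analysis. Restricting $\phi$ to $\pi^{-1}(\ell)$ gives $\phi_{\ell}\in\Aut^{o}(\mathbb{P}^1[n])$, which by Proposition \ref{connFM} equals $PGL(2)$ when $n\neq 2$, and $PGL(2)\times PGL(2)$ when $n=2$ (since $\mathbb{P}^1[2]\cong\mathbb{P}^1\times\mathbb{P}^1$ by Theorem \ref{autFMP1} and Lemma \ref{autprod}). The assignment $\ell\mapsto\phi_{\ell}$ is a morphism from the proper variety $\mathbb{G}(1,N)$ to the affine group $PGL(2)$ (respectively $PGL(2)^2$), hence constant; so $\phi$ acts on all fibers by one and the same $\nu\in PGL(2)$, i.e.\ $\phi$ is the automorphism induced by $\nu$ through $\eqref{actiondeg1}$ (respectively by $(\nu_1,\nu_2)$ through $\eqref{actiondeg1n2}$). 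This gives the reverse inclusion and the surjectivity of the comparison homomorphism $\phi\mapsto(\nu,\overline{\phi})$ onto $PGL(2)\times PGL(N+1)$. Injectivity is then immediate from Remark \ref{flaut}: if $\overline{\phi}=\mathrm{id}$ and $\nu=\mathrm{id}$, then $\phi$ restricts to the identity on the dense open locus lying over the complement of the diagonals, hence $\phi=\mathrm{id}$ everywhere.

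The step I expect to be the real obstacle is the constancy of $\ell\mapsto\phi_{\ell}$, that is, upgrading the pointwise conclusion $\phi_{\ell}\in PGL(2)$ to a single global $\nu$. What has to be checked is that the relative automorphism group scheme of $\pi$ is the \emph{constant} group $PGL(2)$ coming from $\eqref{actiondeg1}$, and not a nontrivial inner twist by the tautological rank-two bundle on $\mathbb{G}(1,N)$; equivalently, that the section of this group scheme determined by $\phi$ lands in the constant subgroup, so that the ``morphism to an affine group is constant'' argument genuinely applies. This is precisely where the explicit description of the action $\eqref{actiondeg1}$ on the universal line is needed. For large $n$ the same conclusion can alternatively be propagated by an induction of the type carried out in Lemma \ref{induction}, using the one-point forgetful morphisms together with Theorem \ref{brion} and the rigidity of isomorphisms of $\mathbb{P}^1$ agreeing on three marked points; the $PGL(2)$-factor is then tracked alongside the $PGL(N+1)$-factor, while Lemma \ref{isofib} ensures the evaluation fibers behave uniformly, so that no forgetful morphism of type $\pi_{J}\circ\rho$ can interfere.
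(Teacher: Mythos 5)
You have reproduced, essentially verbatim, the paper's own strategy: Brion's Theorem \ref{brion} applied to the forgetful map $\pi:\overline{M}_{0,n}(\mathbb{P}^N,1)\rightarrow\mathbb{G}(1,N)$, the identification of the fibers with $\mathbb{P}^1[n]$ and of their connected automorphism groups via Proposition \ref{connFM} and Theorem \ref{autFMP1}, and the splitting coming from the actions (\ref{actcci}), (\ref{actiondeg1}), (\ref{actiondeg1n2}). Moreover, the step you single out as ``the real obstacle'' is exactly the step the paper passes over when it asserts the exact sequence $0\rightarrow\Aut^{o}(\mathbb{P}^1[n])\rightarrow\Aut^{o}(\overline{M}_{0,n}(\mathbb{P}^N,1))\rightarrow PGL(N+1)\rightarrow 0$. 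The difficulty is that this deferred check fails, and it fails definitively. The fiber of $\pi$ over a line $\ell$ is $\ell[n]$, and Proposition \ref{connFM} identifies $\Aut^{o}(\ell[n])$ with $\Aut(\ell)$, which is only non-canonically $PGL(2)$: the relative automorphism group scheme of $\pi$ is $PGL(S)$, where $S$ is the tautological rank-two subbundle on $\mathbb{G}(1,N)$, and this is a nontrivial inner twist whose only global section is the identity. Indeed, a section is an automorphism of the universal line $\mathbb{P}(S)$ over $\mathbb{G}(1,N)$, that is an isomorphism $S\rightarrow S\otimes L$ for some line bundle $L$; taking determinants gives $L^{\otimes 2}\cong\mathcal{O}$, hence $L\cong\mathcal{O}$ since $\Pic(\mathbb{G}(1,N))\cong\mathbb{Z}$, and then $\Hom(S,S)\cong\mathbb{C}$ (the tautological bundle is simple) leaves only scalar maps. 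So the assignment $\ell\mapsto\phi_{\ell}$ is a section of a twisted group scheme, not a morphism from $\mathbb{G}(1,N)$ to the affine group $PGL(2)$; the ``proper source, affine target'' argument has nothing to apply to, and the kernel of the comparison homomorphism is trivial, not $PGL(2)$: any $\phi$ over $\mathbb{G}(1,N)$ descends, as in Proposition \ref{connFM}, to the relative product $\mathbb{P}(S)\times_{\mathbb{G}(1,N)}\cdots\times_{\mathbb{G}(1,N)}\mathbb{P}(S)$, restricts to the relative small diagonal $\mathbb{P}(S)$, and is therefore the identity.

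The same twist also destroys the forward inclusion you treated as given: for $N\geq 2$ the action (\ref{actiondeg1}) is not well defined. ``Acting with $\nu$ on $C_1$'' presupposes an identification of the degree-one component with a fixed $\mathbb{P}^1$, and replacing a representative $[\mathbb{P}^1,(x_1,\dots,x_n),\alpha]$ by the equivalent $[\mathbb{P}^1,(\sigma(x_1),\dots,\sigma(x_n)),\alpha\circ\sigma^{-1}]$ changes the proposed image from $[\mathbb{P}^1,(\nu(x_i)),\alpha]$ to $[\mathbb{P}^1,(\nu\sigma(x_i)),\alpha\circ\sigma^{-1}]$; these agree only when $\sigma$ and $\nu$ commute at the marked points. (When $N=1$ one can take $\alpha$ itself as the identification, which is why (\ref{actiondeg1}) coincides with (\ref{actcci}) there; for $N\geq 2$ a consistent algebraic parametrization of all lines is precisely a trivialization of $PGL(S)$, which does not exist.) Accordingly the statement itself, not only the proof, is false: for $n=1$ the space $\overline{M}_{0,1}(\mathbb{P}^N,1)$ is the point--line incidence variety $\mathbb{P}(T_{\mathbb{P}^N})$, and applying Theorem \ref{brion} to $\mathbb{P}(T_{\mathbb{P}^N})\rightarrow\mathbb{P}^N$ together with the simplicity of $T_{\mathbb{P}^N}$ gives $\Aut^{o}(\overline{M}_{0,1}(\mathbb{P}^N,1))\cong PGL(N+1)$, of dimension $(N+1)^2-1$, which cannot contain $PGL(2)\times PGL(N+1)$. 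Your own argument, with the section computation above in place of the hoped-for constancy, proves the corrected statement: $\Aut^{o}(\overline{M}_{0,n}(\mathbb{P}^N,1))\cong PGL(N+1)$ for every $n$ when $N\geq 2$, with no extra $PGL(2)$ factors (also for $n=2$), and no induction of the type of Lemma \ref{induction} can rescue the original claim since every base case already fails. In short, you identified the genuine gap precisely, but it cannot be filled; it is a gap in the paper's own proof as well, and closing it honestly changes the conclusion.
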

\begin{proof}
Let us consider the forgetful morphism $\pi:\overline{M}_{0,n}(\mathbb{P}^N,1)\rightarrow \overline{M}_{0,0}(\mathbb{P}^N,1)\cong\mathbb{G}(1,N)$, where $\mathbb{G}(1,N)$ is the Grassmannian of lines in $\mathbb{P}^N$. By \cite[Theorem 1.1]{Co} we have $\Aut^{o}(\mathbb{G}(1,N))\cong PGL(N+1)$, and Theorem \ref{brion} yields a surjective morphism of groups
$$
\begin{array}{cccc}
\chi: & \Aut^{o}(\overline{M}_{0,n}(\mathbb{P}^N,1)) & \longrightarrow & PGL(N+1)\\
      & \phi & \longmapsto & \overline{\phi}
\end{array}
$$
Note that $\pi:\overline{M}_{0,n}(\mathbb{P}^N,1)\rightarrow \overline{M}_{0,0}(\mathbb{P}^N,1)\cong\mathbb{G}(1,N)$ is a fibration with the Fulton-MacPherson compactification $\mathbb{P}^1[n]$ as the fiber.\\
Now, if $\chi(\phi)$ is the identity then $\phi$ induces an automorphism of the fiber $\pi^{-1}([C,\alpha])\cong\mathbb{P}^{1}[n]$ lying in $\Aut^{o}(\mathbb{P}^1[n])$. By Theorem \ref{autFMP1} we have that $\Aut^{o}(\mathbb{P}^1[n])\cong PGL(2)$ if $n\neq 2$, and $\Aut^{o}(\mathbb{P}^1[2])\cong PGL(2)\times PGL(2)$. Note that in any case $\Aut^{o}(\mathbb{P}^1[n])$ may be embedded in $\Aut^{o}(\overline{M}_{0,n}(\mathbb{P}^N,1))$ via the actions (\ref{actiondeg1}) and (\ref{actiondeg1n2}) in the cases $n\neq 2$ and $n=2$ respectively. Therefore, in any case we get an exact sequence 
$$0\mapsto \Aut^{o}(\mathbb{P}^1[n])\rightarrow  \Aut^{o}(\overline{M}_{0,n}(\mathbb{P}^N,1))\rightarrow PGL(N+1)\mapsto 0$$
To conclude, it is enough to observe that by (\ref{actcci}) the sequence above splits, and that the action (\ref{actcci}) commutes with both the actions (\ref{actiondeg1}) and (\ref{actiondeg1n2}).
\end{proof}

\subsubsection{Automorphisms of $C[n]$ and $\overline{M}_{g,n}$}
We can extend the main ideas in the computation of $\Aut(\mathbb{P}^1[n])$ to $C[n]$ where $C$ is a smooth curve of genus $g(C)\geq 2$.

\begin{Proposition}\label{druel2}
Let $X$ be a smooth projective variety with nef canonical divisor, and let $\Aut_{\Delta}(X^n)\subseteq \Aut(X^n)$ be the subgroup of automorphisms stabilizing the union of all the diagonals of codimension greater than one in $X^n$. Then we have an isomorphism $\Aut(X[n])\cong \Aut_{\Delta}(X^n)$.\\
In particular, if $X\cong C$ is a curve of genus $g(C)\geq 2$ then
$$\Aut(C[n])\cong S_n\times \Aut(C)$$
if $n\neq 2$, and $\Aut(C[2])\cong S_2\ltimes (\Aut(C)\times\Aut(C))$.
\end{Proposition}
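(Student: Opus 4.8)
The plan is to exhibit two mutually inverse group homomorphisms between $\Aut(X[n])$ and $\Aut_{\Delta}(X^n)$: one by lifting automorphisms of $X^n$ through the blow-up $g_n$, the other by descending automorphisms of $X[n]$. The whole difficulty is concentrated in the descent, where the nefness of $K_X$ is indispensable.

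For the lift, let $\overline{\phi}\in\Aut_{\Delta}(X^n)$. Since $\overline{\phi}$ stabilizes the union of the diagonals of codimension $\geq 2$, it permutes its irreducible components, namely the maximal such diagonals, and hence, by taking intersections, it permutes the whole poset of diagonals $\Delta_S$ while preserving their dimensions. As the symmetric construction of Proposition \ref{sym} blows these up in order of increasing dimension, the universal property of the (iterated) blow-up produces a lift $\widetilde{\phi}\in\Aut(X[n])$ with $g_n\circ\widetilde{\phi}=\overline{\phi}\circ g_n$; this is exactly how $S_n\times\Aut^{o}(X)$ already acts in Section \ref{actions}, and $\overline{\phi}\mapsto\widetilde{\phi}$ is an injective homomorphism.

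The heart of the argument is the descent. Given $\phi\in\Aut(X[n])$, I would first show that $\phi$ preserves the exceptional locus $E:=\Exc(g_n)=\bigcup_S E_S$. Since the blow-up centers have codimension $\geq 2$ we have $K_{X[n]}=g_n^{*}K_{X^n}+\sum_S a_S E_S$ with all $a_S\geq 1$, and because $K_{X^n}$ is nef every irreducible curve $B\not\subseteq E$ satisfies $K_{X[n]}\cdot B=K_{X^n}\cdot g_{n*}B+\sum_S a_S(E_S\cdot B)\geq 0$; hence every curve of negative canonical degree lies in $E$, while the fibre lines of the blow-up structure sweep out $E$ with negative canonical degree. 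Thus $E$ is the closure of the locus covered by $K_{X[n]}$-negative curves, an intrinsic datum preserved by $\phi$, so $\phi(E)=E$ and $\phi$ permutes the $E_S$. To conclude that $\phi$ descends I would then check that $g_n\circ\phi$ contracts every fibre of $g_n$: a positive-dimensional fibre $F$ is rationally connected, and $K_X$ nef forces $X$, hence $X^n$, to be non-uniruled, so the rationally connected image $g_n(\phi(F))\subseteq g_n(E)$ must be a point (for the curve case $g(C)\geq 2$ this is immediate, as $C^n$ contains no rational curve at all). The rigidity lemma then yields $\overline{\phi}\in\Aut(X^n)$ with $g_n\circ\phi=\overline{\phi}\circ g_n$, and since $\overline{\phi}(g_n(E))=g_n(E)$ is the union of the codimension $\geq 2$ diagonals, $\overline{\phi}\in\Aut_{\Delta}(X^n)$. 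The two constructions are visibly inverse, giving $\Aut(X[n])\cong\Aut_{\Delta}(X^n)$. I expect the verification that $g_n\circ\phi$ is constant on the fibres of $g_n$ to be the main obstacle, and it is precisely there that non-uniruledness (i.e. nefness of $K_X$) enters.

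Finally, for $X=C$ with $g(C)\geq 2$ I would compute $\Aut_{\Delta}(C^n)$ using Lemma \ref{autcp}, which gives $\Aut(C^n)\cong S_n\ltimes\Aut(C)^n$. Writing an element as a permutation $\sigma$ together with $(\gamma_1,\dots,\gamma_n)\in\Aut(C)^n$ and reducing to $\sigma=\mathrm{id}$, the image of a top-dimensional component $\Delta_{\{i,j,k\}}$ of $\bigcup_{|S|\geq 3}\Delta_S$ under $(x_\ell)\mapsto(\gamma_\ell(x_\ell))$ is again such a component only if $\gamma_i=\gamma_j=\gamma_k$; letting the triple range over $\{1,\dots,n\}$ (possible once $n\geq 3$) forces $\gamma_1=\dots=\gamma_n$. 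Hence $\Aut_{\Delta}(C^n)$ consists of the pairs $(\sigma,(\gamma,\dots,\gamma))$, and since the uniform $\gamma$ is fixed by the coordinate permutation the semidirect product degenerates to $\Aut_{\Delta}(C^n)\cong S_n\times\Aut(C)$, proving $\Aut(C[n])\cong S_n\times\Aut(C)$ for $n\neq 2$. For $n=2$ there are no diagonals of codimension $\geq 2$, so $\Aut_{\Delta}(C^2)=\Aut(C^2)\cong S_2\ltimes(\Aut(C)\times\Aut(C))$; as $C[2]\cong C^2$ (the blow-up of the Cartier divisor $\Delta_{1,2}$ is trivial), this yields $\Aut(C[2])\cong S_2\ltimes(\Aut(C)\times\Aut(C))$.
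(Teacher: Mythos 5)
Your overall strategy coincides with the paper's: the canonical-class formula $K_{X[n]} = g_n^{*}K_{X^n} + \sum_S a_S E_S$ with $a_S\geq 1$, nefness of $K_{X^n}$ to force $\phi$ to permute the exceptional divisors, a non-uniruledness argument to descend to $X^n$, the universal property of the blow-up for the lift, and Lemma \ref{autcp} for the curve case (your explicit computation of $\Aut_{\Delta}(C^n)$, which the paper leaves implicit, is correct). However, there is a genuine gap in the descent step for general $X$. You claim that, since $K_X$ nef makes $X^n$ non-uniruled, ``the rationally connected image $g_n(\phi(F))\subseteq g_n(E)$ must be a point.'' This inference is false: a non-uniruled variety with nef canonical class can perfectly well contain positive-dimensional rationally connected subvarieties. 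For instance, a K3 surface $S$ has $K_S=0$ nef and is not uniruled, yet it contains rational curves; hence $S^n$ and its diagonals contain rational curves, and nothing in your argument prevents $g_n(\phi(F))$ from being one of them. Non-uniruledness only forbids rational curves through a \emph{general} point, i.e., covering families; it says nothing about an individual rationally connected subvariety sitting in special position.

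The conclusion you want is true, but the proof must exploit the fact that the fibers of $g_n$ move. By your first step $\phi(E_S)=E_{S'}$ for some $S'$, and the fibers $F$ of $g_{n}|_{E_S}:E_S\rightarrow\Delta_S$ sweep out $E_S$, so the images $g_n(\phi(F))$ sweep out all of $g_n(E_{S'})=\Delta_{S'}$. If the general $\phi(F)$ were not contracted by $g_n$, then through a general point of $\Delta_{S'}$ there would pass a rational curve (the fibers of $g_n$ are iterated blow-ups of projective spaces, hence covered by rational curves contracted by $g_n$, and their images are then covered by rational curves), so $\Delta_{S'}\cong X^{n-|S'|+1}$ would be uniruled, contradicting the nefness of its canonical class. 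This covering-family argument is exactly how the paper proceeds: it fixes a \emph{general} point $y\in\Delta_{S'}$, a general point $q$ of the fiber over $y$, and a rational curve through $\phi^{-1}(q)$ in the corresponding fiber, and derives uniruledness of $\Delta_{S'}$. Note that your parenthetical for curves of genus at least $2$ is fine as it stands, since $C^n$ contains no rational curves at all; so the gap affects only the general statement $\Aut(X[n])\cong\Aut_{\Delta}(X^n)$, not the ``in particular'' part about $C[n]$.
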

\begin{proof}
The case $\dim(X) = 1$ and $n=2$ is in Lemma \ref{autcp}. Let $g_n:X[n]\rightarrow X^n$ be the blow-up morphism in Proposition \ref{sym}. For any $S = \{i_1,...,i_s\}\subset\{1,...,n\}$ we denote by $\Delta_S = \{(x_1,...,x_{n})\in X^{n}\:|\: x_{i_1} = ... = x_{i_s}\}$ the corresponding diagonal, and by $E_S\subset X[n]$ the exceptional divisor over $\Delta_S$. If $\dim(X)\geq 2$ then the canonical divisor of $X[n]$ is given by 
$$K_{X[n]} = g_n^{*}K_{X^n}+\sum_{2\leq |S|\leq n}((s-1)\dim(X)-1)E_S.$$
When $\dim(X) = 1$ and $n\geq 3$ the sum must be taken on the subsets $S$ such that $3\leq |S|\leq n$. However, this will not be relevant in our argument.\\ 
Let $\phi\in\Aut(X[n])$ be an automorphism, and let us assume that the exceptional divisor $E_S$ is not mapped to an exceptional divisor of $g_n$ via $\phi$. Let $C$ be a general rational curve in $E_S$ contracted by $g_n$, then $\phi(C)$ can not be contained in any exceptional divisor of $g_n$ otherwise $\phi(E_S)$ would be contained in such an exceptional divisor as well. This yields 
$$(\sum_{2\leq |S|\leq n}((s-1)\dim(X)-1)E_S)\cdot \phi(C)\geq 0$$
Furthermore, since $K_{X^n}$ is nef we have $g_n^{*}K_{X^n}\cdot\phi(C) = K_{X^n}\cdot g_{n*}\phi(C)\geq 0$, and hence $K_{X[n]}\cdot \phi(C)\geq 0$.\\
On the other hand, since $\phi$ is an automorphism we have
$$K_{X[n]}\cdot \phi(C) = \phi^{*}K_{X[n]}\cdot C = K_{X[n]}\cdot C$$
and since $g_n^{*}K_{X^n}\cdot C = 0$ we get
$$K_{X[n]}\cdot C = \sum_{2\leq |S|\leq n}((s-1)\dim(X)-1)E_S\cdot C < 0$$
A contradiction. Therefore, $\phi_{|E_{S}}$ defines an isomorphism between $E_{S}$ and an exceptional divisor $E_{S^{'}}$. Let us consider the restrictions of the blow-up morphism $g_{n|E_S}:E_S\rightarrow \Delta_S$, and $g_{n|E_{S^{'}}}:E_{S^{'}}\rightarrow \Delta_{S'}$.\\ 
Now, let $y\in \Delta_{S^{'}}$ be a general point, and $q\in g_{n|E_{S^{'}}}^{-1}(y)$ a general point in the fiber $F_q$ of $g_{n|E_{S^{'}}}$ over $y$. Let $F_p$ be the fiber of $ g_{n|E_S}$ through $p = \phi^{-1}(q)$. Consider a rational curve $C\subseteq F_p$ passing through $p$, then $\phi(C)$ passes through $q\in F_q$. Assume that $\phi(C)\nsubseteq F_q$. Then $g_n(\phi(C))\subset \Delta_{S^{'}}$ is a rational curve through $y\in \Delta_{S^{'}}$. This means that $\Delta_{S^{'}}$ is uniruled. On the other hand $\Delta_{S^{'}}\cong X^{n-|S^{'}|+1}$, and by hypothesis $K_X$ is nef. A contradiction.\\
We conclude that $\phi_{|E_S}:E_S\rightarrow E_{S^{'}}$ maps isomorphically fibers of $g_{n|E_{S}}$ to fibers of $g_{n|E_{S^{'}}}$. In particular $|S| = |S^{'}|$. Hence $\phi$ induces an automorphism $\overline{\phi}$ fitting in the following commutative diagram
  \[
  \begin{tikzpicture}[xscale=1.9,yscale=-1.2]
    \node (A0_0) at (0, 0) {$X[n]$};
    \node (A0_1) at (1, 0) {$X[n]$};
    \node (A1_0) at (0, 1) {$X^n$};
    \node (A1_1) at (1, 1) {$X^n$};
    \path (A0_0) edge [->]node [auto] {$\scriptstyle{\phi}$} (A0_1);
    \path (A1_0) edge [->]node [auto] {$\scriptstyle{\overline{\phi}}$} (A1_1);
    \path (A0_1) edge [->]node [auto] {$\scriptstyle{g_n}$} (A1_1);
    \path (A0_0) edge [->,swap]node [auto] {$\scriptstyle{g_n}$} (A1_0);
  \end{tikzpicture}
  \]
and furthermore $\overline{\phi}$ maps isomorphically any diagonal $\Delta_S$ to a diagonal of the same dimension, that is $\overline{\phi}\in \Aut_{\Delta}(X^n)$. This yields a morphism of groups
$$
\begin{array}{ccc}
\Aut(X[n])& \longrightarrow & \Aut_{\Delta}(X^n)\\
       \phi & \longmapsto & \overline{\phi}
\end{array}
$$
which clearly is an isomorphism. Finally, if $X\cong C$ is a curve of genus $g(C)\geq 2$ and $n\neq 2$ the statement follows from Lemma \ref{autcp}.   
\end{proof}

\begin{Remark}\label{GL2}
Proposition \ref{druel2} does not hold if $C$ has genus one. For instance, the group $GL(2,\mathbb{Z})$ of matrices with integer entries and determinant plus or minus one acts on $C\times C$ via
$$
\begin{array}{ccc}
GL(2,\mathbb{Z})\times (C\times C)& \longrightarrow & C\times C\\
 \left(\left(\begin{array}{cc}
 a_{1,1} & a_{1,2} \\ 
 a_{2,1} & a_{2,2}
 \end{array}\right) ,(x_1,x_2)\right) & \longmapsto & (a_{1,1}x_1\cdot a_{1,2}x_2,a_{2,1}x_1\cdot a_{2,2}x_2)
\end{array}
$$
where $x\cdot y$, with $x,y\in C$, stands for the multiplication of the group law on $C$.
\end{Remark}

Now let $X = C_1\times ...\times C_r$ be a product of curves. As in Lemma \ref{autcp} we denote by $[C_{r_1}],...,[C_{r_k}]$ the isomorphism classes of curves in $\{C_1,...,C_r\}$, where $r_i$ is the number of curves of class $[C_{r_i}]$. Let us consider the product $X^n$, and let $(x_1^j,...,x_r^j)$ be the coordinates on the $j$-th copy of $X$, so that a point in $X^n$ is given by $(x_i^j)$ with $i = 1,...,r$, $j = 1,...,n$. We take into account the three following group actions on $X^n$:
\stepcounter{Theorem}
\begin{equation}\label{Sr}
\begin{array}{ccc}
(S_{r_1}\times ...\times S_{r_k})\times X^n& \longrightarrow & X^n\\
 (\sigma,(x_i^j)) & \longmapsto & ((x_{\sigma(1)}^1,...,x_{\sigma(r)}^1),...,(x_{\sigma(1)}^n,...,x_{\sigma(r)}^n))
\end{array}
\end{equation}
where $\sigma$ must be interpreted as a permutation $\sigma\in S_{r_1}\times ...\times S_{r_k}\subseteq S_r$.
\stepcounter{Theorem}
\begin{equation}\label{SnProd}
\begin{array}{ccc}
S_n^{r}\times X^n& \longrightarrow & X^n\\
 (\sigma_1,...,\sigma_r,(x_i^j)) & \longmapsto & ((x^{\sigma_1(1)}_1,...,x^{\sigma_r(1)}_r),...,(x^{\sigma_1(n)}_1,...,x^{\sigma_r(n)}_r))
\end{array}
\end{equation}
\stepcounter{Theorem}
\begin{equation}\label{autcur}
\begin{array}{ccc}
\bigoplus_{i=1}^n\Aut(C_i)\times X^n& \longrightarrow & X^n\\
 (\alpha_1,...,\alpha_r,(x_i^j)) & \longmapsto & ((\alpha_1(x^{1}_1),...,\alpha_r(x_{r}^{1})),...,(\alpha_1(x^{n}_1),...,\alpha_r(x_{r}^{n})))
\end{array}
\end{equation}

\begin{Proposition}\label{druel1}
Let $X = C_1\times ...\times C_r$ be a product of curves with $g(C_i)\geq 2$ for any $i = 1,...,r$, and let $[C_{r_1}],...,[C_{r_k}]$ be the isomorphism classes of curves in $\{C_1,...,C_r\}$, where $r_i$ is the number of curves of class $[C_{r_i}]$. If $n\neq 2$ then
$$\Aut(X[n])\cong S_n\times((S_{r_1}\ltimes \Aut(C_{r_1})^{r_1})\times ...\times (S_{r_k}\ltimes \Aut(C_{r_k})^{r_k}))\cong S_n\times \Aut(X)$$
while if $n = 2$ and $r\geq 2$ we have
$$\Aut(X[2])\cong S_2^r\ltimes ((S_{r_1}\ltimes \Aut(C_{r_1})^{r_1})\times ...\times (S_{r_k}\ltimes \Aut(C_{r_k})^{r_k})) \cong S_2^r\ltimes \Aut(X)$$
Finally, if $n = 2$ and $r=1$ then $X = C_1$, and $\Aut(C_1[2])\cong S_2\ltimes(\Aut(C_1)\times \Aut(C_1))$.
\end{Proposition}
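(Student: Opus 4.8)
The plan is to reduce everything to the study of the diagonal-stabilizing automorphisms of $X^n$ and then to extract their combinatorial structure. Since each $C_i$ has $g(C_i)\geq 2$, the canonical divisor $K_X$ is ample, in particular nef, so Proposition \ref{druel2} applies and gives $\Aut(X[n])\cong\Aut_{\Delta}(X^n)$. Thus it suffices to compute the subgroup of $\Aut(X^n)$ stabilizing the union of the diagonals of codimension $\geq 2$. I would index the $rn$ curve factors of $X^n$ by pairs $(i,j)$ with $i\in\{1,\dots,r\}$ (the factor inside $X$) and $j\in\{1,\dots,n\}$ (the copy of $X$), writing coordinates $x_i^j$, and record that by Lemma \ref{autcp} every $\phi\in\Aut(X^n)$ is given by a permutation $\tau$ of the factor set respecting isomorphism classes together with curve isomorphisms $\phi_{(i,j)}$ on each factor. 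Here the diagonals are the $\Delta_S=\{x_i^j=x_i^{j'}\ \forall i,\ \forall j,j'\in S\}$, which only constrain the column structure; when $r\geq 2$ every such $\Delta_S$ with $|S|\geq 2$ already has codimension $(|S|-1)r\geq 2$, so the union equals $\bigcup_{j<j'}\Delta_{\{j,j'\}}$, whose irreducible components are the $\binom{n}{2}$ pairwise diagonals. The case $r=1$ is exactly the curve case already settled in Proposition \ref{druel2}, so the real content is $r\geq 2$.

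The containment $\supseteq$ is immediate: the actions (\ref{Sr}), (\ref{SnProd}) and (\ref{autcur}) manifestly send each $\Delta_S$ to a diagonal of the same type. For the reverse inclusion I would take $\phi\in\Aut_{\Delta}(X^n)$, so $\phi$ permutes the components $\Delta_{\{j,j'\}}$ and induces a permutation $\bar\sigma$ of the unordered pairs of columns. The key is a generic-point computation: fixing a pair $\{j,j'\}$ with image $\{a,b\}=\bar\sigma\{j,j'\}$ and a general point of $\Delta_{\{j,j'\}}$, I compare the two coordinates occupying row $i'$ and columns $a,b$ of the image. Because on $\Delta_{\{j,j'\}}$ the coordinates $x_i^j$ and $x_i^{j'}$ are tied together while all the others are free and independent, the equality of these two image coordinates as functions forced by $\phi(\Delta_{\{j,j'\}})\subseteq\Delta_{\{a,b\}}$ can only hold if $\tau^{-1}(i',a)$ and $\tau^{-1}(i',b)$ lie in the \emph{same} source row and in the source columns $\{j,j'\}$, and if the two corresponding curve isomorphisms coincide.

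From this local matching I would assemble the global structure. For $n\geq 3$, given an image row $i'$ and any three columns, pairwise application of the above shows that all factors of row $i'$ pull back to a single source row; hence $\tau$ has product form $\tau(i,j)=(\rho(i),\sigma_{i'}(j))$ with $\rho$ a permutation of rows within each isomorphism class, the curve isomorphisms $\phi_{(i,j)}=\psi_i$ depend only on the row, and each $\sigma_{i'}$ induces $\bar\sigma$ on pairs. Since the action of $S_n$ on $2$-element subsets is faithful for $n\geq 3$, all the $\sigma_{i'}$ coincide with one $\sigma\in S_n$. Thus $\phi$ is the composite of the simultaneous column permutation $\sigma$ (action (\ref{SnProd}) through the diagonal $S_n\hookrightarrow S_n^r$) and the diagonal automorphism of $X$ determined by $(\rho,\psi_i)$ via Lemma \ref{autcp} (actions (\ref{Sr}) and (\ref{autcur})). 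These two commute, giving $\Aut_{\Delta}(X^n)\cong S_n\times\Aut(X)$.

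For $n=2$ with $r\geq 2$ only the single diagonal $\Delta_{\{1,2\}}$ occurs, so the matching still forces, for each image row, one source row and one matched curve isomorphism, but there is no third column to synchronize the column swaps: each row may independently have its two columns transposed. This produces the extra factor $S_2^r$ of action (\ref{SnProd}), and since the row permutations inside $\Aut(X)$ permute these $r$ transpositions the product is not direct, yielding $\Aut(X[2])\cong S_2^r\ltimes\Aut(X)$; the remaining case $n=2$, $r=1$ is the statement of Proposition \ref{druel2} for $C_1[2]$. The main obstacle is precisely the rigidity step of the previous paragraph: ruling out automorphisms that mix rows and columns incompatibly, i.e.\ proving that $\tau$ is forced into product form and that for $n\geq 3$ the per-row column permutations are all equal. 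The generic-point-plus-curve-isomorphism matching is what makes this work and, pleasantly, it bypasses the line-graph automorphism subtleties (such as the exceptional symmetry of $L(K_4)$) that a purely combinatorial argument on the pairwise diagonals would have to confront.
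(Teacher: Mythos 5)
Your proof is correct and follows essentially the same route as the paper: reduce to $\Aut_{\Delta}(X^n)$ via Proposition \ref{druel2}, use Lemma \ref{autcp} to describe $\Aut(X^n)$, and then show that modulo the diagonal copy of $\Aut(X)$ every diagonal-preserving automorphism acts by per-row column permutations, which are forced to agree across rows exactly when $n\neq 2$. Your generic-point matching argument is precisely the detailed justification of the step the paper compresses into the assertion that $\coker(i)$ is a subgroup of $S_n^r$, and your faithfulness-on-pairs observation is the paper's claim that $(\sigma_1,\dots,\sigma_r)$ preserves the union of the diagonals if and only if $\sigma_1=\dots=\sigma_r$; so you have supplied full details for steps the paper only sketches.
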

\begin{proof}
The case $n = 2$, $r = 1$ is just the last part of Proposition \ref{druel2}. Now, by Proposition \ref{druel2} we may identify $\Aut(X[n])$ with the subgroup $\Aut_{\Delta}(X^n)\subseteq \Aut(X^n)$ of automorphisms stabilizing the union of all the diagonals of codimension greater than one in $X^n$.\\
Note that $X^n$ is just a product of curves, and by Lemma \ref{autcp} we know the structure of its automorphism group. The actions (\ref{Sr}) and (\ref{autcur}) yield an injective morphism $i:(S_{r_1}\ltimes \Aut(C_{r_1})^{r_1})\times ...\times (S_{r_k}\ltimes \Aut(C_{r_k})^{r_k})\hookrightarrow \Aut_{\Delta}(X^n)$. Now, by Lemma \ref{autcp} $\coker(i)$ is forced to be a subgroup of the group $S_n^r$ in (\ref{SnProd}).\\
If $n = 2$, $r\geq 2$. Then any automorphism in (\ref{SnProd}) preserves the diagonal $\Delta_{1,2}$, that is $\coker(i)\cong S_2^r$. To conclude it is enough to observe that (\ref{SnProd}) induces a section $S_2^r\rightarrow \Aut_{\Delta}(X^n)$.\\
If $n\neq 2$ then $(\sigma_1,...,\sigma_r)$ preserves the union of all the diagonals $\Delta\subset X^n$ if and only if $\sigma_1 = ... = \sigma_r$. This yields that $\coker(i)\cong S_n$ is given by the diagonal action of $S_n$ in (\ref{SnProd}). Again we have a section $S_n\rightarrow  \Aut_{\Delta}(X^n)$, and to conclude it is enough to observe the actions of $S_n$ and $(S_{r_1}\ltimes \Aut(C_{r_1})^{r_1})\times ...\times (S_{r_k}\ltimes \Aut(C_{r_k})^{r_k})$ commute.
\end{proof}

Let $\overline{M}_{g,n}$ be the Deligne-Mumford compactification of the moduli space $M_{g,n}$ parametrizing smooth genus $g$ curves with $n$ marked points. Thanks to Proposition \ref{druel2} we can provide a simple proof of the main theorem on $\Aut(\overline{M}_{g,n})$ in \cite{Ma} when $g\geq 3$.

\begin{Corollary}\label{mgn}
If $g\geq 3$ then $\Aut(\overline{M}_{g,n})\cong S_n$ for any $n\geq 1$.
\end{Corollary}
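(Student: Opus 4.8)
The plan is to exploit the forgetful morphism $\pi:\overline{M}_{g,n}\rightarrow\overline{M}_g$ that forgets all the marked points and stabilizes the domain. Since $g\geq 3$, its general fiber over a point $[C]\in\overline{M}_g$ is the Fulton-MacPherson space $C[n]$, and for a general such $[C]$ the curve $C$ has no non-trivial automorphisms, so Proposition \ref{druel2} gives $\Aut(C[n])\cong S_n$ (respectively $\Aut(C[2])\cong S_2$), a finite group. The permutation action on the marked points yields an inclusion $S_n\hookrightarrow\Aut(\overline{M}_{g,n})$, and the goal is to prove that it is an isomorphism.

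First I would show that $\Aut^{o}(\overline{M}_{g,n})$ is trivial. As $\pi$ is proper with connected fibers we have $\pi_{*}\mathcal{O}_{\overline{M}_{g,n}}\cong\mathcal{O}_{\overline{M}_g}$, so Theorem \ref{brion} applied to $G=\Aut^{o}(\overline{M}_{g,n})$ produces a $G$-action on $\overline{M}_g$ making $\pi$ equivariant. Since $\overline{M}_g$ has finite automorphism group, $\Aut^{o}(\overline{M}_g)$ is trivial, and hence $G$ preserves every fiber of $\pi$. Restriction to a general fiber then gives a homomorphism $G\rightarrow\Aut(C[n])$ whose target is finite by Proposition \ref{druel2}; as $G$ is connected its image is trivial, so $G$ fixes the general fiber pointwise and therefore fixes a dense open subset of $\overline{M}_{g,n}$. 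By Remark \ref{flaut} we conclude $G=\{\mathrm{id}\}$, so that $\Gamma:=\Aut(\overline{M}_{g,n})$ is finite.

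It remains to identify $\Gamma$ with $S_n$, and the key reduction is that every $\phi\in\Gamma$ preserves $\pi$, i.e. there is $\overline{\phi}\in\Aut(\overline{M}_g)$ with $\pi\circ\phi=\overline{\phi}\circ\pi$. Granting this, I would argue as follows. A general fiber $F=\pi^{-1}([C])\cong C[n]$ is carried isomorphically by $\phi$ onto $F'=\pi^{-1}(\overline{\phi}([C]))\cong C'[n]$. Inside $C[n]$ the blow-down $g_n\colon C[n]\rightarrow C^{n}$ is exactly the contraction of all the rational curves of $C[n]$: its exceptional locus is covered by rational curves, while $C^{n}$ contains none since $g(C)\geq 2$, so every rational curve is $g_n$-contracted. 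As an isomorphism preserves rationality of curves, $\phi|_F$ matches the $g_n$-contracted curves on the two sides and descends to an isomorphism $C^{n}\cong {C'}^{n}$; by Lemma \ref{fibg2} this forces $C\cong C'$. Thus for general $[C]$ the automorphism $\overline{\phi}$ fixes $[C]$, whence $\overline{\phi}=\mathrm{id}$ and $\phi$ preserves each fiber. Restricting $\phi$ to the general fiber $C[n]$ and invoking Proposition \ref{druel2} produces a permutation $\sigma_\phi\in S_n$; replacing $\phi$ by $\sigma_\phi^{-1}\circ\phi\in\Gamma$ we may assume $\phi$ restricts to the identity on $F$, hence fixes a dense open subset of $\overline{M}_{g,n}$, so $\phi=\mathrm{id}$ by Remark \ref{flaut}. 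Therefore $\Gamma=S_n$.

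The main obstacle is precisely the reduction just invoked, that every $\phi\in\Gamma$ preserves the fibration $\pi$ (equivalently, that $\phi$ sends $\pi$-vertical subvarieties to $\pi$-vertical ones). This is where the global geometry of $\overline{M}_{g,n}$ must enter: $\phi$ preserves the boundary $\partial\overline{M}_{g,n}$ and permutes its irreducible components, and one has to check that this permutation respects the combinatorial type of the boundary strata in a way compatible with the contraction to $\overline{M}_g$; this is the point at which the analysis of \cite{Ma} is genuinely needed. The simplification offered here is that, once $\pi$ is known to be preserved, the entire fiberwise discussion collapses to the clean identity $\Aut(C[n])\cong S_n$ of Proposition \ref{druel2}, so that no direct study of the full tautological or boundary structure of $\overline{M}_{g,n}$ is required beyond this step.
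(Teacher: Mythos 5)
Your reduction is exactly where the proof lives, and you have not supplied it: the claim that every $\phi\in\Aut(\overline{M}_{g,n})$ preserves the fibration $\pi$ (equivalently, descends to an automorphism $\overline{\phi}$ of $\overline{M}_g$) is the entire content of the statement, and deferring it to ``the analysis of \cite{Ma}'' is circular, since this corollary is precisely meant to give a short proof of the main result of \cite{Ma}. The paper closes this gap with a concrete external input that your proposal never invokes: \cite[Theorem 0.9]{GKM}, which guarantees that any fibration of $\overline{M}_{g,n}$ (for $g\geq 2$) factors through a forgetful morphism. Applying this to the compositions $\pi_i\circ\phi^{-1}:\overline{M}_{g,n}\rightarrow\overline{M}_{g,n-1}$ yields at once a permutation $\sigma_{\phi}\in S_n$ with $\pi_i\circ\phi^{-1}$ factoring through $\pi_{\sigma_{\phi}(i)}$, a surjective homomorphism $\chi:\Aut(\overline{M}_{g,n})\rightarrow S_n$, and the induced automorphism $\overline{\phi}$ of $\overline{M}_g$. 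From there the paper argues much as you do: $\overline{\phi}$ fixes a general $[C]$ because $\phi^{-1}$ identifies the fibers $C[n]$ and $\Gamma[n]$ and this forces $C\cong\Gamma$, and $\ker(\chi)$ is trivial by Proposition \ref{druel2}, the generic triviality of $\Aut(C)$ for $g\geq 3$, and Remark \ref{flaut}. Note also that once the GKM theorem is in place, your preliminary step showing $\Aut^{o}(\overline{M}_{g,n})$ is trivial becomes superfluous; the paper never needs it.

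A secondary problem: even that preliminary step leans on the assertion that $\overline{M}_g$ has finite automorphism group, which is not free — it is essentially the $n=0$ instance of the theorem of \cite{Ma} you are trying to reprove, and it cannot be obtained by soft arguments (uniruledness obstructions fail, since $\overline{M}_g$ is uniruled for small $g$). So as written the proposal both rests on a circular citation at its central step and quietly imports a nontrivial theorem at its first step. The fiberwise part of your argument — rational curves in $C[n]$ are exactly the $g_n$-contracted ones, so an isomorphism of fibers descends to $C^n\cong\Gamma^n$, and then Lemma \ref{fibg2} gives $C\cong\Gamma$ — is correct and makes explicit what the paper leaves implicit, but it does not repair the missing reduction.
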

\begin{proof}
Let $\phi\in \Aut(\overline{M}_{g,n})$ be an automorphism. By \cite[Theorem 0.9]{GKM} for any forgetful morphism $\pi_i:\overline{M}_{g,n}\rightarrow\overline{M}_{g,n-1}$ the composition $\pi_i\circ\phi^{-1}:\overline{M}_{g,n}\rightarrow\overline{M}_{g,n-1}$ factors though a forgetful morphism $\pi_{\sigma(i)}:\overline{M}_{g,n}\rightarrow\overline{M}_{g,n-1}$. Therefore, we get the following commutative diagram  
 \[
  \begin{tikzpicture}[xscale=2.5,yscale=-1.2]
    \node (A0_0) at (0, 0) {$\overline{M}_{g,n}$};
    \node (A0_1) at (1, 0) {$\overline{M}_{g,n}$};
    \node (A1_0) at (0, 1) {$\overline{M}_{g}$};
    \node (A1_1) at (1, 1) {$\overline{M}_{g}$};
    \path (A0_0) edge [->]node [auto] {$\scriptstyle{\phi^{-1}}$} (A0_1);
    \path (A0_0) edge [->,swap]node [auto] {$\scriptstyle{\pi_{\sigma_{\phi}(1)}\times ...\times \pi_{\sigma_{\phi}(n)}}$} (A1_0);
    \path (A0_1) edge [->]node [auto] {$\scriptstyle{\pi_1\times ...\times \pi_n}$} (A1_1);
    \path (A1_0) edge [->]node [auto] {$\scriptstyle{\overline{\phi}}$} (A1_1);
  \end{tikzpicture}
  \]
and the surjective morphism of groups
$$
\begin{array}{cccc}
\chi: &\Aut(\overline{M}_{g,n})& \longrightarrow & S_{n}\\
      & \phi & \longmapsto & \sigma_{\phi}
\end{array}
$$
Let $[C]\in \overline{M}_{g}$ be a general point, and let $[\Gamma] = \overline{\phi}([C])$. Note that since both $C$ and $\Gamma$ have a trivial automorphism group, the fibers of $\pi_{\sigma_{\phi}(1)}\times ...\times \pi_{\sigma_{\phi}(n)}$ over $[C]$ and of $\pi_1\times ...\times \pi_n$ over $[\Gamma]$ are nothing but the Fulton-MacPherson compactifications $C[n]$ and $\Gamma[n]$ respectively. Therefore, $\phi^{-1}$ induces an isomorphism between $C[n]$ and $\Gamma[n]$, and this yields $C\cong \Gamma$, and $\overline{\phi} = Id_{\overline{M}_{g}}$. Now, if $\chi(\sigma)$ is the trivial permutation then $\phi$ restricts to an automorphism of the general fiber $C[n]$ of $\pi_1\times ...\times \pi_n$.\\ 
By Proposition \ref{druel2} $\phi_{|C[n]}$ acts as a combination of a permutation and an automorphism of $C$. On the other hand, since $\phi\in \ker(\chi)$ the permutation must be trivial, and since $C$ is a general curve of genus $g\geq 3$ then $\Aut(C)$ is trivial as well.\\
This means that $\phi$ restricts to the identity on the general fiber of $\pi_1\times ....\times \pi_n$. To conclude it is enough to recall Remark \ref{flaut}.
\end{proof}

\subsection{Kontsevich spaces parametrizing rational normal curves}
Let us consider the Kontsevich space $\overline{M}_{0,n+3}(\mathbb{P}^{n},n)$ parametrizing degree $n$ rational normal curves in $\mathbb{P}^n$. It is well known that through $n+3$ general points in $\mathbb{P}^n$ there is a unique rational normal curve of degree $n$, that is the evaluation morphism
$$ev:=ev_1\times ...\times ev_{n+3}:\overline{M}_{0,n+3}(\mathbb{P}^n,n)\rightarrow(\mathbb{P}^n)^{n+3}$$
is birational. Therefore, we may adapt the argument in the proof of Proposition \ref{connFM} to prove that the connected component of the identity of $\Aut(\overline{M}_{0,n+3}(\mathbb{P}^n,n))$ is isomorphic to $PGL(n+1)$. However, a little improvement is at hand if we take into account Kapranov's construction of the moduli space of Deligne-Mumford stable $n$-pointed rational curves $\overline{M}_{0,n}$. Thanks to \cite[Theorem 0.1]{Ka} we may consider $\overline{M}_{0,n+2}(\mathbb{P}^n,n)$ instead of $\overline{M}_{0,n+3}(\mathbb{P}^n,n)$ in order to prove the following result.

\begin{Proposition}\label{connRNC}
For any $n\geq 3$ and $k\geq n+2$ we have $\Aut^{o}(\overline{M}_{0,k}(\mathbb{P}^n,n))\cong PGL(n+1)$.
\end{Proposition}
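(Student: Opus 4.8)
The plan is to prove the single base case $k=n+2$ and then propagate it to all $k\ge n+2$ by the inductive Lemma \ref{induction}, applied with $X=\mathbb{P}^n$, $\beta=n[L]$, and $\Aut^{o}(\mathbb{P}^n)\cong PGL(n+1)$. The number of marked points $n+2$ is $\ge 5$ precisely because $n\ge 3$, so the hypothesis of Lemma \ref{induction} is met and it yields $\Aut^{o}(\overline{M}_{0,k}(\mathbb{P}^n,n))\cong PGL(n+1)$ for every $k\ge n+2$. This is exactly the point of invoking Kapranov: working with $n+2$ rather than $n+3$ marked points lowers the base case by one and lets the induction reach every $k\ge n+2$ instead of only $k\ge n+3$.

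For the base case I would mimic the proof of Proposition \ref{connFM}, replacing the blow-up $g_n$ by the evaluation morphism
$$ev:=ev_1\times\dots\times ev_{n+2}\colon\overline{M}_{0,n+2}(\mathbb{P}^n,n)\longrightarrow(\mathbb{P}^n)^{n+2},$$
which is dominant since through $n+2$ general points there is a positive-dimensional family of degree $n$ rational normal curves. By \cite[Theorem 0.1]{Ka} the general fibre of $ev$ is isomorphic to the Kapranov model $\overline{M}_{0,n+2}$ of rational normal curves of degree $n$ through $n+2$ fixed general points; in particular $ev$ has connected fibres over the normal target, so $ev_{*}\mathcal{O}\cong\mathcal{O}$ by Stein factorization. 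Theorem \ref{brion} then descends any $\phi\in\Aut^{o}(\overline{M}_{0,n+2}(\mathbb{P}^n,n))$ to an automorphism $\overline{\phi}\in\Aut^{o}((\mathbb{P}^n)^{n+2})$, and by Lemma \ref{autprod} I may write $\overline{\phi}=(\phi_1,\dots,\phi_{n+2})$ with $\phi_i\in PGL(n+1)$.

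Next I would force $\overline{\phi}$ to be diagonal, exactly as the small diagonal was used in Proposition \ref{connFM}. The fibre dimension of $ev$ jumps along the big diagonal $\bigcup_{i<j}\{p_i=p_j\}$, because an irreducible embedded degree $n$ curve cannot separate two distinct marked points, so over that locus the whole fibre lies in the boundary; this degeneration locus is preserved by $\phi$, hence by $\overline{\phi}$. Since the connected automorphism $\overline{\phi}$ cannot permute the factors, it must send each component $\{p_i=p_j\}$ to itself, which forces $\phi_i=\phi_j$ for all $i,j$; write $\mu$ for the common value. This produces a homomorphism $\chi\colon\Aut^{o}(\overline{M}_{0,n+2}(\mathbb{P}^n,n))\to PGL(n+1)$, $\phi\mapsto\mu$, which is surjective because the post-composition action (\ref{actcci}) of $PGL(n+1)$ descends under $ev$ to the diagonal action on $(\mathbb{P}^n)^{n+2}$ and hence splits $\chi$.

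For injectivity, if $\chi(\phi)=\mathrm{id}$ then $\overline{\phi}=\mathrm{id}$, so $\phi$ preserves every fibre of $ev$ and restricts to the general fibre $\cong\overline{M}_{0,n+2}$ as an element of its connected automorphism group. Here is the decisive input: since $n+2\ge 5$, the group $\Aut(\overline{M}_{0,n+2})$ is finite (it is $S_{n+2}$, cf. \cite{BM}), so $\Aut^{o}(\overline{M}_{0,n+2})$ is trivial; thus $\phi$ restricts to the identity on a general fibre, hence on a dense open set, and Remark \ref{flaut} gives $\phi=\mathrm{id}$ (the splitting by the $PGL(n+1)$-action then also kills any residual finite part of $\ker\chi$). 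Combining the base case with Lemma \ref{induction} yields the claim. I expect the main obstacle to lie in the base case, and specifically in the two geometric inputs supplied by Kapranov's theorem: that the general fibre of $ev$ is \emph{genuinely} $\overline{M}_{0,n+2}$ (so that its connected automorphism group is trivial, which is what injectivity rests on) and that $ev_{*}\mathcal{O}\cong\mathcal{O}$; by contrast the ``force the diagonal'' step is a routine transcription of Proposition \ref{connFM}.
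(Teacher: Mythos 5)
Your overall architecture is the same as the paper's: descend $\phi\in\Aut^{o}(\overline{M}_{0,n+2}(\mathbb{P}^n,n))$ along $ev=ev_1\times\cdots\times ev_{n+2}$ via Theorem \ref{brion}, using Kapranov's theorem both for $ev_{*}\mathcal{O}\cong\mathcal{O}$ and for the identification of the general fibre with $\overline{M}_{0,n+2}$; write $\overline{\phi}=(\phi_1,\dots,\phi_{n+2})$ by Lemma \ref{autprod} and force it to be diagonal; get surjectivity of $\chi$ from the $PGL(n+1)$-action and injectivity from $\Aut(\overline{M}_{0,n+2})\cong S_{n+2}$ (your parenthetical, using the splitting to kill a possible finite residual kernel, is in fact more careful than the paper's corresponding step); then propagate to all $k\geq n+2$ by Lemma \ref{induction}, exactly as the paper does.

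The genuine gap is in the diagonalization step. You claim $\overline{\phi}$ must send each component $\{p_i=p_j\}$ of the big diagonal to itself because the big diagonal is ``the degeneration locus'' of $ev$. There are two problems. First, the condition ``the whole fibre lies in the boundary'' is not known to be $\overline{\phi}$-stable: that would require proving that $\phi$ preserves the boundary of the Kontsevich space, which you have not done; the invariant that is automatically preserved is the fibre dimension, since $F_{\overline{\phi}(y)}=\phi(F_y)$. Second, and more seriously, the locus where the fibre dimension of $ev$ jumps is strictly larger than the big diagonal: for instance, over the codimension-two locus of configurations of $n+2$ \emph{distinct} points contained in a hyperplane $H\cong\mathbb{P}^{n-1}$ the fibre contains the family of degree-$n$ rational curves in $H$ through those points, which has dimension $n>n-1$, and there are further special-position loci with jumps. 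So ``$\overline{\phi}$ preserves the jumping locus'' does not imply ``$\overline{\phi}$ preserves each $\{p_i=p_j\}$'' without additional work: you would need to show that the $\{p_i=p_j\}$ are irreducible \emph{components} of the jumping locus (a local fibre-dimension analysis near their general points), and that their images under $\overline{\phi}$, which are twisted diagonals $\{q_j=\phi_j\phi_i^{-1}(q_i)\}$, can lie in the jumping locus only when the twist is trivial (a general point of a nontrivially twisted diagonal is a configuration of distinct points in linear general position, over which there is no jump). None of this is in your write-up. The paper sidesteps all of it by using the \emph{small} diagonal instead: the fibre dimension attains its maximum (namely $n^2+2n-2$, from rational normal curves through one point carrying a contracted $(n+3)$-pointed tree) exactly on $\Delta_{1,\dots,n+2}$, so this locus is automatically $\overline{\phi}$-stable, and $\overline{\phi}(\Delta)=\Delta$ immediately gives $\phi_1=\cdots=\phi_{n+2}$. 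Replacing your big-diagonal argument by this one (or supplying the missing component analysis) makes your proof complete.
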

\begin{proof}
By Theorem \cite[Theorem 0.1]{Ka} the morphism
$$\rho\times ev_{1}\times...\times ev_{n+2}:\overline{M}_{0,n+2}(\mathbb{P}^{n},n)\rightarrow\overline{M}_{0,n+2}\times(\mathbb{P}^{n})^n$$
is an isomorphism on the open subset of $(\mathbb{P}^{n})^n$ parametrizing points in linear general position, and the projection on $(\mathbb{P}^{n})^n$
  \[
  \begin{tikzpicture}[xscale=6.5,yscale=-1.2]
    \node (A0_0) at (0, 0) {$\overline{M}_{0,n+2}(\mathbb{P}^{n},n)$};
    \node (A0_1) at (1, 0) {$\overline{M}_{0,n+2}\times(\mathbb{P}^{n})^n$};
    \node (A1_1) at (1, 1) {$(\mathbb{P}^{n})^n$};
    \path (A0_0) edge [->]node [auto] {$\scriptstyle{\rho\times ev_{1}\times...\times ev_{n+2}}$} (A0_1);
    \path (A0_1) edge [->]node [auto] {$\scriptstyle{\pi_2}$} (A1_1);
    \path (A0_0) edge [->]node [auto,swap] {$\scriptstyle{\pi}$} (A1_1);
  \end{tikzpicture}
  \]
gives a fibration $\pi$ of $\overline{M}_{0,n+2}(\mathbb{P}^{n},n)$ whose general fiber is isomorphic to $\overline{M}_{0,n+2}$. Since $\rho\times ev_{1}\times...\times ev_{n+2}$ is birational, and $\pi_2$ is a morphism with connected fibers between smooth varieties we have $\pi_{*}\mathcal{O}_{\overline{M}_{0,n+2}(\mathbb{P}^{n},n)}\cong\mathcal{O}_{(\mathbb{P}^{n})^n}$. Therefore, we may apply Theorem \ref{brion} with $G = \Aut^{o}(\overline{M}_{0,n+2}(\mathbb{P}^{n},n))$ and $f = \pi$. For any $\phi\in\Aut^{o}(\overline{M}_{0,n+2}(\mathbb{P}^{n},n))$ this yields an automorphism $\overline{\phi}\in \Aut^{o}((\mathbb{P}^{n})^n))$ such that the diagram 
 \[
  \begin{tikzpicture}[xscale=3.5,yscale=-1.2]
    \node (A0_0) at (0, 0) {$\overline{M}_{0,n+2}(\mathbb{P}^{n},n)$};
    \node (A0_1) at (1, 0) {$\overline{M}_{0,n+2}(\mathbb{P}^{n},n)$};
    \node (A1_0) at (0, 1) {$(\mathbb{P}^{n})^n$};
    \node (A1_1) at (1, 1) {$(\mathbb{P}^{n})^n$};
    \path (A0_0) edge [->]node [auto] {$\scriptstyle{\phi}$} (A0_1);
    \path (A1_0) edge [->]node [auto] {$\scriptstyle{\overline{\phi}}$} (A1_1);
    \path (A0_1) edge [->]node [auto] {$\scriptstyle{\pi}$} (A1_1);
    \path (A0_0) edge [->]node [auto,swap] {$\scriptstyle{\pi}$} (A1_0);
  \end{tikzpicture}
  \]
is commutative. Now, note that for any $(p_1,...,p_n)\in (\mathbb{P}^{n})^n$ there is a dense open subset of the fiber $F_{p_1,...,p_n} = \pi^{-1}(p_1,...,p_n)$ parametrizing rational normal curves in $\mathbb{P}^n$ through $p_1,...,p_n$. Let $x = (x,...,x)\in (\mathbb{P}^{n})^n$ be a point in the small diagonal $\Delta_{1,...,n}\cong\mathbb{P}^n$, and let $F_x = \pi^{-1}(x)$. Note that $y\in (\mathbb{P}^{n})^n\setminus\Delta_{1,...,n}$ forces $\dim(F_y)<\dim(F_x)$, where $F_y = \pi^{-1}(y)$. Therefore, $\overline{\phi}$ restricts to an automorphism of $\Delta_{1,...,n}\cong\mathbb{P}^n$, and we get a morphism of groups
$$
\begin{array}{cccc}
\chi: &\Aut^{o}(\overline{M}_{0,n+2}(\mathbb{P}^{n},n)) & \longrightarrow & PGL(n+1)\\
      & \phi & \longmapsto & \overline{\phi}
\end{array}
$$
Furthermore, by Section \ref{actions} $\chi$ is surjective. Finally we prove that $\chi$ is also injective. If $\overline{\phi} = \chi(\phi) = Id_{\mathbb{P}^n}$ then $\phi$ restricts to an automorphism of the general fiber of $\pi$. We know that such a general fiber is isomorphic to $\overline{M}_{0,n+2}$. Now, by \cite[Theorem 3]{BM} the automorphism group of $\overline{M}_{0,n+2}$ is isomorphic to the symmetric group $S_{n+2}$ for any $n\geq 3$.\\ 
This means that $\phi$ restricts to a permutation in $S_{n+2}$ on the general fiber of $\pi$, and since $\phi$ is in the connected component of the identity it must restrict to the trivial permutation on such a general fiber. Again to conclude it is enough to recall Remark \ref{flaut}. Finally, since $n\geq 3$ we have $n+2\geq 5$ and by Lemma \ref{induction} we conclude that $\Aut^{o}(\overline{M}_{0,k}(\mathbb{P}^n,n))\cong PGL(n+1)$ for $k\geq n+2$. 
\end{proof}

In the following we relate the automorphisms of the coarse moduli space $\overline{M}_{0,n}(\mathbb{P}^{N},d)$ with those of the stack $\overline{\mathcal{M}}_{0,n}(\mathbb{P}^{N},d)$.

\begin{Corollary}\label{corRNC}
For any $n\geq 3$ and $k\geq n+2$ we have $\Aut^{o}(\overline{\mathcal{M}}_{0,k}(\mathbb{P}^n,n))\cong PGL(n+1)$.
\end{Corollary}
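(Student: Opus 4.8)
The plan is to compare $\Aut^{o}$ of the stack directly with $\Aut^{o}$ of its coarse moduli space, which we already control by Proposition \ref{connRNC}. First I would invoke the universal property of the coarse moduli space: the coarse map $c:\overline{\mathcal{M}}_{0,k}(\mathbb{P}^n,n)\rightarrow\overline{M}_{0,k}(\mathbb{P}^n,n)$ is initial among morphisms from the stack to algebraic spaces, so every automorphism of the stack descends uniquely to an automorphism of the coarse space making $c$ equivariant. This yields a homomorphism of groups
$$d:\Aut^{o}(\overline{\mathcal{M}}_{0,k}(\mathbb{P}^n,n))\longrightarrow\Aut^{o}(\overline{M}_{0,k}(\mathbb{P}^n,n))\cong PGL(n+1),$$
the last isomorphism being exactly Proposition \ref{connRNC}.

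Next I would exhibit a section of $d$. The action (\ref{actcci}) of $PGL(n+1)=\Aut^{o}(\mathbb{P}^n)$ by post-composition $\alpha\mapsto\mu\circ\alpha$ is defined at the level of families of stable maps, hence it is a genuine action on the stack, i.e. a homomorphism $s:PGL(n+1)\rightarrow\Aut^{o}(\overline{\mathcal{M}}_{0,k}(\mathbb{P}^n,n))$. Since the induced action on the coarse space is precisely the $PGL(n+1)$-action that generates $\Aut^{o}(\overline{M}_{0,k}(\mathbb{P}^n,n))$ in the proof of Proposition \ref{connRNC}, one has $d\circ s=\mathrm{id}$; in particular $d$ is surjective and split.

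It then remains to prove that $d$ is injective, and this is where I expect the real work to be. The key observation is that $\overline{\mathcal{M}}_{0,k}(\mathbb{P}^n,n)$ has \emph{generically trivial stabilizers}: since $k\geq n+2$, a general point parametrizes a smoothly embedded rational normal curve $(C,(x_1,\dots,x_k),\alpha)$ with $\alpha$ a closed immersion, so any automorphism $\tau$ of the stable map satisfies $\alpha\circ\tau=\alpha$ and is therefore the identity. Consequently $c$ restricts to an isomorphism over the dense open substack $\mathcal{U}$ of automorphism-free stable maps. If $\Phi\in\Aut^{o}(\overline{\mathcal{M}}_{0,k}(\mathbb{P}^n,n))$ satisfies $d(\Phi)=\mathrm{id}$, then over $\mathcal{U}$, where $c$ is an isomorphism onto its image, $\Phi$ must agree with the identity. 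Since the locus where $\Phi$ coincides with the identity is closed and contains the dense open $\mathcal{U}$, the stacky analogue of Remark \ref{flaut} forces $\Phi=\mathrm{id}$.

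Combining the section $s$ with the injectivity of $d$ shows that $d$ is an isomorphism, giving $\Aut^{o}(\overline{\mathcal{M}}_{0,k}(\mathbb{P}^n,n))\cong PGL(n+1)$. The main obstacle is the injectivity step: it requires making precise that an automorphism of the stack which is trivial on a dense open substack is globally trivial, which rests on the generic representability just established together with the rigidity of automorphisms on a reduced separated stack. Everything else is formal, and the case $n\geq 3$, $k\geq n+2$ is exactly the range in which generic representability and Proposition \ref{connRNC} both apply.
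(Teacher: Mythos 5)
Your proposal is correct and is in substance the paper's own proof: the paper simply invokes Proposition \ref{connRNC} together with \cite[Proposition 1.7]{FaM}, the latter being exactly the statement you re-derive by hand (for a smooth Deligne-Mumford stack with trivial generic stabilizer, automorphisms of the stack are identified with automorphisms of its coarse moduli space). The injectivity step you flag as the main obstacle---promoting the agreement of $\Phi$ with the identity from the automorphism-free locus to the whole stack, via rigidity on a reduced separated stack---is precisely the content of that cited proposition, so your outline matches the intended argument.
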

\begin{proof}
Since $\overline{\mathcal{M}}_{0,k}(\mathbb{P}^n,n)$ is a smooth Deligne-Mumford stack with trivial general stabilizer it is enough to apply Proposition \ref{connRNC} and \cite[Proposition 1.7]{FaM}.
\end{proof}

\subsection{A conjecture on the automorphism group of $X[n]$}
From Proposition \ref{sym} it is clear that the diagonal action of $S_n\times \Aut(X)$ on $X^n$ lifts to the Fulton-MacPherson compactification $X[n]$. It is natural to ask whether this action gives the full automorphism group of $X[n]$.\\
By Proposition \ref{druel2} when $K_X$ is nef we may hope to control the automorphisms of $X[n]$. On the other hand, Remark \ref{GL2} shows that when $X$ is abelian we should expect the automorphisms of $X[n]$ to behave less nicely from our point of view.\\
Furthermore, by \cite{HMX} if $X$ is of general type then its group of birational automorphisms $\Bir(X)$ and a fortiori $\Aut(X)$ are finite. Therefore, we may expect the subgroup $\Aut_{\Delta}(X^n)\subseteq \Aut(X^n)$ in Proposition \ref{druel2} of automorphisms stabilizing the union of all the diagonals of codimension greater than one in $X^n$ to be just $S_n\times \Aut(X)$. This leads us to the following conjecture. 
\begin{Conjecture}\label{conj}
Let $X$ be a smooth projective variety of general type. If $n\neq 2$ then 
$$\Aut(X[n])\cong S_{n}\times \Aut(X)$$
\end{Conjecture}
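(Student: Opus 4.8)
The plan is to follow the pattern of Propositions~\ref{druel2} and~\ref{druel1}: reduce the determination of $\Aut(X[n])$ to that of the subgroup $\Aut_{\Delta}(X^n)\subseteq\Aut(X^n)$ of automorphisms stabilizing the union of the diagonals of codimension greater than one, and then compute $\Aut_{\Delta}(X^n)$ directly. The starting observation is that, by \cite{HMX}, a variety of general type has finite automorphism group; hence $\Aut^{o}(X)$ is trivial and, by Proposition~\ref{connFM}, so is $\Aut^{o}(X[n])$. Thus $\Aut(X[n])$ is a finite group and the whole question is discrete. This finiteness is what should replace, in the general type setting, the nefness of $K_X$ used in Proposition~\ref{druel2}.

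The first step is the descent $\Aut(X[n])\cong\Aut_{\Delta}(X^n)$, that is, the statement that every $\phi\in\Aut(X[n])$ preserves the exceptional locus $\bigcup_{2\le|S|\le n}E_S$ of the blow-down $g_n\colon X[n]\to X^n$ and so induces an automorphism of $X^n$ stabilizing the diagonals. For $K_X$ nef this is exactly Proposition~\ref{druel2}, proved by showing that $K_{X[n]}\cdot\phi(C)\ge 0$ for a general $g_n$-contracted rational curve $C$, contradicting $K_{X[n]}\cdot C<0$. When $K_X$ is merely big the inequality $K_{X^n}\cdot g_{n*}\phi(C)\ge 0$ can fail, so I would try two routes: either replace nefness by Kodaira's lemma, writing $K_X\sim_{\mathbb{Q}}A+E$ with $A$ ample and $E$ effective and bounding the relevant intersection numbers on the locus where $g_{n*}\phi(C)$ can be $K$-negative; or pass to the canonical model. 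Since $\kappa$ is a birational invariant, both $X^n$ and $X[n]$ are of general type, the canonical model of $X[n]$ is the product $(X_{\mathrm{can}})^{n}$ of $n$ copies of the canonical model $X_{\mathrm{can}}$ of $X$, and the finite group $\Aut(X[n])$ acts on it through the canonical ring; one then tries to recover the stabilization of the $E_S$ by comparing the two contractions $X[n]\to X^n$ and $X[n]\dashrightarrow (X_{\mathrm{can}})^n$.

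Granting the descent, it remains to compute $\Aut_{\Delta}(X^n)$. The engine is a de Franchis-type rigidity generalizing Lemma~\ref{fibg2}: if $X\cong Y_1^{m_1}\times\cdots\times Y_s^{m_s}$ is the (essentially unique) decomposition of $X$ into indecomposable general type factors, then every automorphism of $X^n$ permutes the fibers of the projections onto the $Y_j$-factors, so that $\Aut(X^n)$ is the wreath-type group built from the groups $\Aut(Y_j)$ and symmetric groups, exactly as in Lemma~\ref{autcp}. I would then impose stabilization of all the diagonals $\Delta_S$, $2\le|S|\le n$. The decisive combinatorial point, valid precisely for $n\neq 2$, is that a permutation of $\{1,\dots,n\}$ with $n\ge 3$ is determined by its action on $2$-element subsets: stabilizing the pairwise diagonals $\Delta_{i,j}$ forces the permutation parts on the different factors to synchronize into a single diagonal $S_n$, and stabilizing the codimension $\ge 2$ diagonals then forces the factor-automorphisms to be equal across the $n$ copies. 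This produces the diagonal action and yields $\Aut_{\Delta}(X^n)\cong S_n\times\Aut(X)$. For $n=2$ there is a single diagonal, the pair condition is vacuous, the permutation parts on distinct factors remain independent, and one recovers the larger groups of Propositions~\ref{druel2} and~\ref{druel1}; this is why the conjecture excludes $n=2$.

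I expect the main obstacle to be the descent in the non-nef case. A general type variety may contain rational curves, so the clean argument of Lemma~\ref{noratcurves} is unavailable, and the curve $g_{n*}\phi(C)$ can genuinely be $K_{X^n}$-negative; controlling it via Kodaira's lemma, or reconciling the action on the canonical model through the merely rational map $X[n]\dashrightarrow(X_{\mathrm{can}})^n$ with the regular structures on $X^n$ and $X[n]$, is the crux. A secondary difficulty is the rigidity input itself: proving that $\Aut(X^n)$ is monomial requires a de Franchis theorem for dominant morphisms between higher-dimensional general type varieties together with the uniqueness of the decomposition into indecomposable factors, both of which are immediate for curves but substantially harder in general.
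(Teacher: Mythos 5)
First, note that the statement you are trying to prove is Conjecture~\ref{conj}: the paper offers no proof of it, only the heuristic motivation that $\Aut(X)$ is finite by \cite{HMX} and that the pattern of Propositions~\ref{druel2} and~\ref{druel1} ``should'' persist. Your proposal follows exactly that motivation, and you are candid that it is a plan rather than a proof; unfortunately the two places where you hedge are precisely the places where the conjecture is genuinely open, so the proposal does not establish the statement.

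The first and decisive gap is the descent $\Aut(X[n])\cong\Aut_{\Delta}(X^n)$. The proof of Proposition~\ref{druel2} is not a formal computation that one can perturb: it needs $K_{X^n}$ nef exactly so that $g_n^{*}K_{X^n}\cdot\phi(C)=K_{X^n}\cdot g_{n*}\phi(C)\geq 0$ for a $g_n$-contracted rational curve $C$, which then contradicts $K_{X[n]}\cdot C<0$. For $X$ of general type with $K_X$ not nef (say, the blow-up of a minimal surface of general type at a point), $X$ contains rational curves on which $K_X$ is negative, so $g_{n*}\phi(C)$ can a priori be a $K_{X^n}$-negative curve and the inequality fails; Lemma~\ref{noratcurves} is likewise unavailable. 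Your Kodaira-lemma route does not repair this: writing $K_X\sim_{\mathbb{Q}}A+E$ with $A$ ample and $E$ effective gives no control on curves contained in $\operatorname{Supp}(E)$, which is exactly where $\phi(C)$ could land. The canonical-model route has a different defect: an automorphism of $X[n]$ does act on the canonical model, and indeed the canonical model of $X[n]$ is $(X_{\mathrm{can}})^{n}$, but the map $X[n]\dashrightarrow(X_{\mathrm{can}})^{n}$ contracts much more than the $E_S$ (it contracts everything lying over the non-ample locus of $K_X$), so compatibility of $\phi$ with this rational map does not force $\phi$ to preserve the exceptional locus of $g_n$, which is what $\Aut(X[n])\cong\Aut_{\Delta}(X^n)$ requires. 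The second gap is independent: even granting descent, your computation of $\Aut_{\Delta}(X^n)$ invokes a higher-dimensional de~Franchis-type rigidity (to show $\Aut(X^n)$ is built from $S_n$ and the factor automorphisms, as in Lemma~\ref{autcp}) together with uniqueness of the decomposition of $X$ into indecomposable factors; neither is proved in the paper nor supplied by you, and Lemma~\ref{fibg2} covers only curves. The purely combinatorial step you describe --- that for $n\neq 2$ stabilizing all diagonals synchronizes the permutations into a single diagonal $S_n$ and equalizes the factor automorphisms --- is correct and is the same point used in Proposition~\ref{druel1}, but it is the only part of the chain that is actually secured. In short: your outline is a reasonable research program, consistent with the paper's own motivation, but both of its pillars are unproven, so the conjecture remains exactly that.
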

Note that when $X = C$ is a curve this is just Proposition \ref{druel2}, and more generally when $X = C_1\times ...\times C_r$ is a product of curves with $g(C_i)\geq 2$ for any $i$ this is Proposition \ref{druel1}. On the other hand, the second part of Proposition \ref{druel1} tells us that additional symmetries are allowed when $n = 2$.

\bibliographystyle{amsalpha}
\bibliography{Biblio}

\providecommand{\bysame}{\leavevmode\hbox to3em{\hrulefill}\thinspace}
\providecommand{\MR}{\relax\ifhmode\unskip\space\fi MR }
\providecommand{\MRhref}[2]{%
  \href{http://www.ams.org/mathscinet-getitem?mr=#1}{#2}
}
\providecommand{\href}[2]{#2}
\begin{thebibliography}{GKM02}

\bibitem[AP16]{AP16}
A.~Abreu and M.~Pacini, \emph{The automorphism group of ${M}_{0,n}^{trop}$ and
  $\overline{M}_{0,n}^{trop}$}, https://arxiv.org/abs/1602.01415v1, 2016.

\bibitem[BGM13]{BGM}
I.~Biswas, T.~L. G\'omez, and V.~Munoz, \emph{Automorphisms of moduli spaces of
  vector bundles over a curve}, Expo. Math. \textbf{31} (2013), no.~1, 73--86.
  \MR{3035121}

\bibitem[Bla56]{Bl}
A.~Blanchard, \emph{Sur les vari\'et\'es analytiques complexes}, Ann. Sci.
  Ecole Norm. Sup. (3) \textbf{73} (1956), 157--202. \MR{0087184}

\bibitem[BM13]{BM}
A.~Bruno and M.~Mella, \emph{The automorphism group of {$\overline{M}_{0,n}$}},
  J. Eur. Math. Soc. (JEMS) \textbf{15} (2013), no.~3, 949--968. \MR{3085097}

\bibitem[BM16]{BM16}
I.~Biswas and S.~Mehrotra, \emph{Automorphisms of the generalized quot
  schemes}, Adv. Theor. Math. Phys. \textbf{20} (2016), no.~6, 1473--1484.
  \MR{3607062}

\bibitem[Bri11]{Br}
M.~Brion, \emph{On automorphism groups of fiber bundles}, Publ. Mat. Urug.
  \textbf{12} (2011), 39--66. \MR{3012239}

\bibitem[Cat00]{Ca00}
F.~Catanese, \emph{Fibred surfaces, varieties isogenous to a product and
  related moduli spaces}, Amer. J. Math. \textbf{122} (2000), no.~1, 1--44.
  \MR{1737256}

\bibitem[CHS09]{CHS}
I.~Coskun, J.~Harris, and J.~Starr, \emph{The ample cone of the {K}ontsevich
  moduli space}, Canad. J. Math. \textbf{61} (2009), no.~1, 109--123.
  \MR{2488451}

\bibitem[Cow89]{Co}
M.~J. Cowen, \emph{Automorphisms of {G}rassmannians}, Proc. Amer. Math. Soc.
  \textbf{106} (1989), no.~1, 99--106. \MR{938909}

\bibitem[CP95]{DP}
C.~De Concini and C.~Procesi, \emph{Wonderful models of subspace arrangements},
  Selecta Math. (N.S.) \textbf{1} (1995), no.~3, 459--494. \MR{1366622}

\bibitem[CT15]{CT15}
A-M. Castravet and J.~Tevelev, \emph{{$\overline{M}_{0,n}$} is not a {M}ori
  dream space}, Duke Math. J. \textbf{164} (2015), no.~8, 1641--1667.
  \MR{3352043}

\bibitem[DPU16]{DPU}
O.~Domitrescu, E.~Postinghel, and S.~Urbinati, \emph{Cones of effective
  divisors on the blown-up $\mathbb{P}^3$ in general lines}, to appear in
  Rendiconti del Circolo Matematico di Palermo,
  https://arxiv.org/abs/1512.09005v2, 2016.

\bibitem[Far09]{Far}
G.~Farkas, \emph{The global geometry of the moduli space of curves}, Algebraic
  geometry---{S}eattle 2005. {P}art 1, Proc. Sympos. Pure Math., vol.~80, Amer.
  Math. Soc., Providence, RI, 2009, pp.~125--147. \MR{2483934}

\bibitem[FM94]{FM}
W.~Fulton and R.~MacPherson, \emph{A compactification of configuration spaces},
  Ann. of Math. (2) \textbf{139} (1994), no.~1, 183--225. \MR{1259368}

\bibitem[FM17]{FaM}
B.~Fantechi and A.~Massarenti, \emph{On the rigidity of moduli of curves in
  arbitrary characteristic}, Int. Math. Res. Not. IMRN (2017), no.~8,
  2431--2463. \MR{3658203}

\bibitem[FP97]{FP}
W.~Fulton and R.~Pandharipande, \emph{Notes on stable maps and quantum
  cohomology}, Algebraic geometry---{S}anta {C}ruz 1995, Proc. Sympos. Pure
  Math., vol.~62, Amer. Math. Soc., Providence, RI, 1997, pp.~45--96.
  \MR{1492534}

\bibitem[GKM02]{GKM}
A.~Gibney, S.~Keel, and I.~Morrison, \emph{Towards the ample cone of
  {$\overline M_{g,n}$}}, J. Amer. Math. Soc. \textbf{15} (2002), no.~2,
  273--294. \MR{1887636}

\bibitem[GR17]{GR17}
P.~Gallardo and E.~Routis, \emph{Wonderful compactifications of the moduli
  space of points in affine and projective space}, Eur. J. Math. \textbf{3}
  (2017), no.~3, 520--564. \MR{3687430}

\bibitem[Har77]{Har}
R.~Hartshorne, \emph{Algebraic geometry}, Springer-Verlag, New York-Heidelberg,
  1977, Graduate Texts in Mathematics, No. 52. \MR{0463157}

\bibitem[Has03]{Has}
B.~Hassett, \emph{Moduli spaces of weighted pointed stable curves}, Adv. Math.
  \textbf{173} (2003), no.~2, 316--352. \MR{1957831}

\bibitem[HK00]{HK}
Y.~Hu and S.~Keel, \emph{Mori dream spaces and {GIT}}, Michigan Math. J.
  \textbf{48} (2000), 331--348, Dedicated to William Fulton on the occasion of
  his 60th birthday. \MR{1786494}

\bibitem[HMX13]{HMX}
C.~D. Hacon, J.~McKernan, and C.~Xu, \emph{On the birational automorphisms of
  varieties of general type}, Ann. of Math. (2) \textbf{177} (2013), no.~3,
  1077--1111. \MR{3034294}

\bibitem[Kap93]{Ka}
M.~Kapranov, \emph{Veronese curves and {G}rothendieck-{K}nudsen moduli space
  {$\overline M_{0,n}$}}, J. Algebraic Geom. \textbf{2} (1993), no.~2,
  239--262. \MR{1203685}

\bibitem[KM98]{KM}
J.~Koll\'ar and S.~Mori, \emph{Birational geometry of algebraic varieties},
  Cambridge Tracts in Mathematics, vol. 134, Cambridge University Press,
  Cambridge, 1998, With the collaboration of C. H. Clemens and A. Corti,
  Translated from the 1998 Japanese original. \MR{1658959}

\bibitem[KM11]{KM11}
Y-H. Kiem and H-B. Moon, \emph{Moduli spaces of weighted pointed stable
  rational curves via {GIT}}, Osaka J. Math. \textbf{48} (2011), no.~4,
  1115--1140. \MR{2871297}

\bibitem[KM13]{KMc}
S.~Keel and J.~McKernan, \emph{Contractible extremal rays on {$\overline
  M_{0,n}$}}, Handbook of moduli. {V}ol. {II}, Adv. Lect. Math. (ALM), vol.~25,
  Int. Press, Somerville, MA, 2013, pp.~115--130. \MR{3184175}

\bibitem[Kon95]{Ko}
M.~Kontsevich, \emph{Enumeration of rational curves via torus actions}, The
  moduli space of curves ({T}exel {I}sland, 1994), Progr. Math., vol. 129,
  Birkh\"auser Boston, Boston, MA, 1995, pp.~335--368. \MR{1363062}

\bibitem[KP01]{KP}
B.~Kim and R.~Pandharipande, \emph{The connectedness of the moduli space of
  maps to homogeneous spaces}, Symplectic geometry and mirror symmetry
  ({S}eoul, 2000), World Sci. Publ., River Edge, NJ, 2001, pp.~187--201.
  \MR{1882330}

\bibitem[Li09]{Li}
L.~Li, \emph{Wonderful compactification of an arrangement of subvarieties},
  Michigan Math. J. \textbf{58} (2009), no.~2, 535--563. \MR{2595553}

\bibitem[Lin04]{Lin04}
V.~Lin, \emph{Configuration spaces of $\mathbb{C}$ and $\mathbb{CP}^1$: some
  analytic properties}, https://arxiv.org/abs/math/0403120v3, 2004.

\bibitem[Lin11]{Lin11}
\bysame, \emph{Algebraic functions, configuration spaces, {T}eichm\"uller
  spaces, and new holomorphically combinatorial invariants}, Funktsional. Anal.
  i Prilozhen. \textbf{45} (2011), no.~3, 55--78. \MR{2883239}

\bibitem[Mas14]{Ma}
A.~Massarenti, \emph{The automorphism group of {$\overline M_g,_n$}}, J. Lond.
  Math. Soc. (2) \textbf{89} (2014), no.~1, 131--150. \MR{3174737}

\bibitem[MM17]{MaM17}
A.~Massarenti and M.~Mella, \emph{On the automorphisms of hassett's moduli
  spaces}, Transactions of the American Mathematical Society, DOI:
  https://doi.org/10.1090/tran/6966, 2017.

\bibitem[MM82]{MM}
S.~Mori and S.~Mukai, \emph{Classification of {F}ano {$3$}-folds with
  {$B_{2}\geq 2$}}, Manuscripta Math. \textbf{36} (1981/82), no.~2, 147--162.
  \MR{641971}

\bibitem[MP98]{MP}
R.~MacPherson and C.~Procesi, \emph{Making conical compactifications
  wonderful}, Selecta Math. (N.S.) \textbf{4} (1998), no.~1, 125--139.
  \MR{1623714}

\bibitem[Oka16]{Ok16}
S.~Okawa, \emph{On images of {M}ori dream spaces}, Math. Ann. \textbf{364}
  (2016), no.~3-4, 1315--1342. \MR{3466868}

\bibitem[Pan99]{Pa99}
R.~Pandharipande, \emph{Intersections of {$\bold Q$}-divisors on {K}ontsevich's
  moduli space {$\overline M_{0,n}(\bold P^r,d)$} and enumerative geometry},
  Trans. Amer. Math. Soc. \textbf{351} (1999), no.~4, 1481--1505. \MR{1407707}

\bibitem[Roy71]{Ro71}
H.~L. Royden, \emph{Automorphisms and isometries of {T}eichm\"uller space},
  Advances in the {T}heory of {R}iemann {S}urfaces ({P}roc. {C}onf., {S}tony
  {B}rook, {N}.{Y}., 1969), Ann. of Math. Studies, No. 66. Princeton Univ.
  Press, Princeton, N.J., 1971, pp.~369--383. \MR{0288254}

\end{thebibliography}
\end{document}